\newcommand{\EE}{{\mathbb E}}
\newcommand{\NN}{{\mathbb N}}
\newcommand{\PP}{{\mathbb P}}
\newcommand{\QQ}{{\mathbb Q}}
\newcommand{\RR}{{\mathbb R}}
\newcommand{\ZZ}{{\mathbb Z}}
\newcommand{\I}{\mathrm I}
\newcommand{\II}{\mathrm{II}}
\newcommand{\III}{\mathrm{III}}
\newcommand{\abs}[1]{ \left| #1 \right|}
\newcommand{\del}{\partial}
\newcommand{\ip}[1]{\langle #1 \rangle}
\newcommand{\nv}{^{-1}}
\newcommand{\nor}[2]{\left\|#1\right\|_{#2}}
\newcommand{\oline}[1]{\overline{#1}}
\newcommand{\oo}{\infty}
\newcommand{\pars}[1]{\left(#1\right)}
\newcommand{\sgn}{\mathrm{sgn}}
\DeclareMathOperator*{\esssup}{ess\,sup}
\DeclareMathOperator*{\essinf}{ess\,inf}
\newcommand{\mcl}{\mathcal}
\newcommand{\mbb}{\mathbb}
\newcommand{\mbf}{\mathbf}
\newtheorem{lemma}{Lemma}
\newtheorem{proposition}{Proposition}
\newtheorem{theorem}{Theorem}
\newdefinition{definition}{Definition}
\newproof{proof}{Proof}
\numberwithin{equation}{section}
\numberwithin{lemma}{section}
\numberwithin{proposition}{section}
\numberwithin{theorem}{section}
\numberwithin{corollary}{section}
\numberwithin{definition}{section}
\begin{document}

\title{Homogenization of pathwise Hamilton-Jacobi equations}
\author{Benjamin Seeger}

\begin{abstract}
We present qualitative and quantitative homogenization results for pathwise Hamilton-Jacobi equations with ``rough'' multiplicative driving signals.  When there is only one such signal and the Hamiltonian is convex, we show that the equation, as well as equations with smooth approximating paths, homogenize. In the multi-signal setting, we demonstrate that blow-up or homogenization may take place. The paper also includes a new well-posedness result, which gives explicit estimates for the continuity of the solution map and the equicontinuity of solutions in the spatial variable.
\end{abstract}

\maketitle

\section{Introduction} \label{S:intro}

We study the asymptotic behavior, as $\epsilon \to 0$, of pathwise Hamilton-Jacobi equations driven by a ``rough'' continuous signal $W : [0,\oo) \to \RR^M$,
\begin{equation}
		du^\epsilon + \sum_{i=1}^M H^i(Du^\epsilon, x/\epsilon) \cdot dW^i = 0  \quad \text{in } \RR^d \times (0,\oo), \qquad u^\epsilon(\cdot,0) = u_0 \quad \text{on } \RR^d. \label{E:maineq}
\end{equation}
The initial condition $u_0$ belongs to $BUC(\RR^d)$, the space of bounded, uniformly continuous functions on $\RR^d$. We expect homogenization to occur if the dependence of $H^i$ in the spatial variable $y = x/\epsilon$ has some sort of self-averaging property. In this paper, this will be periodicity or stationary ergodicity.

The interpretation of \eqref{E:maineq} is determined by the regularity of $W$. For example, if $W$ is differentiable, then $dW$ and $du^\epsilon$ represent respectively the time derivatives of $u^\epsilon$ and $W$, which we denote by $u^\epsilon_t = u^\epsilon_t(x,t)$ and $\dot W = \dot W_t = \dot W(t)$. In this case, $u^\epsilon$ is defined in the usual Crandall-Lions viscosity sense, the theory for which is outlined in the User's Guide by Crandall, Ishii, and Lions \cite{CIL}. If $W$ has bounded variation, then \eqref{E:maineq} falls within the scope of equations with $L^1$-time dependence considered by Lions and Perthame \cite{LP} and Ishii \cite{I}. In either setting, the symbol $\cdot$ stands for multiplication.

Here, we allow $W$ to be any continuous signal. The typical examples are sample paths of continuous stochastic processes, such as Brownian motion, in which case $W$ is nowhere differentiable and has unbounded variation on every interval. For such $W$, the symbol $\cdot$ should be thought of as the Stratonovich differential. 

The theory for \eqref{E:maineq} in this generality was developed by Lions and Souganidis in \cite{LS1}, \cite{LS2}, \cite{LS3}, and \cite{LS4}, and is discussed in more detail in the forthcoming book \cite{LS5}. Proving well-posedness is more challenging than in the classical viscosity setting, especially for spatially dependent $H^i$. In general, strong regularity is required for the Hamiltonians, and \eqref{E:maineq} is only well-posed for $W$ in certain H\"older spaces.

In the single path case $M = 1$, if the Hamiltonian is uniformly convex, one can weaken the regularity assumptions and prove well-posedness for any continuous $W$. This is discussed by Lions and Souganidis in a forthcoming work \cite{fLS}, and a specific example is considered by Friz, Gassiat, Lions, and Souganidis in \cite{FGLS}. 

We briefly outline the results proved in this paper, giving the precise statements later. The various assumptions, including the homogenization rate assumption \eqref{A:Hhomog} that we reference below, are listed in Section \ref{S:assumptions}.

We first study the single path setting,
\begin{equation}
		du^\epsilon + H(Du^\epsilon, x/\epsilon) \cdot dW = 0 \quad \text{in } \RR^d \times (0,\oo), \qquad u^\epsilon(\cdot,0) = u_0 \quad \text{on } \RR^d. \label{E:singlepath}
\end{equation}
Following \cite{fLS}, we prove the following new well-posedness result.

\begin{theorem}\label{IT:extension}
	Assume that $H$ is smooth, uniformly convex, and, for some $q' > 1$, positively homogenous of degree $q'$. Then, for all $\epsilon > 0$, $u_0 \in BUC(\RR^d)$, and $W \in C([0,\oo),\RR)$, \eqref{E:singlepath} admits a unique pathwise viscosity solution in the sense of Lions and Souganidis. Moreover, the solution operator for \eqref{E:singlepath} is uniformly continuous in $W$, and the modulus of continuity for $u^\epsilon(\cdot,t)$ depends only on the growth of $H$.
\end{theorem}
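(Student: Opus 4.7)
My plan is to obtain the solution for rough $W$ as a uniform limit of classical viscosity solutions $u^{\epsilon,n}$ of \eqref{E:singlepath} associated to smooth $W^n \to W$; for smooth $W^n$, the equation falls under the Crandall-Lions theory with $L^1$ time dependence as developed in \cite{LP} and \cite{I}. The argument then reduces to two a priori estimates, uniform in $n$: first, a spatial modulus of continuity for $u^{\epsilon,n}(\cdot,t)$ depending only on the growth of $H$, on $u_0$, and on $\|W^n\|_{L^\infty([0,t])}$, but \emph{not} on $\mathrm{Var}(W^n)$; second, uniform continuity of the map $W^n \mapsto u^{\epsilon,n}$ in the uniform topology. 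Given these, the family $\{u^{\epsilon,n}\}$ is Cauchy in $C(\RR^d\times[0,T])$ for each $T>0$, its limit $u^\epsilon$ satisfies the Lions-Souganidis pathwise viscosity definition by construction, and uniqueness is immediate from the continuity estimate applied to any two approximating sequences.

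\textbf{Spatial modulus estimate.} This is the heart of the proof. Since $H$ is uniformly convex and positively $q'$-homogeneous, the Legendre dual $L=H^*$ is uniformly convex and $q$-homogeneous with $q = q'/(q'-1)$, and one has Hopf-Lax-type comparison with inf- and sup-convolutions. On each interval where $W^n$ is monotone, the time change $\tau = W^n(\cdot) - W^n(s)$ reduces the equation to a standard forward HJ equation, with convex Hamiltonian $H$ on increasing pieces and with $-H$ on decreasing pieces, and $u^{\epsilon,n}(\cdot,s')$ is an inf- or sup-convolution of $u^{\epsilon,n}(\cdot,s)$ against a kernel of the form $\tau L(\cdot/\tau)$. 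The decisive point, developed in \cite{FGLS} and \cite{fLS}, is that successive such convolutions with $q$-homogeneous kernels compose in a way whose net effect on the spatial modulus is controlled by $\max_{[0,t]} W^n - \min_{[0,t]} W^n$ rather than by the total variation of $W^n$. The spatial dependence $H(p,x/\epsilon)$ enters as a lower-order perturbation handled by the standard penalization and doubling techniques of viscosity theory.

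\textbf{Continuity in $W$ and passage to the limit.} For smooth $W^1, W^2$, standard viscosity comparison via doubling of variables, combined with the a priori spatial modulus from the previous step, yields an estimate of the form $\|u^{\epsilon,1}-u^{\epsilon,2}\|_\infty \leq \omega_W(\|W^1-W^2\|_\infty)$ for a modulus $\omega_W$ depending only on $H$, $u_0$, and $\max\{\|W^i\|_\infty\}$, giving uniform continuity on bounded sets of paths. For general $W \in C([0,\oo),\RR)$, choose smooth $W^n \to W$ uniformly on compacts; the associated $\{u^{\epsilon,n}\}$ is Cauchy and converges uniformly to some $u^\epsilon$, which satisfies the Lions-Souganidis pathwise viscosity definition (\cite{LS1}, \cite{LS2}, \cite{LS3}, \cite{LS4}) as a uniform limit of classical solutions, and which inherits the spatial modulus from Step 2. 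Uniqueness follows since any other pathwise viscosity solution is, by definition, such a limit and must therefore agree with $u^\epsilon$.

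\textbf{Main obstacle.} The genuinely new and delicate step is the spatial modulus estimate. A naive application of HJ regularization on each of the $O(\mathrm{Var}(W^n))$ monotone pieces of $W^n$ gives constants that compound and diverge as $W^n \to W$ rough. The combination of uniform convexity and positive homogeneity is precisely what allows the extremes of $W^n$, rather than its variation, to control the bound, and extracting this property at the level of a viscosity PDE with the oscillating spatial dependence $H(p,x/\epsilon)$ is where the real work lies.
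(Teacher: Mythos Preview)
Your high-level strategy (approximate $W$ by smooth $W^n$, prove uniform-in-$n$ estimates, pass to the limit) matches the paper's, but the core technical step is mishandled, and this is a genuine gap.

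The problem is your treatment of the spatial dependence. You write that on monotone pieces the solution is an inf- or sup-convolution against a kernel $\tau L(\cdot/\tau)$ with $L=H^*$, and that the $x/\epsilon$-dependence is ``a lower-order perturbation handled by the standard penalization and doubling techniques.'' Both claims fail here. The Hopf--Lax convolution formula is only valid for spatially homogeneous $H$; with $H(p,x/\epsilon)$ one has a control representation but not a convolution, so the composition argument you allude to does not apply. More seriously, ``standard penalization'' with $\frac{\lambda}{2}|x-y|^2$ in the doubled equation produces the error $H(p,x)\dot W - H(p,y)\dot W$, which is of order $|\dot W|$; over the full interval this is controlled by the variation of $W^n$, not its oscillation, and blows up as $W^n\to W$ rough. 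This is exactly the obstruction you flag in your ``Main obstacle'' paragraph, but you have not actually proposed a mechanism to overcome it.

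The paper's resolution is different from anything in your sketch: it replaces the quadratic penalty by a bespoke \emph{distance function}
\[
L(x,y):=\inf_{\gamma\in\mathcal A(x,y)}\int_0^1 H^*(-\dot\gamma_s,\gamma_s)\,ds,
\]
which is an \emph{exact} viscosity solution of $H(D_xL,x)=H(-D_yL,y)=(q-1)L$ (Lemma~\ref{L:distancefunction}). Because of the homogeneity, $\Phi_\lambda(x,y,t)=\bigl(\lambda/(1-\lambda(\xi_t-\zeta_t))\bigr)^{q-1}L(x,y)$ is then an exact solution of the doubled equation, so the error term vanishes identically rather than being ``lower order.'' The estimate \eqref{E:extension} comes out with $\|W^1-W^2\|_\infty$ appearing through this factor, and \eqref{E:equicontinuity} follows by taking $\xi=\zeta$. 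The regularity of $L$ (semiconcavity, $C^1$ near the diagonal) is what makes the comparison argument go through. Your Legendre dual $H^*$ is an ingredient in this construction, but it is not the test function itself.

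Finally, your uniqueness argument is circular: the Lions--Souganidis definition (Definition~\ref{D:solution}) is intrinsic, via smooth local test functions, not ``by definition a limit of classical solutions.'' Uniqueness requires an honest comparison principle for pathwise sub- and super-solutions (Proposition~\ref{P:comparison}), which again rests on the distance function $L$.
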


Using Theorem \ref{IT:extension}, we prove that, as $\epsilon \to 0$, $u^\epsilon$ converges to the unique solution of a homogenized equation of the form
\begin{equation}
		du + \oline{H}(Du) \cdot dW = 0 \quad \text{in } \RR^d \times (0,\oo), \qquad u(\cdot,0) = u_0 \quad \text{on } \RR^d. \label{E:homogeq}
\end{equation}

\begin{theorem} \label{IT:homogenization}
	In addition to the hypotheses of Theorem \ref{IT:extension}, assume \eqref{A:Hhomog}. Then there exists $\oline{H}: \RR^d \to \RR$ such that, for all $u_0 \in BUC(\RR^d)$ and $W \in C([0,\oo),\RR)$, the solution $u^\epsilon$ of \eqref{E:singlepath} converges locally uniformly, as $\epsilon \to 0$, to the solution $u$ of \eqref{E:homogeq}. Moreover, the convergence is uniform over all $u_0$ with uniformly bounded Lipschitz constant.
\end{theorem}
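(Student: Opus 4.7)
The plan is to (i) construct the effective Hamiltonian $\oline{H}$ via the standard spatial cell problem, (ii) prove homogenization for smooth driving signals using classical viscosity methods, and (iii) lift the result to general continuous $W$ using the uniform continuity of the solution map in $W$ provided by Theorem \ref{IT:extension}. Under \eqref{A:Hhomog}, the cell problem $H(p+Dv,y)=\oline{H}(p)$ determines a unique $\oline{H}(p)$ admitting a periodic or stationary sublinear corrector $v$, by the classical results of Lions-Papanicolaou-Varadhan in the periodic case and of Souganidis and Rezakhanlou-Tarver in the stationary ergodic case. Convexity of $\oline{H}$ follows from the standard variational formula, and positive $q'$-homogeneity from the scaling invariance of the corrector equation under $p\mapsto\lambda p$. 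Consequently $\oline{H}$ satisfies the hypotheses of Theorem \ref{IT:extension}, so the homogenized equation \eqref{E:homogeq} admits a unique pathwise viscosity solution $u$ for every $W\in C([0,\oo),\RR)$, with the same uniform continuity in $W$ and spatial modulus of continuity as for \eqref{E:singlepath}.

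For $W\in C^1$, \eqref{E:singlepath} is the classical viscosity equation $u^\epsilon_t+\dot W(t)H(Du^\epsilon,x/\epsilon)=0$, to which I would apply the perturbed test function method of Evans. Given a test function $\phi$ touching $\oline{u}=\limsup{}^*u^\epsilon$ from above at $(x_0,t_0)$, inserting the $\epsilon$-scaled corrector $\epsilon v(x/\epsilon;\,D\phi(x_0,t_0))$ into the prelimit viscosity inequality yields, after invoking the cell equation, the inequality $\oline{u}_t+\dot W(t)\oline{H}(D\oline{u})\le 0$ at $(x_0,t_0)$ in the viscosity sense; the scalar factor $\dot W(t_0)$ merely multiplies both sides of the cell problem, and at its zeros the inequality is automatic. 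An analogous lower bound for $\uline{u}=\liminf{}_*u^\epsilon$, together with the comparison principle for the homogenized equation, identifies both half-relaxed limits with the unique solution $u$ of \eqref{E:homogeq} driven by $W$. The quantitative rate encoded in \eqref{A:Hhomog} gives a modulus of convergence depending on $u_0$ only through $\Lip(u_0)$.

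For general $W\in C([0,\oo),\RR)$, I approximate by smooth $W^n\to W$ uniformly on compacts, and let $u^\epsilon_n$ and $u_n$ denote the solutions of \eqref{E:singlepath} and \eqref{E:homogeq} driven by $W^n$. By Theorem \ref{IT:extension} applied to both equations, there is a modulus $\omega$ depending only on $\Lip(u_0)$ for which $\nor{u^\epsilon-u^\epsilon_n}{\infty}$ and $\nor{u-u_n}{\infty}$ are both bounded by $\omega(\nor{W-W^n}{\infty})$, uniformly in $\epsilon$. Given $\eta>0$, I first pick $n$ large so that both stability errors are below $\eta/3$, then $\epsilon$ small so that the smooth-path homogenization error $\nor{u^\epsilon_n-u_n}{\infty}$ is below $\eta/3$, yielding $u^\epsilon\to u$ locally uniformly with the required uniformity over $u_0$ of a given Lipschitz constant. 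The main obstacle is the perturbed test function argument in the smooth case: although the sign-changing prefactor $\dot W$ introduces no fundamental difficulty, one must verify that the quantitative rate of \eqref{A:Hhomog} survives the pointwise-in-$t$ application of the cell problem and that the resulting modulus depends on $W$ and $u_0$ only through quantities that remain controlled along the approximation $W^n\to W$.
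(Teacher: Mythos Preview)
Your overall architecture---approximate $W$ by regular paths, homogenize for those, and transfer to general $W$ via the uniform-in-$\epsilon$ stability of Theorem~\ref{IT:extension}---is exactly the paper's strategy. The differences lie in how the two intermediate steps are executed, and two of your steps have genuine gaps.

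First, you claim that $\oline{H}$ ``satisfies the hypotheses of Theorem~\ref{IT:extension}'' and invoke that theorem for the well-posedness of \eqref{E:homogeq}. In general $\oline{H}$ is only convex and continuous; the smoothness and uniform convexity required by \eqref{A:wellposedeq} need not survive homogenization. The paper instead uses Lemma~\ref{L:homogeqwellposed}, which says that for spatially independent Hamiltonians the much weaker condition ``difference of convex functions'' suffices, and this holds since $\oline{H}$ is convex (Lemma~\ref{L:convexconvex}). Relatedly, you assert that the sign-changing factor $\dot W$ ``merely multiplies both sides of the cell problem''. In the viscosity sense this is false: the corrector for $H$ is not a corrector for $-H$, and the effective Hamiltonian that emerges when $\dot W<0$ is $\oline{(-H)}$, not $-\oline{H}$. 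The identity $\oline{(-H)}=-\oline{H}$ is a separate fact requiring convexity (Lemma~\ref{L:convexconsistency}); without it the limit equation would not be \eqref{E:homogeq}.

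Second, the assumption \eqref{A:Hhomog} gives a rate only for the autonomous problems $U_t\pm H(DU,x/\epsilon)=0$, not for $u_t+\dot W^n(t)H(Du,x/\epsilon)=0$. Your perturbed test function argument is purely qualitative and does not deliver the uniformity in $u_0$ that the statement demands; you yourself flag this at the end. The paper resolves this by choosing the approximation to be the \emph{piecewise linear} interpolation $W^\eta$. On each interval $[k\eta,(k+1)\eta]$ the path is monotone, so after a time rescaling the $\epsilon$-equation reduces to one of the two autonomous problems to which \eqref{A:Hhomog} applies directly. Iterating the contraction property of $S^\epsilon_\pm$ over the $T/\eta$ intervals (Lemma~\ref{L:homogenize}) produces an explicit error $\lesssim_L \mathcal E(T,\eta)\epsilon^\beta$, which is then balanced against the stability error $\omega_{W,T}(\eta)$ from Lemma~\ref{L:regularize}. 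This semigroup iteration, not a perturbed test function, is the mechanism that converts \eqref{A:Hhomog} into a rate for the rough equation and yields the uniformity over Lipschitz $u_0$.
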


For more general Hamiltonians, we replace $W$ with a smooth signal $W^\epsilon$ that converges locally uniformly, as $\epsilon \to 0$, to $W$, and study the initial value problem
\begin{equation}
		u_t^\epsilon + H(Du^\epsilon, x/\epsilon)\dot{W}^\epsilon_t = 0 \quad \text{in } \RR^d \times (0,\oo), \qquad u^\epsilon(\cdot,0) = u_0 \quad \text{on } \RR^d. \label{E:singlepathmild}
\end{equation}
By imposing quantitative control on the increasing roughness of $W^\epsilon$ for small $\epsilon$, we are able to prove the following:

\begin{theorem} \label{IT:mildapproxhomog}
	Assume \eqref{A:Hhomog} and that $H$ is coercive, Lipschitz, and convex. Then there exists $\oline{H}: \RR^d \to \RR$ such that, for all $u_0 \in BUC(\RR^d)$ and $W \in C([0,\oo), \RR)$, and for a smooth approximating family $\{ W^\epsilon \}_{\epsilon > 0}$ of $W$, the solution $u^\epsilon$ of \eqref{E:singlepathmild} converges locally uniformly, as $\epsilon \to 0$, to the solution $u$ of \eqref{E:homogeq}.
\end{theorem}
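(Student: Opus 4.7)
The plan is to exploit the one-dimensional nature of the driving signal: on any interval $[t_k,t_{k+1}]$ where $W^\epsilon$ is monotone, \eqref{E:singlepathmild} reduces, after the monotone time change $r\mapsto \tau = |W^\epsilon_r - W^\epsilon_{t_k}|$, to a standard first-order Hamilton--Jacobi equation
$$v_\tau \pm H(Dv,x/\epsilon) = 0,$$
with the $+$ sign on increasing pieces and the $-$ sign on decreasing ones, posed over $\tau\in[0,\Delta_k]$ with $\Delta_k = |W^\epsilon_{t_{k+1}} - W^\epsilon_{t_k}|$. Since $H$ is coercive, Lipschitz, and convex, the classical Lions--Papanicolaou--Varadhan theory applies to each of these problems and yields the \emph{same} effective convex Hamiltonian $\oline{H}$: the cell problem for $-H(\cdot,y)$ is, via the substitution $\phi\mapsto -\phi$, the cell problem for $H(\cdot,y)$ at $-p$, so $-H$ homogenizes to $-\oline{H}$. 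Assumption \eqref{A:Hhomog} furthermore provides an explicit rate $\rho(\epsilon,\Delta_k)$ for the $L^\infty$ error over a piece of length $\Delta_k$, with dependence only on the Lipschitz constant of the incoming data, which by the equicontinuity part of Theorem~\ref{IT:extension} (or its classical analogue for this non-uniformly-convex setting) remains uniformly bounded across pieces.

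Splicing the per-piece limits across consecutive monotone intervals produces the pathwise Hamilton--Jacobi flow $du + \oline{H}(Du)\cdot dW = 0$: concatenating forward-in-$\tau$ convex and forward-in-$\tau$ concave flows for $\oline{H}$ across the reparametrization $t\mapsto W^\epsilon_t$ is, by definition, the pathwise solution with convex effective Hamiltonian $\oline{H}$, whose well-posedness follows from the Lions--Souganidis theory because $\oline{H}$ inherits convexity from $H$. This identifies the candidate limit.

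The main difficulty, and the only place where the assumptions beyond \eqref{A:Hhomog} enter, is the accumulation of errors. As $\epsilon\to 0$, both the number $N(\epsilon)$ of monotonicity intervals of $W^\epsilon$ and the total variation $\mathrm{TV}_{[0,T]}(W^\epsilon)$ generally blow up, because the limiting $W$ need not be of bounded variation. Summing the single-piece estimates yields a total error of order $\sum_k \rho(\epsilon,\Delta_k)$, which for a typical rate of the form $\rho(\epsilon,\Delta_k)\lesssim (1+\Delta_k)\omega(\epsilon)$ becomes $\omega(\epsilon)\bigl(N(\epsilon)+\mathrm{TV}_{[0,T]}(W^\epsilon)\bigr)$. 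The quantitative hypothesis on the smoothing scale of $W^\epsilon$ is designed precisely to force this to vanish, balancing the homogenization rate $\omega(\epsilon)$ against the roughness budget of $W^\epsilon$; establishing this compatibility and the uniform Lipschitz control on the iterates is the technical heart of the argument. Once it is in place, the uniform per-horizon convergence, combined with the stability of pathwise solutions of \eqref{E:homogeq} under locally uniform convergence $W^\epsilon\to W$, delivers the locally uniform convergence $u^\epsilon \to u$ claimed.
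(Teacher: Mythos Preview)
Your overall strategy---decomposing $[0,T]$ into the monotone pieces of $W^\epsilon$, applying the rate \eqref{A:Hhomog} on each, and balancing the accumulated error against the smoothing scale---is precisely the paper's approach (Lemmas~\ref{L:tildeuepsLip} and~\ref{L:smoothpathhomog}). Two steps, however, are not justified as written.

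First, the assertion that ``the cell problem for $-H(\cdot,y)$ is, via the substitution $\phi\mapsto -\phi$, the cell problem for $H(\cdot,y)$ at $-p$'' is false. Formally, if $-H(Dw+p,y)=\lambda$ and $\phi=-w$, you obtain $H(-D\phi+p,y)=-\lambda$, which is not the cell problem for $H$ at $-p$ (that would require $H(D\psi-p,y)=\overline{H}(-p)$, and $\psi=-\phi+2p\cdot y$ is not periodic or sublinear). More importantly, in the viscosity sense the substitution exchanges sub- and super-solutions, and the paper exhibits an explicit nonconvex, periodic $H$ for which $\overline{(-H)}\neq -\overline{H}$. The correct argument (Lemma~\ref{L:convexconsistency}) uses convexity and coercivity to replace the viscosity interpretation of $H(Dv+p,y)\le\nu$ by the almost-everywhere one, after which the inf--sup formulas \eqref{E:barHformula} are literal and the sign can be pulled through.

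Second, and more seriously, the uniform Lipschitz control you invoke does not follow from any ``classical analogue'' of Theorem~\ref{IT:extension}. That theorem relies on the homogeneous, uniformly convex structure \eqref{A:wellposedeq}, which produces a scale-invariant distance function; under mere coercivity, Lipschitz continuity, and convexity, the Lipschitz norm of $u^\epsilon(\cdot,t)$ can grow by a fixed multiplicative factor across each monotone piece, giving at best $\|Du^\epsilon(\cdot,T)\|_\infty\lesssim\exp(c_0 T/\eta(\epsilon))$ (Lemma~\ref{L:tildeuepsLip}). In the periodic case this is harmless: the contraction step (term $\mathrm{I}$ in the paper's decomposition) uses only the $L^\infty$ contraction of $S^\epsilon_\pm$, and the homogenization rate (term $\mathrm{II}$) is applied with incoming data $\overline{u}^{\eta}(\cdot,t_i)$, whose Lipschitz constant is uniformly bounded by spatial homogeneity of $\overline{H}$. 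In the random case, however, the contraction is only local, and the radius must be enlarged at each step by the propagation speed, which is controlled by $\|D_pH\|$ on the range of $Du^\epsilon$; the exponential blow-up therefore feeds directly into the error. This is why the paper must impose $|\log\epsilon|/\eta(\epsilon)\to 0$ in the random setting versus only $\epsilon^\beta/\eta(\epsilon)\to 0$ in the periodic one---a distinction your outline does not account for.
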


We then consider the multidimensional case $M > 1$, again using a smooth approximation $W^\epsilon$ of $W$. We assume that all but one of the Hamiltonians are independent of $Du^\epsilon$, so that, after relabeling, \eqref{E:maineq} becomes, for $H: \RR^d \times \RR^d \to \RR$ and $f = (f^1,f^2,\ldots,f^{M-1}): \RR^d \to \RR^{M-1}$,
\begin{equation}
	u^\epsilon_t + H(Du^\epsilon,x/\epsilon) \dot { W}^{M, \epsilon}_t + \sum_{i=1}^{M-1} f^i(x/\epsilon) \dot W^{i,\epsilon}_t = 0  \;\; \text{in } \RR^d \times (0,\oo), \quad u^\epsilon(\cdot,0) = u_0 \;\;  \text{on } \RR^d. \label{E:maineqbadcase}
\end{equation}

\begin{theorem} \label{IT:multpath}
	If $\dot W^{M,\epsilon} \equiv 1$, $H$ grows at least linearly in $Du^\epsilon$, and $(W^1,W^2,\ldots,W^{M-1})$ has unbounded variation, then, as $\epsilon \to 0$, $\lim_{\epsilon\to 0} u^\epsilon = -\oo$. On the other hand, if $u_0 = 0$, $M=2$, and $\int_0^t \sgn(\dot W^{2,\epsilon}_s) \abs{\dot W^1_s} ds$ converges in law as $\epsilon \to 0$, then so does $u^\epsilon(x,\cdot)$ for all $x \in \RR^d$.
\end{theorem}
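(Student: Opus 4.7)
The theorem contains two distinct claims, which I would address separately.

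For the blow-up statement (first part), the assumption $\dot W^{M,\epsilon}\equiv 1$ reduces \eqref{E:maineqbadcase} to a classical viscosity Hamilton-Jacobi equation with time- and space-dependent forcing $V^\epsilon(x,t) = \sum_{i=1}^{M-1} f^i(x/\epsilon)\dot W^{i,\epsilon}_t$. Under convexity of $H$, the Lax--Oleinik representation
\begin{equation*}
u^\epsilon(x,t) = \inf_\gamma \left\{ u_0(\gamma(0)) + \int_0^t L(\dot\gamma(s),\gamma(s)/\epsilon)\,ds + \int_0^t \sum_{i=1}^{M-1} f^i(\gamma(s)/\epsilon)\dot W^{i,\epsilon}_s\,ds \right\}
\end{equation*}
(infimum over curves $\gamma$ with $\gamma(t)=x$, with $L$ the Legendre conjugate of $H$ in the momentum variable) furnishes a pointwise upper bound for every admissible $\gamma$. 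I would construct a zig-zag trial trajectory $\gamma^\epsilon$ that, on each subinterval of constant $\sgn(\dot W^{i,\epsilon}_s)$, sits near a point $y^*_i$ of the fast cell where $f^i(y^*_i)\sgn(\dot W^{i,\epsilon}_s)$ is strongly negative. Sign changes of $\dot W^{i,\epsilon}$ require $\gamma^\epsilon$ to shift by $O(\epsilon)$, which is feasible at an $L$-cost of $O(1)$ per unit time using the at-least-linear growth of $H$ (which bounds the effective speed of admissible paths). Consequently the forcing integral is bounded above by $-c\sum_i \int_0^t |\dot W^{i,\epsilon}_s|\,ds + O(1)$, and the unbounded-variation hypothesis forces this quantity to diverge to $-\oo$.

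For the convergence in law when $M=2$ and $u_0=0$ (second part), the equation reads
\begin{equation*}
u^\epsilon_t + H(Du^\epsilon,x/\epsilon)\dot W^{2,\epsilon}_t + f(x/\epsilon)\dot W^{1,\epsilon}_t = 0.
\end{equation*}
The strategy is to express $u^\epsilon(x,t)$ as a continuous functional of $W^{1,\epsilon}$, $W^{2,\epsilon}$, and $J^\epsilon_t := \int_0^t \sgn(\dot W^{2,\epsilon}_s)|\dot W^{1,\epsilon}_s|\,ds$, and then invoke the continuous mapping theorem. I would decompose $[0,t]$ into maximal intervals of constant $\sgn(\dot W^{2,\epsilon})$: on an interval where $\dot W^{2,\epsilon}>0$ the equation is a forward HJ evolution whose optimal-path contribution from the $f$-forcing is of order $-|f|\,|\dot W^{1,\epsilon}|$ (by the same analysis as in Part 1); on an interval where $\dot W^{2,\epsilon}<0$, time reversal turns the equation into one in which the contribution has the opposite sign. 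Concatenating the pieces, the $W^{1,\epsilon}$-dependence collapses into $J^\epsilon$, and the joint continuity of the solution map provided by Theorem \ref{IT:extension} transfers convergence in law of $J^\epsilon$ to that of $u^\epsilon(x,\cdot)$.

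The principal difficulty is Part 2. Unlike Part 1, there is no global variational formula because the sign changes of $\dot W^{2,\epsilon}$ alternate between forward and backward propagation of the flow. Extracting exactly the integral $\int_0^t \sgn(\dot W^{2,\epsilon}_s)|\dot W^{1,\epsilon}_s|\,ds$, rather than a more complicated functional of the two signals, requires a delicate pairing argument; I expect this either to rely on an explicit scaling structure available only for positively homogeneous $H$ or to require a refined stability estimate that quantifies how the solution responds to perturbations of the forcing integral. A secondary concern in Part 1 is the uniform-in-$\epsilon$ bound on the $L$-cost of the zig-zag trajectory, which will require some nondegeneracy on the oscillation of the $f^i$.
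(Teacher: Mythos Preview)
Your Part 1 is essentially the paper's argument. The paper reduces to $H(p,y)=|p|$ by comparison, writes the control formula \eqref{E:controlformula}, and constructs a zig-zag trajectory at scale $\epsilon$ around a fixed base point $y_0$ where $Df(y_0)\ne 0$, switching direction with the sign of $Df(y_0)p\cdot\dot W^\eta_s$. The mechanism and the cost accounting are as you describe; the only refinement is that the paper oscillates in the direction of the \emph{gradient} of $f$ rather than hopping between extreme points, which makes the bookkeeping in the multi-component case clean (one direction $p$ and one base point $y_0$ serve all components simultaneously via $\xi=Df(y_0)p$).

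Part 2 has a genuine gap. The paper does \emph{not} express $u^\epsilon$ as a continuous functional of $(W^{1,\epsilon},W^{2,\epsilon},J^\epsilon)$ and invoke the continuous mapping theorem; in fact no such continuity statement is available, and Theorem~\ref{IT:extension} cannot supply it since that result concerns the single-path equation \eqref{E:singlepath} and says nothing about stability in the two-signal problem. Your time-reversal heuristic for intervals where $\dot W^{2,\epsilon}<0$ is also problematic: viscosity evolution is not reversible, and there is no reason the ``optimal-path contribution'' should simply flip sign. What the paper actually does is compute the effective constant for the frozen cell problem $H(D_yv,y)\xi_1+f(y)\xi_2=\lambda$ under the structural hypotheses $H\ge 0$, $H(0,\cdot)=0$, $\max f=-\min f=1$, obtaining the exact value $\lambda=\operatorname{sgn}(\xi_1)|\xi_2|$ (Lemma~\ref{L:Hbarandcorrectors}). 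This yields a quantitative periodic homogenization rate on each linear piece of the approximating paths, and iterating as in Lemma~\ref{L:globaltimeconv} gives the \emph{deterministic} uniform estimate $\|u^\epsilon-W^\epsilon\|_{\RR^d\times[0,T]}\lesssim T\epsilon/\eta(\epsilon)$, where $W^\epsilon_t=-\int_0^t\operatorname{sgn}(\dot W^{1,\epsilon}_s)|\dot W^{2,\epsilon}_s|\,ds$. Convergence in law then transfers by an elementary lemma (Lemma~\ref{L:weakconv}) because the two processes are uniformly close almost surely. The key idea you are missing is this explicit effective Hamiltonian and the resulting \emph{uniform} closeness; without it, the ``delicate pairing argument'' you anticipate never materializes.
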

These contrasting examples indicate that the analysis of the more general problem \eqref{E:maineq} is sensitive to the different levels of coercivity of the Hamiltonians, as well as the correlation between the various components of the driving signal. 

The paper is organized as follows. In Section \ref{S:strategy}, we discuss the difficulties that arise in applying the classical homogenization theory to \eqref{E:maineq} and the strategies we use to overcome them. In Sections \ref{S:assumptions} and \ref{S:prelim}, we list the assumptions and some preliminary results that are used throughout the paper. In Sections \ref{S:onepath} and \ref{S:oneepspath}, we study the behavior of respectively equations \eqref{E:singlepath} and \eqref{E:singlepathmild}, and in Sections \ref{S:multiplepathblowup} and \ref{S:multiplepathconvergence}, we investigate the multiple path case \eqref{E:maineqbadcase}. Finally, in the Appendix, we state and prove the well-posedness results.

{\bf Notation.} For open $\mcl O \subset \RR^N$, $UC(\mcl O)$ ($BUC(\mcl O)$) denotes the set of (bounded) uniformly continuous functions on $\mcl O$. We write $\omega_f$ for the modulus of continuity of $f \in UC(\mcl O)$ and $\nor{f}{\mcl O} := \sup_{x \in \mcl O} |f(x)|$ for bounded $f$. We sometimes use $\nor{f}{\oo} = \nor{f}{\mcl O}$ when there is no ambiguity for $\mcl O$. If $W: [0,\oo) \to \RR$ is a continuous path, $\omega_{W,T} = \omega_W$ is the modulus of continuity of $W$ on the interval $[0,T]$, and the dependence on $T$ is suppressed where it does not cause confusion. 

We denote by $C^{0,1}(\mcl O)$ the space of bounded, Lipschitz continuous functions on $\mcl O$, and for $f \in C^{0,1}(\mcl O)$, $\nor{Df}{\oo}$ is the Lipschitz constant of $f$. For $0 < \alpha < 1$ and $0 < T \le \oo$, $C^\alpha([0,T])$ is the space of $\alpha$-H\"older continuous functions on $[0,T]$, and $[W]_{\alpha,T}$ is the $\alpha$-seminorm on $[0,T]$. Set $\NN_0 := \NN \cup \{0\}$. For $k \in \NN_0$, $C^k(\mcl O)$ ($C^k_b(\mcl O)$) are the usual spaces of functions possessing (bounded) derivatives up to order $k$. If $q \in [1,\oo]$, $q'$ denotes the dual exponent given by $\frac{1}{q} + \frac{1}{q'} = 1$, and $W^{1,q}$ and $W^{1,q}_0$ are the usual Sobolev spaces. Throughout the paper, we omit the domain or range of function spaces whenever it is clear from the context. 

The Legendre transform $G^*$ of a convex $G: \RR^d \to \RR$ is given by $G^*(p) := \sup_{z \in \RR^d} \pars{ p \cdot z - G(z) }$. For $R \ge 0$, $B_R := \left\{ x \in \RR^d : |x| \le R \right\}$ and $\Delta_R := \left\{ (x,y) \in \RR^d \times \RR^d : |x-y| \le R \right\}$. The signum of $\xi \in \RR$ is denoted by $\sgn(\xi)$. For $r \in \NN$, $I_r$ is the $r$-by-$r$ identity matrix.

For $A, B, D \in \RR$, $A \lesssim_D B$ means that there exists $C = C(D) > 0$ such that $A \le CB$. If $A \lesssim_D B$ and $A \gtrsim_D B$, then we say $A \approx_D B$. If the dependence of $C$ on $D$ is not important, we write $A \lesssim B$ or $A \approx B$.

\section{The difficulties and general strategy} \label{S:strategy} 

We first make the formal assumption that, for some $u$ solving a homogenized equation of the form $du + \oline{H}(Du,t) = 0$, the solution $u^\epsilon$ of \eqref{E:maineq} has the expansion
\[
	u^\epsilon(x,t) \approx u(x,t) + \epsilon v(x/\epsilon,t).
\]
An asymptotic analysis yields that, for fixed $p \in \RR^d$ and $t > 0$ (here, $p = Du(x,t)$ and $y = \frac{x}{\epsilon}$), $v$ should solve
\begin{equation}
	\sum_{i=1}^M H^i(D_y v + p, y) \cdot dW^i(t) = \oline{H}(p,t) \quad \text{in } \RR^d. \label{E:badcellproblem}
\end{equation}
In the classical homogenization theory, this equation, known as the cell problem or the macroscopic equation, admits appropriate solutions $v$, called correctors, for at most one value $\oline{H}(p,t)$. This uniquely determines the effective Hamiltonian, while the correctors, or certain approximations, play a central role in the proof of homogenization. See Lions, Papanicolaou, Varadhan \cite{LPV} or Evans \cite{E} for a more detailed discussion.

Due to the lack of regularity of $W$, \eqref{E:badcellproblem} has no meaning. We approximate $W$ by a $C^1$-path $W^\eta$ and consider the problem
\begin{equation}
		u^{\epsilon,\eta}_t + \sum_{i=1}^M H^i(Du^{\epsilon,\eta},x/\epsilon) \dot W^{i,\eta}_t = 0 \quad \text{in } \RR^d \times (0,\oo), \qquad u^{\epsilon,\eta}(\cdot,0) = u_0 \quad \text{on } \RR^d. \label{E:epsetaeq}
\end{equation}
We are then led to study, for fixed $p \in \RR^d$ and $\xi \in \RR^M$, the well-defined equation
\begin{equation}
	\sum_{i=1}^M H^i(D_y v + p,y)\xi^i = \oline{H}(p, \xi) \quad \text{in } \RR^d. \label{E:generalcellproblem}
\end{equation}
Formally, as $\epsilon \to 0$ for fixed $\eta$, $u^{\epsilon,\eta} \to \oline{u}^\eta$, where $\oline{u}^\eta$ solves
\begin{align}
	\oline{u}^\eta_t + \oline{H}(D \oline{u}^\eta, \dot W^\eta_t) = 0 \quad \text{in } \RR^d \times (0,\oo), \qquad \oline{u}^\eta(\cdot,0) = u_0 \quad \text{on } \RR^d. \label{E:generaletaeq}
\end{align}

The next step is to identify the limiting behavior, as $\eta \to 0$, of $\oline{u}^\eta$. This is impeded by the fact that the equation \eqref{E:generalcellproblem} for $v$ is nonlinear. In particular, we cannot expect that there exist $\oline{H}^1, \oline{H}^2, \ldots, \oline{H}^M: \RR^d \to \RR$ such that $\oline{H}$ takes the form
\begin{equation}
	\oline{H}(p,\xi) = \sum_{i=1}^M \oline{H}^i(p) \xi^i. \label{E:additivebarH}
\end{equation}
To illustrate this difficulty, we fix $p \in \RR^d$ and write $S(W^\eta)$ for the solution $\oline{u}^\eta$ of \eqref{E:generaletaeq} with initial condition $u_0(x) = p \cdot x$. If $H$ is continuous, then so is $\oline{H}$, and, hence, the solution operator $S: C^1([0,T], \RR^M) \to UC(\RR^d \times [0,T])$ is continuous in view of the comparison principle.

Now, for $\xi_1, \xi_2 \in \RR^M$, define the piecewise linear path $W^\eta$ by $W^\eta_0 = 0$ and, for $k \in \NN_0$,
\[
	\dot W^\eta_t :=
	\begin{cases}
		2\xi_1 & \text{if } t \in \pars{ 2k\eta, (2k+1) \eta }, \\
		2\xi_2 & \text{if } t \in \pars{ (2k+1)\eta, (2k+2)\eta }.
	\end{cases}
\]
As $\eta \to 0$, $W^\eta$ converges uniformly to $W_t := (\xi_1 + \xi_2)t$. Moreover, $\xi \mapsto \oline{H}(p,\xi)$ is positively $1$-homogenous, as can be seen from multiplying \eqref{E:generalcellproblem} by a positive constant, and so
\[	
	S(W^\eta)(x,t) = p \cdot x - \int_0^t \oline{H}(p, \dot W^\eta_s)\;ds
	\xrightarrow{\eta \to 0} p \cdot x - \pars{ \oline{H}(p,\xi_1) + \oline{H}(p, \xi_2)}t.
\]
On the other hand,
\[
	S(W)(x,t) = p \cdot x - \oline{H}(p,\xi_1 + \xi_2) t.
\]
Therefore, $S$ does not have a continuous extension to piecewise $C^1$-paths unless $\xi \mapsto \oline{H}(p,\xi)$ is additive. In view of the positive homogeneity, this is equivalent to \eqref{E:additivebarH}.

When $M = 1$, \eqref{E:additivebarH} holds if $\oline{(-H)} = - \oline{H}$ for the macroscopic problems
\begin{equation}
	H(D_y v_+ + p, y) = \oline{H}(p) \quad \text{and} \quad
	-H(D_y v_- + p, y) = \oline{(-H)}(p) \qquad \text{in } \RR^d. \label{E:signedcellproblem}
\end{equation}
As we demonstrate in Lemma \ref{L:convexconsistency}, this is satisfied whenever $p \mapsto H(p,y)$ is convex.

The inequality $\oline{(-H)} \ne -\oline{H}$ creates an even more striking obstruction to the convergence of $\oline{u}^\eta$. Namely, for any smooth $W^\eta$,
\[
	S(W^\eta)(x,t) = p \cdot x - \frac{ \oline{H}(p) - \oline{(-H)}(p)}{2} W^\eta_t - \frac{ \oline{H}(p) + \oline{(-H)}(p)}{2} \int_0^t \abs{ \dot W^\eta_s}\;ds.
\]
Therefore, if $W = \lim_{\eta \to 0} W^\eta$ has unbounded variation, then $\oline{u}^\eta$ may blow up in general, unless $\oline{(-H)} = - \oline{H}$.

When this equality holds, \eqref{E:generaletaeq} becomes
\begin{align}
	\oline{u}^\eta_t + \oline{H}(D \oline{u}^\eta) \dot W^\eta_t = 0 \quad \text{in } \RR^d \times (0,\oo), \qquad \oline{u}^\eta(\cdot,0) = u_0 \quad \text{on } \RR^d. \label{E:etaeq}
\end{align}
Lemma \ref{L:homogeqwellposed}, which is due to Lions and Souganidis \cite{LS2}, gives a necessary and sufficient condition for $\oline{H}$ (which is satisfied for convex $H$) that ensures that, for any initial condition $u_0$ and approximating family $\{W^\eta\}_{\eta > 0}$ of $W$, $\oline{u}^\eta$ has, as $\eta \to 0$, a local uniform limit $u$ as $\eta \to 0$. Moreover, $u$ is independent of the choice of approximating paths, and so, following \cite{LS2}, $u$ is defined to be the unique solution of \eqref{E:homogeq}.

In general, we are not able to estimate the homogenization error $u^{\epsilon,\eta} - \oline{u}^\eta$ uniformly in $\eta$, so it is necessary to estimate $u^\epsilon - u^{\epsilon,\eta}$ uniformly in $\epsilon$. This can be accomplished if $H$ satisfies \eqref{A:wellposedeq}. For more general Hamiltonians, the same strategy is used to carry out a quantitative analysis of \eqref{E:singlepathmild}.

\section{Assumptions} \label{S:assumptions}

\subsection{The Hamiltonian.} We assume that the Hamiltonians $H$ are uniformly coercive and uniformly Lipschitz continuous for bounded gradients, that is,
\begin{equation}
	\lim_{|p| \to +\oo} \inf_{y \in \RR^d} H(p,y) = +\oo, \text{ and, for all $R >0$, }H \in C^{0,1}\pars{ B_R \times \RR^d}. \label{A:Hmain}
\end{equation}
A stronger requirement is that, for some $q \ge 1$ and $0 < c \le C$,
\begin{equation}
	c(|p|^q - 1) \le H(p,y) \le C(|p|^q + 1) \quad \text{and} \quad |D_p H(p,y)| \le C (|p|^{q-1} + 1). \label{A:Hmainquant}
\end{equation}
As indicated in Section \ref{S:strategy}, we also use
\begin{equation}
	H(\cdot,y) \text{ is convex for all $y \in \RR^d$}. \label{A:Hconvex}
\end{equation}
The well-posedness results for \eqref{E:singlepath} require
\begin{equation}
	\begin{cases}
		p \mapsto H^*(p,\cdot) \text{ and } p \mapsto H(p,\cdot) \text{ are strictly convex on compact subsets of } \RR^d\backslash\{0\}, & \\[1mm]
		H^* \in C^2_b(B_R\backslash \oline{B_{1/R}} \times \RR^d) \text{ for all $R > 1$}, & \\[1mm]
		H^*(p,\cdot) > 0 \text{ for all $p \ne 0$, and} & \\[1mm]
		H^*(tp,y) = t^q H^*(p,y) \text{ for some $q>1$ and for all $(p,y) \in \RR^d \times \RR^d$ and $t > 0$.} &
	\end{cases} \label{A:wellposedeq}
\end{equation}

\subsection{The regularized paths.} For continuous $W: [0,\oo) \to \RR^M$ and $\eta > 0$, we consider the piecewise linear interpolation of $W^\eta$ of $W$ with partition size $\eta$. That is, for $k \in \NN_0$ and $t \in [k\eta,(k+1)\eta]$,
\begin{equation}
	W^\eta_t := W_{k\eta} + \frac{W_{(k+1)\eta} - W_{k\eta}}{\eta} (t - k\eta). \label{A:Wetalinear}
\end{equation}
Because $\dot W^\eta$ is not continuous, the classical viscosity theory does not immediately apply to the equation $u_t + \sum_{i=1}^M H^i(Du,x) \dot W^{i,\eta}_t = 0$. The solution is defined by solving $U_t + \sum_{i=1}^M H^i(DU,x) \xi^i = 0$ forward in time on each interval that $\dot W^\eta = \xi$ is constant.

\subsection{Periodic and random media.} A $1$-periodic medium is modeled by $a: \RR^d \to \RR$ satisfying
\begin{equation}
	 a(y + z) = a(y) \text{ for all $y \in \RR^d$ and $z \in \ZZ^d$. } \label{A:aperiodic}
\end{equation}
Periodic homogenization of Hamilton-Jacobi equations was first discussed by Lions, Papanicolaou, and Varadhan in \cite{LPV}. Their work was expanded by Evans \cite{E}, who developed the so-called perturbed test function method.

The mathematical formulation for the stationary ergodic environment is more complicated. We consider the measurable space $(\mbf \Omega, \mbf F)$ with $\mbf \Omega := C(\RR^d)$ and $\mbf F$ the $\sigma$-algebra generated by the maps $\{ a \mapsto a(y) \}_{y \in \RR^d}$. For $z \in \RR^d$, we define the translation operators $T_z: \mbf \Omega \to \mbf \Omega$ by $T_z a(y) := a(y+z)$. The family $\{T_z\}_{z \in \RR^d}$ forms a group, since, for all $z_1,z_2 \in \RR^d$, $T_{z_1} \circ T_{z_2} = T_{z_1 + z_2}$. 

We assume that there exists a probability measure $\mbf P$ on $(\mbf \Omega, \mbf F)$ such that $\{T_z\}_{z \in \RR^d}$ is measure-preserving and ergodic, that is,
\begin{equation}
	\begin{cases}
		\mbf P = \mbf P \circ T_z \text{ for all $z \in \RR^d$, and} & \\[1mm]
		\text{if } E \in \mbf F \text{ and }T_z E =  E \text{ for all } z \in \RR^d, \text{ then } \mbf P[E] = 1 \text{ or } \mbf P[E] = 0. & 
	\end{cases} \label{A:Hstatergod}
\end{equation}
Whenever we study random media, the assumptions and results are understood to hold $\mbf P$-almost surely, and, unless we specify otherwise, all constants are deterministic.

The first qualitative homogenization results for Hamilton-Jacobi equations in the random setting were obtained independently by Souganidis in \cite{S} and Rezakhanlou and Tarver in \cite{RT}. The degenerate elliptic ``viscous'' case was later treated by Lions and Souganidis in \cite{LS7}. Armstrong and Souganidis considered the same problem in \cite{AS}, introducing the so-called metric problem.

These results all used convexity, but recently, there has been some progress in the nonconvex case, for instance by Armstrong, Tran, and Yu \cite{ATY0} and \cite{ATY}, and by Armstrong and Cardaliaguet \cite{AC}. Ziliotto \cite{Z}, and later Feldman and Souganidis \cite{FS}, provided examples of Hamiltonians and random media that do not homogenize.

\subsection{Homogenization rates.} 
To keep the notation simple, it is convenient to introduce the solution operators $S^\epsilon_\pm(t): BUC(\RR^d) \to BUC(\RR^d)$ and $S_\pm(t): BUC(\RR^d) \to BUC(\RR^d)$ for respectively
\begin{align*}
	U^\epsilon_{\pm,t} \pm H(DU^\epsilon_\pm,x/\epsilon) = 0 \text{ in } \RR^d \times (0,\oo), \quad U^\epsilon_\pm(\cdot,0) = \phi \quad \text{on } \RR^d,
\end{align*}
and, for $\oline{H}, \oline{(-H)} : \RR^d \to \RR$,
\begin{align*}
	U_{\pm,t} + \oline{(\pm H)}(DU_\pm) = 0\text{ in } \RR^d \times (0,\oo), \quad U_\pm(\cdot,0) = \phi \quad \text{on } \RR^d,
\end{align*}
that is, $U^\epsilon_\pm(x,t) = S^\epsilon_\pm(t)\phi(x)$ and $U_\pm(x,t) = S_\pm(t)\phi(x)$.

We assume that there exist $\oline{H}, \oline{(-H)}: \RR^d \to \RR$ and $\beta > 0$ such that, for all $L > 0$, $\phi \in C^{0,1}(\RR^d)$ with $\nor{Du_0}{\oo} \le L$, some $\epsilon_0 = \epsilon_0(L) > 0$, and all $\tau > 0$ and $0 < \epsilon < \epsilon_0$,
\begin{equation}
	\begin{cases}
	\nor{ S^\epsilon_{\pm}(\cdot)\phi - S_{\pm}(\cdot) \phi }{R_\tau \times [0,\tau]} \lesssim_L (1+\tau)\epsilon^\beta, &  \\[1mm]
	\text{where $R_\tau = \RR^d$ or $R_\tau = B_\tau$ in respectively the periodic and random settings.}
	\end{cases}
	\label{A:Hhomog}
\end{equation}

For periodic homogenization, \eqref{A:Hmain} is enough to conclude that \eqref{A:Hhomog} holds with $\beta = \frac{1}{3}$. This was proved by Capuzzo-Dolcetta and Ishii in \cite{CDI} for the time-independent equation $u^\epsilon + H(Du^\epsilon,x,x/\epsilon) = 0$ on $\RR^d$, using a quantitative version of the perturbed test function method. A standard adaptation of the argument leads to the time dependent result. 

The rate \eqref{A:Hhomog} was obtained in the random setting by Armstrong, Cardaliaguet, and Souganidis in \cite{ACS} by quantifying the methods in \cite{AS}. The precise value of the exponent $\beta$ in this case depends on the various properties of $H$. In addition to \eqref{A:Hmain}, it is necessary to have a stronger mixing assumption than ergodicity, namely that the environment has a finite range of dependence. A full list of assumptions that give \eqref{A:Hhomog} in the random setting can be found in \cite{ACS}.

The precise quantitative statement in \cite{ACS} is that, almost surely,
\[
	\limsup_{\epsilon \to 0} \nor{S^\epsilon_{\pm}(\cdot)\phi - S_{\pm}(\cdot) \phi}{B_\tau \times [0,\tau]} \epsilon^{-\beta} \lesssim_L 1 + \tau.
\]
This is equivalent to \eqref{A:Hhomog}, but we emphasize that, in general, $\epsilon_0$ is random.

\section{Preliminary results} \label{S:prelim}

\subsection{The condition $\oline{(-H)}(p) = - \oline{H}(p)$}

As explained in Section \ref{S:strategy}, the homogenization of \eqref{E:singlepath} and \eqref{E:singlepathmild} relies on the equality
\begin{equation}
	\oline{(-H)} = - \oline{H}. \label{A:Hconsistency}
\end{equation}
It is not known how to characterize the Hamiltonians $H$ whose effective Hamiltonians satisfy \eqref{A:Hconsistency}. This is an inverse problem similar to those studied by Luo, Tran, and Yu in \cite{LTY}.

Here, we give a sufficient criterion for \eqref{A:Hconsistency}. We use the fact that $\oline{H}$ and $\oline{(-H)}$ are given by
\begin{equation}
	\oline{H}(p) = \inf_{v \in \mcl G} \sup_{y \in \RR^d} H(p + D_y v, y) \quad \text{and} \quad \oline{(-H)}(p) = \sup_{v \in \mcl G} \inf_{y \in \RR^d} \pars{ -H(p + D_y v, y) }, \label{E:barHformula}
\end{equation}
where the supremum and infimum over $y \in \RR^d$ are interpreted in the viscosity sense, and $\mcl G$ is the set of $v \in C(\RR^d)$ such that
\[
	\begin{cases}
		v \text{ is periodic} & \text{in the periodic setting, } \\[1mm]
		\lim_{|y| \to \oo} \frac{v(y)}{|y|} = 0 \text{ almost surely}& \text{in the random setting.}
	\end{cases}
\]
This is immediate for periodic $H$ in view of the comparison principle and the definition of correctors (see \cite{LPV} or \cite{E}). In the random setting, \eqref{E:barHformula} holds for convex $H$, which was proved in \cite{LS2}.

\begin{lemma} \label{L:convexconsistency}
	Assume \eqref{A:Hmain} and \eqref{A:Hconvex}. Then, in either the periodic or random setting, \eqref{A:Hconsistency} holds.
\end{lemma}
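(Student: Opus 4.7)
The plan is to take the two representation formulas \eqref{E:barHformula} as the starting point and reduce the lemma to a purely algebraic identity. Under \eqref{A:Hmain} these formulas are classical in the periodic setting (they follow from the existence of correctors and the comparison principle; see \cite{LPV}, \cite{E}), and in the random setting they are the content of \cite{LS2}, whose proof crucially uses convexity of $H$---or equivalently, concavity of $-H$---to obtain the $\sup$-$\inf$ representation of $\oline{(-H)}$. Assuming these representations, I would invoke them at the very beginning of the proof.

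Once \eqref{E:barHformula} is in hand, everything follows from the elementary identities $\inf_y(-f(y)) = -\sup_y f(y)$ and $\sup_v(-g(v)) = -\inf_v g(v)$. Concretely, compute
\[
	\oline{(-H)}(p) \;=\; \sup_{v \in \mcl G}\inf_{y \in \RR^d} \pars{ -H(p + D_y v,y)} \;=\; -\inf_{v \in \mcl G} \sup_{y \in \RR^d} H(p+D_y v, y) \;=\; -\oline{H}(p).
\]
The outer $\sup$ and $\inf$ over $y$ are read in the viscosity sense, but the sign change passes cleanly through this reading because $v$ is a viscosity supersolution of $-H(p+D_y v,y) = c$ if and only if $v$ is a viscosity subsolution of $H(p+D_y v, y) = -c$. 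Consequently, the value of $\sup_v \inf_y (-H(p+D_y v, y))$ in the viscosity sense equals $-1$ times the value of $\inf_v \sup_y H(p+D_y v,y)$ in the viscosity sense, which is exactly what the two formulas in \eqref{E:barHformula} declare to be $\oline{(-H)}(p)$ and $\oline{H}(p)$, respectively.

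The main (and essentially only) obstacle is verifying \eqref{E:barHformula} itself in the random setting: for a generic Hamiltonian that is neither convex nor concave the $\inf$-$\sup$ and $\sup$-$\inf$ quantities need not coincide with the effective Hamiltonian, so the appeal to \cite{LS2} genuinely requires \eqref{A:Hconvex}. Given this input, the remainder of the proof is the one-line display above.
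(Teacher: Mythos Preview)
Your argument contains a genuine gap at the step where you assert that ``$v$ is a viscosity supersolution of $-H(p+D_y v,y)=c$ if and only if $v$ is a viscosity subsolution of $H(p+D_y v,y)=-c$.'' This equivalence is false in general. A viscosity supersolution condition is tested by smooth $\phi$ touching $v$ from \emph{below} (local minima of $v-\phi$), whereas a viscosity subsolution condition is tested from \emph{above} (local maxima of $v-\phi$). Multiplying the equation by $-1$ reverses the algebraic inequality but does not exchange the side from which the test function touches. Hence both conditions impose the same pointwise inequality $H(p+D\phi(y_0),y_0)\le -c$, but at entirely different sets of test points, and for a merely Lipschitz $v$ there is no reason these should agree. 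This is exactly the obstruction the paper flags in the paragraph immediately following the proof, where it notes that ``viscosity inequalities are sensitive to multiplication by $-1$'' and exhibits a nonconvex counterexample to \eqref{A:Hconsistency}.

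The paper's proof circumvents this by using \eqref{A:Hmain} and \eqref{A:Hconvex} in tandem: coercivity forces any $v$ satisfying $H(p+Dv,y)\le\nu$ in the viscosity sense to be Lipschitz, so the inequality holds almost everywhere; convexity of $H(\cdot,y)$ then gives the converse (an a.e.\ subsolution of a convex Hamiltonian is a viscosity subsolution). This lets one replace the viscosity $\sup_y$ and $\inf_y$ in \eqref{E:barHformula} by $\esssup_y$ and $\essinf_y$, and in the almost-everywhere reading the identity $\essinf_y(-f)=-\esssup_y f$ is trivially valid. Your one-line display is correct once rewritten with $\esssup$ and $\essinf$; the passage from the viscosity interpretation to the a.e.\ interpretation is precisely where the convexity hypothesis is spent, and your proposal skips it.
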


\begin{proof}
Suppose that, for some continuous $v: \RR^d \to \RR$ and $\nu \in \RR$, $H(Dv,y) \le \nu$ in $\RR^d$ in the viscosity sense. The coercivity of $H$ implies that $v$ is Lipschitz and, hence, satisfies the inequality almost everywhere. Since $H$ is convex, the converse also holds (see \cite{AS}). 

It follows that the supremum and infimum over $y \in \RR^d$ in \eqref{E:barHformula} may be interpreted in the almost everywhere sense, and, therefore,
\begin{align*}
	\oline{(-H)}(p) &= \sup_{v \in \mcl G} \essinf_{y \in \RR^d} \pars{ -H(p + D_y v, y) }
	= - \inf_{v \in \mcl G} \esssup_{y \in \RR^d} H(p + D_y v, y) 
	= -\oline{H}(p).
\end{align*}
\end{proof}

The argument above fails in the nonconvex case because viscosity inequalities are sensitive to multiplication by $-1$. Indeed, we provide a counterexample to \eqref{A:Hconsistency} in the periodic setting, based on an example from \cite{LTY}.

If $v_-$ is a corrector for $\oline{(-H)}(p)$ and $\tilde v(y) := -v_-( -y)$, then $\tilde v$ is a viscosity solution of
\[
	H(D_y \tilde v + p, -y) = - \oline{(-H)}(p).
\]
Therefore, \eqref{A:Hconsistency} is equivalent to the invariance of $\oline{H}$ under a reflection of the periodic medium, that is, $\{ (p,y) \mapsto H(p,y)\}$ and $\{ (p,y) \mapsto H(p,-y) \}$ have the same effective Hamiltonian. It follows that \eqref{A:Hconsistency} is satisfied if $H$ is even in $y$, although we will not use this here.

Following \cite{LTY}, let $F: [0,\oo) \to \RR$ be a continuous function such that
\begin{enumerate}
\item there exist $0 < \theta_3 < \theta_2 < \theta_1$ with $F(0) = 0$, $F(\theta_2) = \frac{1}{2}$, $F(\theta_3) = F(\theta_2) = \frac{1}{3}$,

\item F is strictly increasing on $[0,\theta_2]$ and $[\theta_1,+\oo)$ and strictly decreasing on $[\theta_2, \theta_1]$, and

\item $\lim_{r \to +\oo} F(r) = +\oo$,
\end{enumerate}
and, for $s \in (0,1)$, let $V_s : \RR \to \RR$ be the $1$-periodic extension of
\[
	V_s(y) :=
	\begin{cases}
		- \frac{x}{s} & \text{for } y \in [0,s],\\[1mm]
		\frac{x-1}{1-s} & \text{for } y \in [s,1].
	\end{cases}
\]
For $(p,y) \in \RR \times \RR$, set $H_s(p,y) := F(|p|) + V_s(y)$. It is shown in \cite{LTY} that $\oline{H_s} = \oline{H_{s'}}$ if and only if $s = s'$. In particular, since $H_s(p,-y) = H_{1-s}(p,y)$, $H_s$ fails to satisfy \eqref{A:Hconsistency} whenever $s \ne \frac{1}{2}$.

\subsection{Well-posedness for the homogenized equation}

Since, in general, $\oline{H}$ is only continuous, \eqref{E:homogeq} falls within the scope of the nonsmooth equations discussed in \cite{LS2} and \cite{LS5}. Here, we summarize some of the results that appear there.

We write $u = S(W,u_0)$, where the solution operator $S$ is a priori only defined for $C^1$-paths, $S: C^1([0,\oo)) \times BUC(\RR^d) \to BUC(\RR^d)$. The key assumption is that
\begin{equation}
	\oline{H} \text{ is the difference of two convex functions.} \label{A:Hdiffconvex}
\end{equation}

\begin{lemma} \label{L:homogeqwellposed} 
	The solution operator $S$ has a unique continuous extension to $C([0,\oo),\RR) \times BUC(\RR^d)$ if and only if \eqref{A:Hdiffconvex} holds. Moreover, in this case, for all $L > 0$, $u_0 \in C^{0,1}(\RR^d)$ with $\nor{Du_0}{\oo} \le L$, $W^1, W^2 \in C([0,\oo),\RR)$, and $T > 0$,
	\[
		\nor{ S(W^1,u_0) - S(W^2,u_0) }{\RR^d \times [0,T]} \lesssim_L \nor{ W^1 - W^2}{[0,T]}. 
	\]
\end{lemma}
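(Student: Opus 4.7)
The plan is to handle the two directions of the equivalence separately, with the Lipschitz estimate coming out of the sufficiency argument. Write $\oline{H} = H_1 - H_2$ with $H_1, H_2 : \RR^d \to \RR$ convex; the goal in the sufficiency direction is to define $S(W, u_0)$ for piecewise linear $W$ by a Trotter-type splitting adapted to the partition of $W$. On each monotone subinterval $[t_k, t_{k+1}]$ of $W$, the change of variable $\tau = \pm(W_t - W_{t_k})$ turns $u_t + \oline{H}(Du)\dot W_t = 0$ into one of the classical equations $v_\tau \pm (H_1(Dv) - H_2(Dv)) = 0$ on $[0, |W_{t_{k+1}} - W_{t_k}|]$, which I solve by alternating the forward Hopf--Lax semigroup (inf-convolution with $H_1^*$) for the convex piece with the dual sup-convolution semigroup (convolution with $H_2^*$) for the concave piece. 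Composing across the partition of $[0,T]$ produces $S(W, u_0)$, and this coincides with the classical viscosity solution when $W \in C^1$.

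To get the Lipschitz bound, the first key task is to show a uniform-in-partition control $\nor{DS(W, u_0)(\cdot, t)}{\oo} \lesssim_L 1$ for $u_0$ with $\nor{Du_0}{\oo} \le L$, using the coercivity of $H_1, H_2$ and propagation of Lipschitz regularity along each sub-semigroup. Once this is in hand, an $L^\infty$ comparison between the splittings associated to two piecewise linear paths $W^1, W^2$ on a common refinement of their partitions gives
\[
\nor{S(W^1, u_0) - S(W^2, u_0)}{\RR^d \times [0,T]} \lesssim_L \nor{W^1 - W^2}{[0,T]}.
\]
Since piecewise linear paths are dense in $C([0,T], \RR)$ and the bound is independent of the partition, it extends $S$ uniquely and continuously to $C([0,\oo), \RR) \times BUC(\RR^d)$ and delivers the stated estimate simultaneously.

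For necessity, assume $S$ admits a continuous extension. Mimicking Section \ref{S:strategy}, I use a piecewise linear $W^\eta$ with slopes alternating between $2\xi_1$ and $2\xi_2$ on intervals of length $\eta$; it converges uniformly to the linear path $W_t = (\xi_1 + \xi_2)t$. Continuity of the extension forces the iterated composition of the solution operators on the alternating intervals to converge to the single-slope operator corresponding to $W$, for every $u_0 \in BUC(\RR^d)$. Taking $\xi_1 > 0$ and $\xi_2 < 0$ so that both the forward and backward viscosity semigroups of $\oline{H}$ appear in the product, and using the characterization from \cite{LS2} of Hamilton--Jacobi semigroups via inf- and sup-convolutions with Legendre transforms, one recovers that $\oline{H}$ must decompose as a difference of two convex functions, i.e.\ \eqref{A:Hdiffconvex}.

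The main obstacle is the uniform-in-partition Lipschitz control in the sufficiency argument: the forward Hopf--Lax semigroup for convex $H_1$ preserves Lipschitz regularity, but its dual sup-convolution semigroup for $H_2$ only preserves semiconvexity, so a naive composition can destroy the Lipschitz norm as the partition is refined. The technical heart of the proof is therefore to arrange the splitting so that the Lipschitz bound from the convex step compensates for any potential loss in the concave step on each small time increment, giving a bound that depends only on $L$, $T$, and the coercivity constants of $H_1, H_2$.
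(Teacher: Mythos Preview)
The paper does not prove this lemma; it is stated as a summary of results from Lions--Souganidis \cite{LS2} and is used as a black box. So there is no in-paper proof to compare your proposal against.

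That said, your sketch has a concrete misconception worth flagging. You identify the ``main obstacle'' as obtaining a uniform-in-partition Lipschitz bound, worrying that the sup-convolution semigroup for the concave piece $-H_2$ might degrade the Lipschitz norm. But this is a non-issue: the equation $v_t + \oline{H}(Dv)\dot W_t = 0$ has a spatially homogeneous Hamiltonian, so its solution operator commutes with translations and, by the contraction property, \emph{preserves} Lipschitz constants exactly. Both inf- and sup-convolutions with a convex function preserve the Lipschitz norm of their argument. Hence $\nor{DS(W,u_0)(\cdot,t)}{\oo} \le \nor{Du_0}{\oo}$ is immediate and requires no delicate balancing.

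The genuine technical heart of the Lions--Souganidis argument, which your proposal does not mention, is the \emph{cancellation property}: for convex $G$, if $S_+^G(t)$ and $S_-^G(t)$ denote the forward and backward semigroups for $v_t \pm G(Dv) = 0$, then $S_-^G(t) \circ S_+^G(t) = \Id$ on $BUC(\RR^d)$. This is what makes the splitting stable under refinement of partitions of $W$ and yields the Lipschitz-in-$W$ estimate directly; without it, a Trotter-type composition could accumulate errors as the mesh goes to zero, regardless of how well Lipschitz constants are controlled. Your necessity argument is also too vague: the piecewise-linear path construction from Section~\ref{S:strategy} shows that additivity in the slope is necessary, but getting from there to ``$\oline{H}$ is a difference of convex functions'' requires a separate argument (in \cite{LS2}, essentially the failure of the cancellation identity when $\oline{H}$ is not of this form), which you have not supplied.
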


Finding the most general condition on $H$ that implies that $\oline{H}$ satisfies \eqref{A:Hdiffconvex} is an open problem. On the other hand, the next result is standard in the homogenization literature, and follows, for instance, from \eqref{E:barHformula}.

\begin{lemma} \label{L:convexconvex}
	Assume \eqref{A:Hmain} and \eqref{A:Hconvex}. Then $\oline{H}$ is convex.
\end{lemma}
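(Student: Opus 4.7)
\textbf{Proof plan for Lemma \ref{L:convexconvex}.} The plan is to read off convexity of $\oline{H}$ directly from the variational formula \eqref{E:barHformula}, exploiting that under \eqref{A:Hconvex} the supremum can be interpreted in the a.e.\ sense (as established in the proof of Lemma \ref{L:convexconsistency}), and that the set $\mcl G$ is convex.

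First, I would fix $p_1,p_2 \in \RR^d$ and $\lambda \in (0,1)$, set $p := \lambda p_1 + (1-\lambda)p_2$, and pick $\delta > 0$. Using \eqref{E:barHformula}, choose correctors $v_1,v_2 \in \mcl G$ (approximate minimizers) such that
\[
	\esssup_{y \in \RR^d} H(p_i + D_y v_i, y) \le \oline{H}(p_i) + \delta, \qquad i = 1,2.
\]
By coercivity, $v_1$ and $v_2$ are Lipschitz, so their gradients exist almost everywhere. Observe that $\mcl G$ is convex: in the periodic setting periodic continuous functions form a vector space, while in the random setting the sublinear growth condition $\lim_{|y|\to\oo} v(y)/|y| = 0$ is preserved under convex combinations. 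Hence $v := \lambda v_1 + (1-\lambda) v_2 \in \mcl G$.

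Next, applying the pointwise convexity \eqref{A:Hconvex} at almost every $y$ with $D_y v = \lambda D_y v_1 + (1-\lambda) D_y v_2$ gives
\[
	H(p + D_y v, y) \le \lambda H(p_1 + D_y v_1, y) + (1-\lambda) H(p_2 + D_y v_2, y) \quad \text{a.e. } y.
\]
Taking the essential supremum on both sides and using that $\esssup$ is subadditive under convex combinations,
\[
	\esssup_y H(p + D_y v, y) \le \lambda \esssup_y H(p_1 + D_y v_1, y) + (1-\lambda) \esssup_y H(p_2 + D_y v_2, y).
\]
Combining this with the $\delta$-optimality of $v_1,v_2$, and using \eqref{E:barHformula} (in the a.e.\ interpretation) to bound $\oline{H}(p) \le \esssup_y H(p + D_y v, y)$, yields
\[
	\oline{H}(p) \le \lambda \oline{H}(p_1) + (1-\lambda) \oline{H}(p_2) + \delta.
\]
Letting $\delta \to 0$ concludes the proof.

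The only real subtlety, and the step I would be most careful about, is the passage between the viscosity and a.e.\ interpretations of the sup in \eqref{E:barHformula}: the convex combination $v$ need not itself be a viscosity subsolution of $H(p + D_y v, y) \le \nu$ by direct manipulation, but under \eqref{A:Hconvex} the equivalence of viscosity and a.e.\ subsolutions for coercive convex Hamiltonians (the same fact invoked in the proof of Lemma \ref{L:convexconsistency}, via \cite{AS}) lets us work entirely in the a.e.\ setting and then reinterpret the resulting bound viscosity-wise. Everything else is a routine manipulation of infima and suprema.
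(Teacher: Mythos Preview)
Your proposal is correct and is exactly the argument the paper has in mind: the paper does not give a proof but simply remarks that the result ``is standard in the homogenization literature, and follows, for instance, from \eqref{E:barHformula}.'' Your write-up fills in precisely those details---using the convexity of $\mcl G$, the pointwise convexity of $H$, and the a.e.\ interpretation of the supremum justified (as in Lemma \ref{L:convexconsistency}) by the equivalence of viscosity and a.e.\ subsolutions for coercive convex Hamiltonians.
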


\subsection{Finite speed of propagation}

We use the classical fact that solutions of Hamilton-Jacobi equations have a cone of dependence. For a proof, we refer to Lions \cite{L}.

\begin{lemma}\label{L:finitespeed}
	Let $U^1,U^2$ be respectively Lipschitz continuous sub- and super-solutions of $v_t + H(Dv,x) = 0$ in $\RR^d \times (0,\oo)$. Define $M := \max \pars{ \nor{DU^1}{\oo} , \nor{DU^2}{\oo}}$ and $\mcl L := \nor{D_p H}{B_M \times \RR^d}$. Then, for any $R > 0$ and $0 \le t \le \frac{R}{\mathcal L}$,
	\[
		\sup_{x \in B_{R- \mathcal L t} } \pars{ U^1(x,t) - U^2(x,t)} \le \sup_{x \in B_R} \pars{ U^1(x,0) - U^2(x,0)}. 
	\]
\end{lemma}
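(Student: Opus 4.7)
The plan is a doubling-of-variables argument in which a cone-shaped test function together with a ``time-penalty'' bias forces the maximum to occur at the base $s=0$. Fix an arbitrary $(x_0, t_0)$ with $|x_0| + \mathcal{L} t_0 \le R$; by arbitrariness of $x_0$ in $B_{R - \mathcal{L}t_0}$, it suffices to prove
\[
U^1(x_0, t_0) - U^2(x_0, t_0) \le \sigma := \sup_{B_R}\pars{U^1 - U^2}(\cdot, 0).
\]
The backward cone $C := \{(x, s) : |x - x_0| + \mathcal{L}(s - t_0) \le 0,\ 0 \le s \le t_0\}$ lies in $B_R \times [0, t_0]$, which is the geometric content of the lemma.

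For small $\delta > 0$, let $f : \RR \to [0, \oo)$ be a $C^1$, convex, nondecreasing function vanishing on $(-\oo, 0]$ and strictly positive on $(0, \oo)$ (e.g.\ $f(s) := s_+^2$), and define the cone penalty $\psi(x, s) := f\bigl(|x - x_0| + (\mathcal{L} + \delta)(s - t_0)\bigr)$, which satisfies $\partial_s \psi = (\mathcal{L} + \delta)|D_x \psi|$ wherever $|D_x \psi| > 0$. For $\epsilon, \eta > 0$, consider
\[
\Phi(x, y, s) := U^1(x, s) - U^2(y, s) - \frac{|x - y|^2}{2\epsilon} - \psi(x, s) - \eta s
\]
on $\RR^d \times \RR^d \times [0, t_0]$, and let $(x_\epsilon, y_\epsilon, s_\epsilon)$ be a maximizer, whose existence follows from the quadratic growth of $\psi$ in $|x - x_0|$ and the quadratic doubling term. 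Writing $p_\epsilon := (x_\epsilon - y_\epsilon)/\epsilon$, standard Lipschitz bounds in Ishii's lemma give $|p_\epsilon| \le M$ and $|x_\epsilon - y_\epsilon| \le M\epsilon$.

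If $s_\epsilon > 0$, Ishii's lemma together with the viscosity inequalities for $U^1$ (sub) and $U^2$ (super) yields, after subtraction,
\[
\partial_s \psi(x_\epsilon, s_\epsilon) + \eta + H\bigl(p_\epsilon + D_x \psi(x_\epsilon, s_\epsilon), x_\epsilon\bigr) \le H\bigl(p_\epsilon, y_\epsilon\bigr).
\]
Since $p_\epsilon + D_x \psi$ also lies in $B_M$ (by the Lipschitz constraint from $U^1$), the bound $|D_p H| \le \mathcal{L}$ on $B_M$, the identity $\partial_s \psi = (\mathcal{L} + \delta)|D_x \psi|$, and the spatial Lipschitz continuity of $H$ on $B_M \times \RR^d$ from \eqref{A:Hmain} together yield
\[
\delta\, |D_x \psi(x_\epsilon, s_\epsilon)| + \eta \le O(\epsilon),
\]
which is violated for $\epsilon \ll \eta$. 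Hence $s_\epsilon = 0$, and then Lipschitz continuity of $U^1$ and the definition of $\sigma$ give $\Phi(x_\epsilon, y_\epsilon, 0) \le \sigma + O(\epsilon) + O(\delta)$ (the $O(\delta)$ accounting for the slight enlargement of the base from using speed $\mathcal{L} + \delta$). Pairing this with the lower bound $\Phi(x_0, x_0, t_0) = U^1(x_0, t_0) - U^2(x_0, t_0) - \eta t_0 \le \sup \Phi$ and sending $\epsilon \to 0$, $\delta \to 0$, $\eta \to 0$ in this order yields the claim.

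The main obstacle is the orchestration of the three parameters $\epsilon, \delta, \eta$: the time-penalty $\eta$ supplies the strict inequality that precludes $s_\epsilon > 0$ but must vanish last, while the excess speed $\delta$ absorbs the Lipschitz error of $H$ in $x$ and must vanish after $\epsilon$. A minor technicality is that $\psi$ is only $C^1$ (satisfied by $f(s) = s_+^2$), which nevertheless suffices to use it as a test function in the viscosity inequalities; at points where $\psi \equiv 0$ in a neighborhood, the analysis degenerates to the same inequality with $|D_x\psi| = 0$, leaving the contradiction $\eta \le O(\epsilon)$ intact.
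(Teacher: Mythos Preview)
The paper does not give its own proof; it simply cites Lions \cite{L}. Your cone-penalty doubling argument is the standard route to this finite-speed result, and the step that rules out $s_\epsilon>0$ is correct: the identity $\partial_s\psi=(\mathcal L+\delta)|D_x\psi|$, together with $|p_\epsilon|\le M$, $|p_\epsilon+D_x\psi|\le M$ (both forced by the Lipschitz bounds on $U^1,U^2$), and the spatial Lipschitz continuity of $H$ on $B_M\times\RR^d$, yields the contradiction $\delta|D_x\psi|+\eta\le O(\epsilon)$.

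There is, however, a genuine gap in the claim $\Phi(x_\epsilon,y_\epsilon,0)\le\sigma+O(\epsilon)+O(\delta)$. Knowing $s_\epsilon=0$ and $|x_\epsilon-y_\epsilon|\le M\epsilon$ does not force $x_\epsilon$ into a $\delta$-enlargement of $B_R$. Writing $r:=(|x_\epsilon|-R)_+$, Lipschitz continuity only gives $(U^1-U^2)(x_\epsilon,0)\le\sigma+2Mr$, while your penalty with $f(s)=s_+^2$ only gives $\psi(x_\epsilon,0)\ge(r-\delta t_0)_+^2$ (using $|x_\epsilon-x_0|\ge |x_\epsilon|-|x_0|\ge r+\mathcal L t_0$). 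Maximising $2Mr-(r-\delta t_0)_+^2$ over $r>0$ yields $M^2+2M\delta t_0$, which is not $O(\delta)$. The fix is to take $f(s)=Ks_+^2$ with an additional parameter $K>0$. Scaling does not affect the relation $\partial_s\psi=(\mathcal L+\delta)|D_x\psi|$, so the argument excluding $s_\epsilon>0$ goes through unchanged (the $O(\epsilon)$ there is independent of $K$), while at $s_\epsilon=0$ the same computation now gives
\[
(U^1-U^2)(x_\epsilon,0)-\psi(x_\epsilon,0)\le\sigma+\frac{M^2}{K}+2M\delta t_0.
\]
Send $\epsilon\to0$, then $K\to\infty$, then $\delta,\eta\to0$.
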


It follows that the semigroups $S^\epsilon_\pm$ and $S_\pm$ satisfy a contraction property on balls decreasing with finite speed. This is used to track how the oscillations  of $W$ affect the homogenization error in \eqref{A:Hhomog}.

The analysis of the domain of dependence for \eqref{E:maineq} is still an open field of study. For spatially independent $H^i$ satisfying \eqref{A:Hdiffconvex}, it is shown in \cite{LS2} that the equality of the solution to a constant propagates at a finite speed. A counterexample to the finite speed of propagation for solutions of nonconvex, spatially independent $H^i$ was given by Gassiat in \cite{G}.

\section{Homogenization with a single path} \label{S:onepath}

For continuous $W: [0,\oo) \to \RR$ and $u_0 \in BUC(\RR^d)$, we prove a quantitative homogenization result for
\begin{equation}
	du^\epsilon + H(Du^\epsilon,x/\epsilon) \cdot dW = 0 \quad \text{in }\RR^d \times (0,\oo), \qquad u^\epsilon(\cdot,0) = u_0 \quad \text{on } \RR^d. \label{E:singlepathSection}
\end{equation}
We assume that $H$ satisfies \eqref{A:wellposedeq}, so that, in view of the results in the Appendix, \eqref{E:singlepathSection} admits a unique solution for any continuous $W$ and $u_0 \in BUC(\RR^d)$.

Since \eqref{A:wellposedeq} implies \eqref{A:Hmain} and \eqref{A:Hconvex}, it follows from Lemmas \ref{L:convexconsistency} and \ref{L:convexconvex} that the effective Hamiltonian $\oline{H}$ satisfies \eqref{A:Hconsistency} and \eqref{A:Hdiffconvex}. Therefore, Lemma \ref{L:homogeqwellposed} yields the well-posedness of
\begin{equation}
	du + \oline{H}(Du) \cdot dW \quad \text{in } \RR^d \times (0,\oo), \qquad u(\cdot,0) = u_0 \quad \text{on } \RR^d. \label{E:homogeqSection}
\end{equation}

To state the result, we introduce the following additional notation. Given continuous $W$ and $T > 0$, let $\chi_{W,T}: [0,\oo) \to [0,\oo)$ be the inverse of $s \mapsto s\omega_{W,T}(s)$, which is strictly monotone because the modulus $\omega_{W,T}$ is nondecreasing. In what follows, $R_T$, $\beta$, and $\epsilon_0$ are as in \eqref{A:Hhomog}.

\begin{theorem}\label{T:homogenization}
	Assume \eqref{A:wellposedeq}, \eqref{A:Hhomog}, and $u_0 \in BUC(\RR^d)$. If $u^\epsilon$ and $u$ are respectively the solutions of \eqref{E:singlepathSection} and \eqref{E:homogeqSection}, then, for all $T > 0$,
	\begin{equation*}
		\lim_{\epsilon \to 0} \nor{ u^\epsilon - u}{B_T \times [0,T]} = 0.
	\end{equation*}
	Moreover, for all $L > 0$, $u_0 \in C^{0,1}(\RR^d)$ with $\nor{Du_0}{\oo} \le L$, and $T \ge 1$, 
	\begin{equation*}
		\limsup_{\epsilon \to 0} \frac{ \nor{u^\epsilon - u}{R_T \times [0,T]} }{\psi_T\pars{\epsilon^\beta}} \lesssim_L 1,
	\text{ where }
		\psi_T(s) =
		\begin{cases}
			\omega_{W,T}\pars{ \chi_{W,T} \pars{ Ts} } & \text{in the periodic case,}\\
			\omega_{W,T}\pars{ T s^{1/2} } & \text{in the random case.}
		\end{cases}
	\end{equation*}
\end{theorem}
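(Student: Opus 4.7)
I will follow the three-step approximation scheme from Section \ref{S:strategy}. For $\eta > 0$, let $W^\eta$ be the piecewise linear interpolation of $W$ with partition size $\eta$ from \eqref{A:Wetalinear}, and let $u^{\epsilon,\eta}$ and $\oline{u}^\eta$ be the solutions of \eqref{E:epsetaeq} and \eqref{E:etaeq}, respectively. Since \eqref{A:wellposedeq} implies \eqref{A:Hmain} and \eqref{A:Hconvex}, Lemmas \ref{L:convexconsistency} and \ref{L:convexconvex} give \eqref{A:Hconsistency} and \eqref{A:Hdiffconvex}, so Lemma \ref{L:homogeqwellposed} makes $u = S(W,u_0)$ well-defined. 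I write
\[
u^\epsilon - u = (u^\epsilon - u^{\epsilon,\eta}) + (u^{\epsilon,\eta} - \oline{u}^\eta) + (\oline{u}^\eta - u),
\]
bound each piece separately, and then optimize in $\eta = \eta(\epsilon)$.

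The outer terms are controlled by continuity of the two solution maps in the driving path. Theorem \ref{IT:extension} controls $\|u^\epsilon - u^{\epsilon,\eta}\|$ by a modulus of $\|W - W^\eta\|_{[0,T]} \le \omega_{W,T}(\eta)$ that is uniform in $\epsilon$; for $u_0 \in C^{0,1}$ with $\nor{Du_0}{\oo} \le L$ this modulus should be made $\lesssim_L \omega_{W,T}(\eta)$. Lemma \ref{L:homogeqwellposed} yields the matching bound $\|\oline{u}^\eta - u\|_{\RR^d \times [0,T]} \lesssim_L \omega_{W,T}(\eta)$ directly.

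The main work is the middle term. On $[k\eta,(k+1)\eta]$ the derivative $\dot{W}^\eta \equiv \xi_k := \eta^{-1}(W_{(k+1)\eta} - W_{k\eta})$ is constant, so the time change $\tau = |\xi_k|(t - k\eta)$ recasts \eqref{E:epsetaeq} and \eqref{E:etaeq} as classical Hamilton-Jacobi flows driven by $S^\epsilon_\pm$ and $S_\pm$ (with sign $\sgn\xi_k$), run for effective time $\tau_k := |\xi_k|\eta \le \omega_{W,T}(\eta)$. Restarting the comparison at each step from $\oline{u}^\eta(\cdot, k\eta)$ and inserting and subtracting $S^\epsilon_{\sgn\xi_k}(\tau_k)\oline{u}^\eta(\cdot,k\eta)$, the $L^\infty$-contraction of $S^\epsilon_\pm$ absorbs the accumulated error, while the new increment is controlled by \eqref{A:Hhomog} as $\lesssim_L (1+\tau_k)\epsilon^\beta$. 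Summed over the $N = T/\eta$ subintervals this gives, in the periodic case,
\[
\|u^{\epsilon,\eta}(\cdot,T) - \oline{u}^\eta(\cdot,T)\|_{\RR^d} \lesssim_L \frac{T}{\eta}\bigl(1 + \omega_{W,T}(\eta)\bigr)\epsilon^\beta \lesssim_L \frac{T\epsilon^\beta}{\eta}.
\]
Balancing the three contributions via $\eta\omega_{W,T}(\eta) \asymp T\epsilon^\beta$, equivalently $\eta = \chi_{W,T}(T\epsilon^\beta)$, recovers $\omega_{W,T}(\chi_{W,T}(T\epsilon^\beta)) = \psi_T(\epsilon^\beta)$. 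The qualitative $\lim = 0$ statement for $u_0 \in BUC$ without a Lipschitz bound then follows by approximating $u_0$ uniformly by Lipschitz functions and exploiting the $L^\infty$-contraction of every operator involved.

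The hard part will be the random case, where the rate in \eqref{A:Hhomog} is spatially restricted to $R_\tau = B_\tau$ and tied to the time window $\tau$. To control the iterated error on the fixed observation ball $B_T$ I must either apply \eqref{A:Hhomog} at each step with $\tau$ inflated to dominate the backwards cone of dependence traced via Lemma \ref{L:finitespeed}, or reorganize the iteration so that the ball of validity never drops below $B_T$. Either route inflates the per-step error by a factor depending on $T/\tau_k$, producing a sum-of-errors of order $T^2\epsilon^\beta/\eta^2$ rather than $T\epsilon^\beta/\eta$; rebalancing against $\omega_{W,T}(\eta)$ gives the optimum $\eta \asymp T\epsilon^{\beta/2}$ and the advertised $\psi_T(\epsilon^\beta) = \omega_{W,T}(T\epsilon^{\beta/2})$. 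A secondary point is to keep $\nor{Du^{\epsilon,\eta}(\cdot,t)}{\oo}$ uniformly bounded across $\epsilon,\eta$ (so that the Lipschitz-dependent threshold $\epsilon_0(L)$ in \eqref{A:Hhomog} is uniform across iterations), which follows from the $L^\infty$-contraction applied to spatial translates together with the coercivity built into \eqref{A:wellposedeq}; the fact that $\epsilon_0(L)$ is merely almost surely positive in the random setting is harmless for the $\limsup$ statement.
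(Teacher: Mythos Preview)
Your decomposition, the handling of the outer terms, and the periodic middle term all match the paper's proof (Lemmas \ref{L:regularize} and \ref{L:homogenize}) exactly, including the optimization $\eta\omega_{W,T}(\eta)=T\epsilon^\beta$.

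The random case, however, has a genuine error. You claim the iterated middle term is of order $T^2\epsilon^\beta/\eta^2$, and then ``rebalance against $\omega_{W,T}(\eta)$'' to get $\eta\asymp T\epsilon^{\beta/2}$. But with your stated bound the total would be $\omega_{W,T}(\eta)+T^2\epsilon^\beta/\eta^2$, and plugging $\eta=T\epsilon^{\beta/2}$ gives $\omega_{W,T}(T\epsilon^{\beta/2})+1$, which does not vanish. The correct middle-term bound carries an extra factor of $\omega_{W,T}(\eta)$: it is $\lesssim_L \dfrac{T^2\omega_{W,T}(\eta)}{\eta^2}\,\epsilon^\beta$. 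The mechanism is this. On each subinterval the effective evolution time for $S^\epsilon_\pm$ is $\tau_k=|W_{(k+1)\eta}-W_{k\eta}|\le \omega_{W,T}(\eta)$, so by Lemma \ref{L:finitespeed} the cone of dependence grows by at most $\mcl L\,\omega_{W,T}(\eta)$ per step; after $i$ steps the relevant radius is $R_i\lesssim T+i\,\mcl L\,\omega_{W,T}(\eta)$. The homogenization error from \eqref{A:Hhomog} at step $i$ is then $(1+R_i)\epsilon^\beta$, not something inflated by $T/\tau_k$. Summing over $N=T/\eta$ steps gives $\sum_{i=0}^{N-1}(1+T+i\mcl L\omega_{W,T}(\eta))\epsilon^\beta\lesssim_L \dfrac{T^2\omega_{W,T}(\eta)}{\eta^2}\epsilon^\beta$. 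The total is therefore $\omega_{W,T}(\eta)\bigl(1+\tfrac{T^2}{\eta^2}\epsilon^\beta\bigr)$, and now $\eta=T\epsilon^{\beta/2}$ makes the parenthesis $\approx 2$, yielding $\psi_T(\epsilon^\beta)=\omega_{W,T}(T\epsilon^{\beta/2})$.

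A smaller point: your argument for a uniform Lipschitz bound on $u^{\epsilon,\eta}(\cdot,t)$ via ``$L^\infty$-contraction applied to spatial translates'' does not work, because the equation is not translation-invariant in $x$ (the coefficient depends on $x/\epsilon$). The paper obtains this bound instead from the equicontinuity estimate \eqref{E:equicontinuity} of Theorem \ref{T:extension}, which is uniform in $\epsilon$ and in the path; this is precisely what makes the threshold $\epsilon_0(L)$ in \eqref{A:Hhomog} uniform across the iteration.
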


For example, if $W$ is $\alpha$-H\"older continuous, then the error estimate is of the form $C \epsilon^\gamma$ for some $C = C(\nor{Du_0}{\oo}, T) > 0$ and $\gamma = \frac{\alpha\beta}{\alpha+1}$ or $\gamma = \frac{\alpha \beta}{2}$ in respectively the periodic and random cases. We do not claim that these rates are sharp, especially since the existing quantitative homogenization results are not known to be sharp.

For fixed $\eta > 0$, let $W^\eta$ be the piecewise linear interpolation of $W$ with step-size $\eta$, as in \eqref{A:Wetalinear}, and let $u^{\epsilon,\eta}$ and $\oline{u}^\eta$ be the solutions of 
\begin{equation*}
	\begin{cases}
	u^{\epsilon,\eta}_t + H(Du^{\epsilon,\eta},x/\epsilon) \dot W^\eta_t = 0 \quad \text{and} \quad \oline{u}^\eta_t + \oline{H}(D\oline{u}^\eta) \dot W^\eta_t = 0 & \text{in } \RR^d \times (0,\oo), \\[1mm]
	u^{\epsilon,\eta}(\cdot,0) = \oline{u}^\eta(\cdot,0) = u_0 \quad \text{on } \RR^d.
	\end{cases}
\end{equation*}
In view of the estimates from Theorem \ref{T:extension} and Lemma \ref{L:homogeqwellposed} for respectively \eqref{E:singlepathSection} and \eqref{E:homogeqSection}, we then have the following.

\begin{lemma}\label{L:regularize}
	Assume \eqref{A:wellposedeq}. Then, for all $L > 0$, $u_0 \in C^{0,1}(\RR^d)$ with $\nor{Du_0}{\oo} \le L$, and $\epsilon,\eta> 0$,
	\[
		\nor{ u^{\epsilon,\eta} - u^\epsilon}{\RR^d \times [0,T]} + \nor{ \oline{u}^\eta - u}{\RR^d \times [0,T]} \lesssim_L \nor{W^\eta - W}{ [0,T]} \le \omega_{W,T}(\eta).
	\]
\end{lemma}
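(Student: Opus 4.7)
The statement splits into three pieces, each of which is essentially a direct consequence of a result already available in the paper, so the argument is mostly an assembly.

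First I would handle the easy bound at the end of the chain. By the definition \eqref{A:Wetalinear}, on each interval $[k\eta,(k+1)\eta] \subset [0,T]$ the value $W^\eta_t$ is the convex combination of $W_{k\eta}$ and $W_{(k+1)\eta}$ that linearly interpolates between them. Writing
\[
W^\eta_t - W_t = \frac{(k+1)\eta - t}{\eta}\pars{W_{k\eta}-W_t} + \frac{t - k\eta}{\eta}\pars{W_{(k+1)\eta}-W_t}
\]
and using that $|t - k\eta|, |(k+1)\eta - t| \le \eta$ together with the definition of $\omega_{W,T}$ immediately gives $\nor{W^\eta - W}{[0,T]} \le \omega_{W,T}(\eta)$.

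Next I would handle the homogenized term $\oline{u}^\eta - u$. Both functions are solutions of the pathwise equation \eqref{E:homogeqSection}, with driving signals $W^\eta$ and $W$ respectively and common initial data $u_0$. Since $\oline{H}$ satisfies \eqref{A:Hconsistency} and \eqref{A:Hdiffconvex} (as noted just before the lemma, by Lemmas \ref{L:convexconsistency} and \ref{L:convexconvex}), Lemma \ref{L:homogeqwellposed} applies and yields
\[
\nor{\oline{u}^\eta - u}{\RR^d \times [0,T]} = \nor{S(W^\eta,u_0) - S(W,u_0)}{\RR^d \times [0,T]} \lesssim_L \nor{W^\eta - W}{[0,T]}.
\]

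The term $u^{\epsilon,\eta} - u^\epsilon$ is handled in the same way, but at the level of \eqref{E:singlepathSection}: both solutions come from the $\epsilon$-level solution operator acting on $u_0$ with driving signals $W^\eta$ and $W$. Theorem \ref{T:extension} (the quantitative appendix version of Theorem \ref{IT:extension}) supplies exactly the needed Lipschitz-in-$W$ continuity of this operator, with a constant depending only on $L$ and the growth of $H$, and in particular \emph{not} on $\epsilon$ (the Hamiltonian $H(\cdot,x/\epsilon)$ satisfies \eqref{A:wellposedeq} uniformly in $\epsilon$). Applying this gives $\nor{u^{\epsilon,\eta} - u^\epsilon}{\RR^d \times [0,T]} \lesssim_L \nor{W^\eta - W}{[0,T]}$. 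Adding the two bounds and combining with the first step yields the conclusion.

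The only place one has to be careful — and the closest thing to an obstacle — is to confirm that the constant in Theorem \ref{T:extension} is indeed uniform in $\epsilon$; this is why Theorem \ref{IT:extension} was stated with ``the modulus of continuity of $u^\epsilon(\cdot,t)$ depends only on the growth of $H$,'' ensuring that the Lipschitz-in-$W$ estimate survives the rescaling $x \mapsto x/\epsilon$. Everything else is a direct citation of the two continuity results and the elementary modulus estimate above.
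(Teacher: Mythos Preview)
Your proposal is correct and matches the paper's approach exactly: the paper does not give a separate proof of this lemma but simply states it as an immediate consequence of Theorem \ref{T:extension} (for the $u^{\epsilon,\eta}-u^\epsilon$ term) and Lemma \ref{L:homogeqwellposed} (for the $\oline{u}^\eta-u$ term), together with the elementary interpolation bound $\nor{W^\eta-W}{[0,T]}\le\omega_{W,T}(\eta)$. Your observation that the constant from Theorem \ref{T:extension} is uniform in $\epsilon$ because \eqref{A:Hbounds} holds with the same $c_0,C_0$ for $H(\cdot,x/\epsilon)$ is the right justification for the $\epsilon$-independence.
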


The next lemma provides an estimate for the homogenization error $u^{\epsilon,\eta} - \oline{u}^\eta$.

\begin{lemma}\label{L:homogenize}
	Assume \eqref{A:wellposedeq} and \eqref{A:Hhomog}. Then, for all $L > 0$, $u_0 \in C^{0,1}(\RR^d)$ with $\nor{Du_0}{\oo} \le L$, $0 < \eta < 1$, $0 < \epsilon < \epsilon_0$, and $T \ge 1$,
	\begin{equation*}
		\nor{u^{\epsilon,\eta} - \oline{u}^\eta}{R_T \times [0,T]} \lesssim_L \mcl E(T,\eta) \epsilon^\beta,
	\quad \text{where} \quad
		\mcl E(T,\eta) = 
		\begin{cases}
			\frac{T}{\eta} & \text{in the periodic setting,}\\[1mm]
			\frac{T^2}{\eta^2} \omega_{W,T}(\eta) & \text{in the random setting.}
		\end{cases}
	\end{equation*}
\end{lemma}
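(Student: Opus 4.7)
The plan is to exploit the fact that on each subinterval $I_k := [k\eta, (k+1)\eta]$ the piecewise-linear path has constant derivative $\xi_k := (W_{(k+1)\eta}-W_{k\eta})/\eta$, so that, after the time rescaling $\tilde t = |\xi_k|(t-k\eta)$, both $u^{\epsilon,\eta}$ and $\oline{u}^\eta$ evolve on $I_k$ via the classical semigroups $S^\epsilon_\pm$ and $S_\pm$, with sign determined by $\mathrm{sgn}(\xi_k)$. Writing $\phi_k := u^{\epsilon,\eta}(\cdot,k\eta)$ and $\oline{\phi}_k := \oline{u}^\eta(\cdot,k\eta)$, one has
\[
u^{\epsilon,\eta}(\cdot,(k+1)\eta) = S^\epsilon_{\mathrm{sgn}(\xi_k)}(|\xi_k|\eta)\phi_k, \qquad \oline{u}^\eta(\cdot,(k+1)\eta) = S_{\mathrm{sgn}(\xi_k)}(|\xi_k|\eta)\oline{\phi}_k,
\]
which reduces the lemma to iterating the quantitative homogenization bound \eqref{A:Hhomog} over the $N = \lceil T/\eta\rceil$ subintervals. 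At each step I would insert $S_\pm(|\xi_k|\eta)\phi_k$ as an intermediate term and split the one-step error into a ``new'' homogenization contribution $S^\epsilon_\pm(|\xi_k|\eta)\phi_k - S_\pm(|\xi_k|\eta)\phi_k$, to which \eqref{A:Hhomog} applies with time of evolution $|\xi_k|\eta \le \omega_{W,T}(\eta)$, and a ``propagated'' contribution $S_\pm(|\xi_k|\eta)\phi_k - S_\pm(|\xi_k|\eta)\oline{\phi}_k$, controlled by the comparison principle for $S_\pm$. Convexity of $H$ together with the Lipschitz initial datum keep $\nor{D\phi_k}{\oo}$ and $\nor{D\oline{\phi}_k}{\oo}$ bounded by $L$ uniformly in $k$, $\epsilon$, and $\eta$, so the constants supplied by \eqref{A:Hhomog} depend only on $L$.

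In the periodic setting, where $R_\tau = \RR^d$, the $L^\infty$-contraction $\nor{S_\pm(t)\phi - S_\pm(t)\psi}{\oo} \le \nor{\phi - \psi}{\oo}$ passes the propagated error through untouched. Setting $e_k := \nor{u^{\epsilon,\eta}(\cdot,k\eta)-\oline{u}^\eta(\cdot,k\eta)}{\oo}$ and choosing $\tau = |\xi_k|\eta$ in \eqref{A:Hhomog} gives the recursion $e_{k+1} \le e_k + C_L(1+|\xi_k|\eta)\epsilon^\beta$. Summing over the $N$ steps and using $|\xi_k|\eta \le \omega_{W,T}(\eta) \le \omega_{W,T}(1)$, which is bounded independently of $\eta \in (0,1)$, yields $e_N \lesssim_L (T/\eta)\epsilon^\beta$, which is the claimed estimate.

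The random case, where $R_\tau = B_\tau$, is the technical heart of the argument: \eqref{A:Hhomog} controls the new error only on a ball, and the $L^\infty$-contraction must be replaced by its localized counterpart coming from the finite speed of propagation (Lemma \ref{L:finitespeed}). I would run the iteration backwards in time, so that controlling the error on $B_T$ at time $N\eta$ requires the previous error at time $k\eta$ on the enlarged ball $B_{T + \mcl L\sum_{j=k}^{N-1}|\xi_j|\eta} \subset B_{T + \mcl L(T/\eta)\omega_{W,T}(\eta)}$, where $\mcl L$ is the propagation speed attached to the $L$-Lipschitz bound on the solutions. Choosing $\tau$ in \eqref{A:Hhomog} at each step to equal this enlarged radius (which dominates $|\xi_k|\eta$ once $T\ge 1$) and summing the $N$ one-step contributions gives a total error of order $\lesssim_L (T/\eta)\bigl(1 + \mcl L(T/\eta)\omega_{W,T}(\eta)\bigr)\epsilon^\beta$, whose dominant term for small $\eta$ is the claimed $(T^2/\eta^2)\omega_{W,T}(\eta)\epsilon^\beta$. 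Carrying out this backward bookkeeping carefully---in particular ensuring that the enlarged balls stay within the domain where \eqref{A:Hhomog} is applicable for the given $\epsilon$, and that all implicit constants remain dependent only on $L$ and not on the path $W$---is where I expect the main obstacle to lie.
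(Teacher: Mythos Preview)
Your approach is essentially the paper's: partition $[0,T]$ into the $N\approx T/\eta$ intervals of constant $\dot W^\eta$, split the one-step error into a homogenization piece handled by \eqref{A:Hhomog} and a propagated piece handled by contraction (periodic) or finite speed of propagation via Lemma~\ref{L:finitespeed} (random), then iterate. The only difference is the symmetric choice of intermediate term: you insert $S_\pm(|\xi_k|\eta)\phi_k$ and apply \eqref{A:Hhomog} to $\phi_k=u^{\epsilon,\eta}(\cdot,k\eta)$, while the paper inserts $S^\epsilon_\pm(|\xi_k|\eta)\oline\phi_k$ and applies \eqref{A:Hhomog} to $\oline\phi_k=\oline u^\eta(\cdot,k\eta)$.

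One correction: your uniform Lipschitz bound on $\phi_k$ does \emph{not} follow from ``convexity of $H$ together with the Lipschitz initial datum''---the oscillatory equation is not translation-invariant, so the Lipschitz constant is not automatically preserved. What you need is the $\epsilon$-independent equicontinuity estimate \eqref{E:equicontinuity} from Theorem~\ref{T:extension}, which is precisely what the paper invokes in \eqref{E:Lipestimates}. The paper's splitting has the mild advantage that $\|D\oline\phi_k\|_\infty\le L$ is immediate from spatial homogeneity of $\oline H$, so \eqref{A:Hhomog} applies directly with the original $L$; in your version the homogenization step is applied at the enlarged Lipschitz level $C_2L$ coming from \eqref{E:equicontinuity}, which is harmless but should be stated. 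In the random case both splittings require \eqref{E:equicontinuity} anyway, since the finite-speed constant $\mathcal L$ depends on the Lipschitz bounds of both functions being compared.
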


The quantity $\mcl E(T,\eta)$ arises because of the oscillations of $W^\eta$. Since $W^\eta$ is monotone on at most $N = [T/\eta]$ disjoint intervals, the error estimate \eqref{A:Hhomog} is applied $N$ times. It follows, in view of Lemma \ref{L:finitespeed}, that the radius of the spatial domain of dependence is on the order $N \omega_W(\eta)$, which is roughly the variation norm of $W^\eta$. Therefore, in general, the total error estimate is $N^2 \omega_W(\eta) \epsilon^\beta$. In the periodic case, the spatial domain of dependence is essentially the unit cell $[0,1]^d$, so the error estimate is only increased by a factor of $N$.

\newproof{proofofl}{Proof of Lemma \ref{L:homogenize}}

\begin{proofofl}
	We first observe that, for all $t > 0$,
	\begin{equation}
		\nor{Du^{\epsilon,\eta}(\cdot,t)}{\oo}, \nor{D \oline{u}^\eta(\cdot,t)}{\oo} \lesssim_L 1. \label{E:Lipestimates}
	\end{equation}
	The estimate for $Du^{\epsilon,\eta}$ follows from \eqref{E:equicontinuity}, while the one for $D \oline{u}^\eta$ is immediate from the space homogeneity and the contraction property of \eqref{E:homogeqSection}.
	
	Assume without loss of generality that, for some positive integer $N$, $T = N\eta$, and, for $i = 0,1,2,\ldots,N$, let $t_i := i \eta$. Then, if $W^\eta$ is increasing on the interval $[t_i,t_{i+1}]$,
	\begin{align*}
		u^{\epsilon,\eta}(\cdot,t_{i+1})& - \oline{u}^{\eta}(\cdot,t_{i+1})
		= S^\epsilon_+\pars{W_{t_{i+1}} - W_{t_i}}u^{\epsilon,\eta}(\cdot,t_i) - S_+\pars{W_{t_{i+1}} - W_{t_i}} \oline{u}^\eta(\cdot,t_i)
		= \I + \II,
	\end{align*}
	where
	\[
		\I := S^\epsilon_+\pars{ W_{t_{i+1}} - W_{t_i} }u^{\epsilon,\eta}(\cdot,t_i)
		- S^\epsilon_+\pars{ W_{t_{i+1}} - W_{t_i}} \oline{u}^\eta(\cdot,t_i)
	\]
	and
	\[
		\II := S^\epsilon_+\pars{ W_{t_{i+1}} - W_{t_i} } \oline{u}^\eta(\cdot,t_i)
		-S_+\pars{ W_{t_{i+1}} - W_{t_i} } \oline{u}^\eta(\cdot,t_i).
	\]
	If $W^\eta$ is decreasing on $[t_i,t_{i+1}]$, then the arguments that follow are similar, since $S^\epsilon_+$ and $S_+$ are replaced with $S^\epsilon_-$ and $S_-$, and $W_{t_{i+1}} - W_{t_i}$ is replaced with $\abs{ W_{t_{i+1}} - W_{t_i}}$.
	
	We first consider the periodic case. The contraction property for the semigroup $S^\epsilon_+$ yields
	\begin{align*}
		| \I | = \abs{ S^\epsilon_+\pars{ W_{t_{i+1}} - W_{t_i} } u^{\epsilon,\eta}(\cdot,t_i)
		- S^\epsilon_+\pars{ W_{t_{i+1}} - W_{t_i} }\oline{u}^\eta(\cdot,t_i)} \le \nor{u^{\epsilon,\eta}(\cdot,t_i) - \oline{u}^\eta(\cdot,t_i) }{\RR^d},
	\end{align*}
	and \eqref{A:Hhomog} and \eqref{E:Lipestimates} give
	\begin{align*}
		| \II |
		= \abs{ S^\epsilon_+\pars{ W_{t_{i+1}} - W_{t_i} } \oline{u}^\eta(\cdot,t_i)
		-S_+ \pars{ W_{t_{i+1}} - W_{t_i} } \oline{u}^\eta(\cdot,t_i) }
		\lesssim_L \pars{ 1 + \abs{ W_{t_{i+1}} - W_{t_i} } } \epsilon^{\beta}.
	\end{align*}
	Combining these estimates yields
	\begin{equation*}
		\nor{u^{\epsilon,\eta}(\cdot,t_{i+1}) - \oline{u}^\eta(\cdot,t_{i+1}) }{\RR^d}
		- \nor{u^{\epsilon,\eta}(\cdot,t_i) - \oline{u}^\eta(\cdot,t_i) }{\RR^d} \lesssim_L \pars{ 1 + \abs{ W_{t_{i+1}} - W_{t_i} } } \epsilon^{\beta},
	\end{equation*}
	and, hence,
	\begin{gather*}
		\nor{u^{\epsilon,\eta}(\cdot,t_N) - \oline{u}^\eta(\cdot,t_N) }{\RR^d}
		\lesssim_L \sum_{i=0}^{N-1} \pars{ 1 + \abs{ W_{t_{i+1}} - W_{t_i} } } \epsilon^{\beta}
		\lesssim_L (N + N\omega_W(\eta))\epsilon^{\beta} \\
		\lesssim_L  \pars{ \frac{T}{\eta} + \frac{T\omega_W(\eta)}{\eta}} \epsilon^{\beta}
		\lesssim_L \frac{T}{\eta} \epsilon^{\beta}.
	\end{gather*}
	For any $t \in [0,T]$, an almost identical argument gives the same bound for $\nor{ u^{\epsilon,\eta}(\cdot,t) - \oline{u}^\eta(\cdot,t)}{\RR^d}$.
	
	The one additional step in the random setting consists of controlling the propagation speed. Let $M := \max \pars{ \nor{Du^{\epsilon,\eta}}{\oo} , \nor{Du^\eta}{\oo} }$ and $\mcl L := \nor{D_p H}{B_M \times \RR^d}$, and note that, in view of \eqref{E:Lipestimates} and \eqref{A:Hbounds}, $\mathcal L \le_L 1$. Lemma \ref{L:finitespeed} yields
	\begin{align*}
		\nor{ \I}{B_R}
		\le \nor{u^{\epsilon,\eta}(\cdot,t_i) - \oline{u}^\eta(\cdot,t_i)}{B_{R + \mcl L \omega_W(\eta)} },
	\end{align*}
	while $|\II| \lesssim_L (1+R) \epsilon^\beta$ as before. Therefore,
	\begin{equation*}
		\nor{u^{\epsilon,\eta}(\cdot,t_{i+1}) - \oline{u}^\eta(\cdot,t_{i+1}) }{B_R}
		- \nor{u^{\epsilon,\eta}(\cdot,t_i) - \oline{u}^\eta(\cdot,t_i) }{B_{R+ \mcl L \omega_W(\eta)}} \lesssim_L \pars{ 1 + R } \epsilon^{\beta},
	\end{equation*}
	and we conclude that
	\begin{align*}
		\nor{u^{\epsilon,\eta}(\cdot,t_N) - \oline{u}^\eta(\cdot,t_N)}{B_T}
		\lesssim_L \sum_{i=1}^N (1+T + (i-1) \mathcal L \omega_W(\eta)) \epsilon^\beta
		\lesssim_L \frac{T^2 \omega_W(\eta)}{\eta^2} \epsilon^\beta.
	\end{align*}
\end{proofofl}

\newproof{proofoft}{Proof of Theorem \ref{T:homogenization}}

\begin{proofoft}
	We first prove the desired estimates for $u_0 \in C^{0,1}(\RR^d)$ with $\nor{Du_0}{\oo} \le L$. The homogenization for $u_0 \in BUC(\RR^d)$ then follows from a standard density argument and the contraction property of the solution operators. 
	
	In the periodic setting, for all $0 < \epsilon < \epsilon_0$ and $T,\eta > 0$,
	\begin{align*}
		\nor{u^\epsilon - u}{\RR^d \times [0,T]} \lesssim_L \omega_W(\eta) + \frac{T}{\eta} \epsilon^{\beta}.
	\end{align*}
	To optimize the above estimate, we choose $\eta$ such that $\eta\omega_W(\eta) = T \epsilon^{\beta}$.
	
	In the random setting,
	\begin{align*}
		\nor{u^\epsilon - u}{B_T \times [0,T]} 
		\lesssim_L \omega_W(\eta)\pars{ 1 + \frac{T^2}{\eta^2} \epsilon^\beta}.
	\end{align*}
	Setting $\eta = T\epsilon^{\beta/2}$ gives the optimal bound.
	\end{proofoft}

\section{Homogenization with a smooth approximating family} \label{S:oneepspath}

For some $\eta(\epsilon) > 0$ satisfying $\lim_{\epsilon \to 0} \eta(\epsilon) = 0$, we consider the piecewise linear regularization $W^{\eta(\epsilon)}$ of $W$ as in \eqref{A:Wetalinear}, and study the behavior, as $\epsilon \to 0$, of
\begin{equation}
	u^\epsilon_t + H(Du^\epsilon,x/\epsilon) \dot W^{\eta(\epsilon)}_t = 0 \quad \text{in } \RR^d \times (0,\oo), \qquad u^\epsilon(\cdot,0) = u_0 \quad \text{on } \RR^d. \label{E:singlepathmildSection}
\end{equation}

As we no longer study \eqref{E:singlepathSection} directly, it is not necessary to assume \eqref{A:wellposedeq}. Instead, we consider the weaker growth assumption \eqref{A:Hmainquant} as well as the homogenization rates \eqref{A:Hhomog}. In what follows, $R_T$, $\beta$, and $\epsilon_0$ are as in \eqref{A:Hhomog}.

We continue to assume \eqref{A:Hconvex}, although the result holds whenever $\oline{H}$ satisfies \eqref{A:Hconsistency} and \eqref{A:Hdiffconvex}.

\begin{theorem}\label{T:mildapproxhomog}
	Assume \eqref{A:Hmainquant}, \eqref{A:Hconvex}, \eqref{A:Hhomog}, $u_0 \in BUC(\RR^d)$, and
	\[
		\begin{cases}
			\lim_{\epsilon \to 0} \eta(\epsilon) = 0, & \\[1mm]
			\lim_{\epsilon \to 0} \frac{\epsilon^\beta}{\eta(\epsilon)} = 0 & \text{in the periodic case,}\\[1mm]
			\lim_{\epsilon \to 0} \frac{\abs{\log \epsilon}}{\eta(\epsilon)} = 0 & \text{in the random case.}
		\end{cases}
	\]
	If $u^\epsilon$ and $u$ are respectively the solutions of \eqref{E:singlepathmildSection} and \eqref{E:homogeqSection}, then, for all $T > 0$,
	\[
		\lim_{\epsilon \to 0} \nor{  u^\epsilon - u}{R_T \times [0,T]} = 0.
	\]
\end{theorem}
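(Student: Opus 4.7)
The plan is to imitate the triangle-inequality argument of Theorem \ref{T:homogenization}, but with the regularization scale $\eta$ tied to $\epsilon$. Since \eqref{A:Hconvex} is in force, Lemmas \ref{L:convexconsistency} and \ref{L:convexconvex} give \eqref{A:Hconsistency} and \eqref{A:Hdiffconvex}, so the homogenized problem \eqref{E:homogeqSection} is well-posed and the intermediate solution $\oline{u}^{\eta(\epsilon)}$ of
\[
	\oline{u}^{\eta(\epsilon)}_t + \oline{H}(D\oline{u}^{\eta(\epsilon)}) \dot W^{\eta(\epsilon)}_t = 0, \qquad \oline{u}^{\eta(\epsilon)}(\cdot,0) = u_0
\]
is well-defined. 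For $u_0 \in C^{0,1}(\RR^d)$ with $\nor{Du_0}{\oo} \le L$, I would decompose
\[
	\nor{u^\epsilon - u}{R_T \times [0,T]} \le \nor{u^\epsilon - \oline{u}^{\eta(\epsilon)}}{R_T \times [0,T]} + \nor{\oline{u}^{\eta(\epsilon)} - u}{R_T \times [0,T]}
\]
and show each piece vanishes in the limit.

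The second term is controlled immediately by the quantitative stability estimate in Lemma \ref{L:homogeqwellposed}, applied to the two driving paths $W^{\eta(\epsilon)}$ and $W$, giving $\lesssim_L \nor{W^{\eta(\epsilon)} - W}{[0,T]} \le \omega_{W,T}(\eta(\epsilon))$, which tends to zero since $\eta(\epsilon) \to 0$. For the first term, I would reprove Lemma \ref{L:homogenize} under the hypotheses at hand (note that well-posedness is not an obstacle here because $\dot W^{\eta(\epsilon)}$ is piecewise constant, placing both equations in the Lions--Perthame/Ishii $L^1$-time-dependent framework). The coercivity and bounded-set Lipschitz control of \eqref{A:Hmainquant} yield a uniform-in-$\epsilon$ spatial Lipschitz bound $\nor{Du^\epsilon(\cdot,t)}{\oo} \lesssim_L 1$, and the remainder of the proof of Lemma \ref{L:homogenize} carries over verbatim: iterate \eqref{A:Hhomog} across the $N \approx T/\eta(\epsilon)$ monotonicity intervals of $W^{\eta(\epsilon)}$, using the $L^\oo$-contraction of $S^\epsilon_\pm$ together with Lemma \ref{L:finitespeed} to track the spatial domain of dependence in the random case. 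The outcome is
\[
	\nor{u^\epsilon - \oline{u}^{\eta(\epsilon)}}{R_T \times [0,T]} \lesssim_L \mcl E(T,\eta(\epsilon))\,\epsilon^\beta,
\]
with $\mcl E$ as in Lemma \ref{L:homogenize}. The decay hypotheses on $\eta(\epsilon)$ are then exactly what is needed: in the periodic case $\mcl E = T/\eta(\epsilon)$, and $\epsilon^\beta/\eta(\epsilon) \to 0$ makes the bound vanish; in the random case $\mcl E \sim \omega_{W,T}(\eta(\epsilon))/\eta(\epsilon)^2$, and the stated logarithmic condition on $\eta(\epsilon)$ is calibrated to defeat the quadratic denominator.

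The extension from Lipschitz initial data to $u_0 \in \BUC(\RR^d)$ is the standard density argument: approximate $u_0$ uniformly by Lipschitz $u_0^\delta$, apply the convergence just established for each $u_0^\delta$, and close the argument using the $L^\oo$-contraction of the solution operators for \eqref{E:singlepathmildSection} (immediate from viscosity comparison) and \eqref{E:homogeqSection} (from Lemma \ref{L:homogeqwellposed}).

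The step I expect to require the most care is the adaptation of Lemma \ref{L:homogenize} without \eqref{A:wellposedeq}. The argument only needs (i) uniform spatial Lipschitz bounds for $u^\epsilon$ and $\oline{u}^{\eta(\epsilon)}$ depending on $L$ alone, (ii) the $L^\oo$-contraction of $S^\epsilon_\pm$ and $S_\pm$, and (iii) the rate \eqref{A:Hhomog}. All three are available under \eqref{A:Hmainquant} and \eqref{A:Hconvex}, but one must verify that the implicit constants do not degenerate as $\eta(\epsilon) \to 0$ and as the total variation of $W^{\eta(\epsilon)}$ blows up; this is the bookkeeping that replaces the cleaner appeal to Theorem \ref{T:extension} used in Section \ref{S:onepath}.
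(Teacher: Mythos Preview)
The gap is in your claim that \eqref{A:Hmainquant} yields a uniform-in-$\epsilon$ spatial Lipschitz bound $\nor{Du^\epsilon(\cdot,t)}{\oo} \lesssim_L 1$. This is precisely what fails once \eqref{A:wellposedeq} is dropped: the estimate \eqref{E:equicontinuity} is no longer available, and under \eqref{A:Hmainquant} alone the semigroups $S^\epsilon_\pm$ only give $\nor{D S^\epsilon_\pm(t)\phi}{\oo} \le e^{c_0}(\nor{D\phi}{\oo}+1)$ for a fixed $c_0>0$. Iterating over the $N\approx T/\eta(\epsilon)$ monotonicity intervals of $W^{\eta(\epsilon)}$ therefore produces $\nor{Du^\epsilon(\cdot,t)}{\oo} \lesssim \exp\bigl(c_0 T/\eta(\epsilon)\bigr)(L+1)$, not a constant depending on $L$ alone. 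The paper records this as Lemma \ref{L:tildeuepsLip}.

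This matters only in the random case, where the Lipschitz bound enters through the propagation speed $\hat{\mcl L}$ in Lemma \ref{L:finitespeed}. With the exponential bound, the iteration of \eqref{A:Hhomog} gives $\mcl E\bigl(T/\eta(\epsilon)\bigr)=\exp\bigl(c_0 T/\eta(\epsilon)\bigr)$ rather than the polynomial $T^2\omega_{W,T}(\eta)/\eta^2$ you imported from Lemma \ref{L:homogenize}. This is exactly why the random-case hypothesis is $\abs{\log\epsilon}/\eta(\epsilon)\to 0$: it is calibrated so that $\exp\bigl(c_0 T/\eta(\epsilon)\bigr)\epsilon^\beta\to 0$, not to ``defeat a quadratic denominator.'' Indeed, your polynomial $\mcl E$ would vanish under the much weaker condition $\epsilon^{\beta/2}/\eta(\epsilon)\to 0$, so your argument and the stated hypothesis do not match. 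In the periodic case your outline is fine, since the global $L^\oo$-contraction of $S^\epsilon_\pm$ makes the Lipschitz bound irrelevant; the paper notes this explicitly in the proof of Lemma \ref{L:smoothpathhomog}. The density argument for $u_0\in BUC(\RR^d)$ is as you describe.
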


Following the strategy outlined in Section \ref{S:strategy}, we first estimate $u^{\epsilon}-\oline{u}^{\eta(\epsilon)}$, where $\oline{u}^\eta$ solves \eqref{E:etaeq}. In the periodic case, this follows exactly as in Section \ref{S:onepath}. The argument in the random setting must be adapted to account for the fact that, since \eqref{E:equicontinuity} no longer holds, $\nor{Du^\epsilon}{\oo}$ does not have a uniform bound in $\epsilon$.

\begin{lemma} \label{L:tildeuepsLip}
	Assume \eqref{A:Hmainquant}. Then there exists $c_0 \ge 0$ such that, for all $u_0 \in C^{0,1}(\RR^d)$ and $T,\epsilon > 0$, $\nor{Du^{\epsilon}(\cdot,T) }{\oo} \le \exp\pars{ c_0 T/\eta(\epsilon)} \pars{ \nor{Du_0}{\oo} + 1}$.
\end{lemma}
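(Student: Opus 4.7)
The plan is to exploit the piecewise constancy of $\dot W^{\eta(\epsilon)}$ and iterate a per-interval Lipschitz estimate. Writing $\eta := \eta(\epsilon)$, $N := \lceil T/\eta \rceil$, and $\xi_k := (W_{(k+1)\eta} - W_{k\eta})/\eta$, on each subinterval $[k\eta, (k+1)\eta]$ the equation \eqref{E:singlepathmildSection} reduces to the time-independent Hamilton-Jacobi equation $u^\epsilon_t + \xi_k H(Du^\epsilon, x/\epsilon) = 0$. When $\xi_k < 0$ the substitution $u^\epsilon \mapsto -u^\epsilon$ together with $H(p,y) \mapsto -H(-p,y)$ converts this to the convex coercive case without disturbing \eqref{A:Hmainquant}, so I may assume each subinterval hosts a standard convex coercive Hamilton-Jacobi problem.

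Setting $L_k := \nor{Du^\epsilon(\cdot, k\eta)}{\oo}$, the technical heart of the argument is a per-interval inequality of the form $L_{k+1} + 1 \le e^{c_0}(L_k + 1)$, for some $c_0 \ge 0$ depending only on $H$ (through \eqref{A:Hmainquant}) and the fixed path $W$. Granted this, induction on $k$ yields $L_N + 1 \le e^{c_0 N}(L_0 + 1)$ with $N \le T/\eta + 1$, and absorbing the extra factor into $c_0$ gives the stated bound. Note that, since we only assume \eqref{A:Hmainquant} (and not the stronger \eqref{A:wellposedeq}), we do not have access to the strong equicontinuity estimate of Theorem \ref{T:extension} used in Lemma \ref{L:homogenize}, so a more direct argument is required.

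To establish the per-interval estimate, I would apply the doubling-of-variables technique on each subinterval: consider $\Phi(x,y,t) := u^\epsilon(x,t) - u^\epsilon(y,t) - M(t)|x-y|$ with $M(k\eta) = L_k + 1$ and $M$ an increasing function determined by an ODE. Viscosity inequalities at a putative positive maximum of $\Phi$ produce a control on $M'(t)$ involving the growth bound $|D_p H| \le C(|p|^{q-1}+1)$ from \eqref{A:Hmainquant} (for the transport term) and the spatial Lipschitz regularity of $H(p, \cdot/\epsilon)$ (for the source term). Integrating the resulting ODE over the subinterval of length $\eta$, together with the bound $|\xi_k|\eta = |\Delta W_k| \le \omega_W(\eta)$, should yield the desired multiplicative growth.

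The main obstacle lies in extracting the correct $\epsilon$-dependence: the direct doubling-of-variables estimate produces a per-interval factor of order $|\Delta W_k|/\epsilon$, which blows up under iteration. To recover instead a factor of $e^{c_0}$ bounded uniformly in $\epsilon$, the argument must exploit either the coercivity of $H$ to bootstrap from an a priori time-Lipschitz estimate $|u^\epsilon_t| \le |\dot W^\eta| \cdot \nor{H(Du^\epsilon, \cdot/\epsilon)}{\oo}$ through $H(p,y) \ge c(|p|^q - 1)$, or an averaging of the $y$-oscillations of $H(p, \cdot/\epsilon)$ over the many microscopic periods contained in a subinterval of length $\eta$ (recall $\eta \gg \epsilon^\beta$ in the periodic case and $\eta \gg |\log\epsilon|$ in the random case). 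Overcoming this technical subtlety, and in particular recovering the $1/\eta$ rather than $1/\epsilon$ scaling, is the key step of the proof.
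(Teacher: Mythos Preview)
Your iteration framework is correct and matches the paper: set $L_k := \nor{Du^\epsilon(\cdot,k\eta)}{\oo}$, prove $L_{k+1}+1 \le e^{c_0}(L_k+1)$ on each subinterval, and iterate. The issue is how you obtain the per-interval estimate.

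Your primary route, doubling variables with $M(t)|x-y|$, runs into the $1/\epsilon$ blow-up you yourself flag, and your option (b), averaging over microscopic periods, is a red herring: the lemma assumes only \eqref{A:Hmainquant}, not periodicity or ergodicity, and the paper's proof does not use the oscillation structure at all.

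Your option (a) is the right idea, but as written it is circular: the bound $|u^\epsilon_t| \le |\dot W^\eta|\,\nor{H(Du^\epsilon,\cdot/\epsilon)}{\oo}$ controls $u^\epsilon_t$ in terms of $Du^\epsilon$ at the \emph{same} time, not in terms of the gradient at the start of the subinterval. The missing step is the contraction property of $S^\epsilon_\pm$. The paper's argument on a single subinterval with initial datum $\phi$ is simply:
\[
	\nor{\partial_t S^\epsilon_\pm(\cdot)\phi}{\oo} \le \sup_y |H(D\phi,y)| \le C(\nor{D\phi}{\oo}^q+1),
\]
the first inequality coming from contraction (compare $S^\epsilon_\pm(t)\phi$ with $S^\epsilon_\pm(t+h)\phi = S^\epsilon_\pm(t)S^\epsilon_\pm(h)\phi$ and use $\nor{S^\epsilon_\pm(h)\phi-\phi}{\oo}\le h\sup_y|H(D\phi,y)|$). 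Feeding this back into the equation and using the coercivity half of \eqref{A:Hmainquant} gives
\[
	c\pars{\nor{DS^\epsilon_\pm(t)\phi}{\oo}^q - 1} \le C(\nor{D\phi}{\oo}^q+1),
\]
hence $\nor{DS^\epsilon_\pm(t)\phi}{\oo} \le e^{c_0}(\nor{D\phi}{\oo}+1)$. No $\epsilon$ appears because the constants in \eqref{A:Hmainquant} are uniform in $y$. This replaces your doubling argument entirely; there is no differential inequality for $M(t)$ and no need to absorb $|\Delta W_k|$ or $\omega_W(\eta)$.
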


\begin{proof}
	The contraction property of the semigroups $S^\epsilon_\pm(t)$ and \eqref{A:Hmainquant} imply that, for any $\phi \in C^{0,1}(\RR^d)$,
	\[
		\nor{ \frac{\del}{\del t} S^\epsilon_\pm(\cdot)\phi }{\oo} \le C \pars{ \nor{D\phi}{\oo}^q + 1}.
	\]
	Therefore, for all $t > 0$, \eqref{A:Hmainquant} gives
	\[
		c \pars{ \nor{D S^\epsilon_\pm(t) \phi}{\oo}^q - 1} \le C \pars{ \nor{D\phi}{\oo}^q + 1}, 
	\]
	and, hence, for some $c_0 \ge 0$, $\nor{D S^\epsilon_\pm(t) \phi}{\oo} \le \exp(c_0) \pars{ \nor{D\phi}{\oo} + 1}$.
	
	For $k \in \NN_0$ and $t \in [k\eta, (k+1)\eta)$,
	\[
		u^{\epsilon,\eta}(\cdot,t) = S^\epsilon_{\pm}\pars{ \abs{ W^\eta_t - W^\eta_{k\eta} } }u^{\epsilon,\eta}(\cdot,k\eta),
	\] 
	depending on the monotonicity of $W^\eta$ on $[k\eta,(k+1)\eta)$. Inductively applying this formula gives the desired estimate with a possibly larger value for $c_0$.
\end{proof}

We then have the following.

\begin{lemma} \label{L:smoothpathhomog}
	Assume \eqref{A:Hmainquant} and \eqref{A:Hhomog}. Then, for all $L > 0$, $u_0 \in C^{0,1}(\RR^d)$ with $\nor{Du_0}{\oo} \le L$, $0 < \epsilon < \epsilon_0$, and $T \ge 1$,
	\begin{equation*}
		\nor{u^{\epsilon} - \oline{u}^{\eta(\epsilon)}}{R_T \times [0,T]} \lesssim_L \mcl E\pars{ \frac{T}{\eta(\epsilon)}} \epsilon^\beta,
	\end{equation*}
	where, for $M > 0$ and some universal constant $c_0 \ge 0$,
	\begin{equation*}
		\mcl E(M) = 
		\begin{cases}
			M & \text{in the periodic setting,}\\[1mm]
			\exp\pars{c_0 M} & \text{in the random setting.}
		\end{cases}
	\end{equation*}
\end{lemma}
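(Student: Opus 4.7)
The plan is to follow the telescoping strategy of Lemma \ref{L:homogenize}, partitioning $[0,T]$ into $N$ intervals of length $\eta(\epsilon)$ on which $\dot W^{\eta(\epsilon)}$ is constant, with $t_i = i\eta(\epsilon)$ for $i = 0, 1, \ldots, N$. On each interval $[t_i, t_{i+1}]$, the solution $u^\epsilon$ evolves by $S^\epsilon_\pm$ over time $\abs{W_{t_{i+1}} - W_{t_i}}$, and $\oline{u}^{\eta(\epsilon)}$ evolves by $S_\pm$ over the same time, with the sign determined by the monotonicity of $W^{\eta(\epsilon)}$. I decompose the one-step error $u^\epsilon(\cdot, t_{i+1}) - \oline{u}^{\eta(\epsilon)}(\cdot, t_{i+1}) = \I + \II$ exactly as in Lemma \ref{L:homogenize}: the term $\I$ is controlled by the contraction property of $S^\epsilon_\pm$ (together with finite speed of propagation in the random case), and $\II$ is controlled by the homogenization rate \eqref{A:Hhomog}.

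The key stabilizing observation for $\II$ is that $\oline{u}^{\eta(\epsilon)}(\cdot, t_i)$ retains a Lipschitz constant $\lesssim_L 1$ uniformly in $i$, since \eqref{E:homogeqSection} is spatially homogeneous and its solution operator contracts in $C^{0,1}$. Hence the constant $C_L$ appearing in \eqref{A:Hhomog}, which depends on the Lipschitz constant of the data $\oline{u}^{\eta(\epsilon)}(\cdot, t_i)$, does not degrade over the iteration. In the periodic setting, \eqref{A:Hhomog} controls the error on all of $\RR^d$, so the recursion telescopes verbatim to $\sum_{i=0}^{N-1} C_L \pars{1 + \abs{W_{t_{i+1}} - W_{t_i}}} \epsilon^\beta \lesssim_L (N + N \omega_{W,T}(\eta(\epsilon))) \epsilon^\beta \lesssim_L (T/\eta(\epsilon))\, \epsilon^\beta$, matching the periodic bound in Lemma \ref{L:homogenize}.

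In the random setting, two complications arise. First, \eqref{A:Hhomog} holds only on balls, so finite propagation speed must be tracked. Second, and more seriously, the uniform Lipschitz bound \eqref{E:equicontinuity} is no longer available, and Lemma \ref{L:tildeuepsLip} only gives $\nor{Du^{\epsilon}(\cdot, t_i)}{\oo} \lesssim_L \exp(c_0 i)$, so the growth bound \eqref{A:Hmainquant} on $D_p H$ forces the propagation speed $\mcl L_i = \nor{D_p H}{B_{M_i} \times \RR^d}$ to grow exponentially in $i$ as well. Iterating the recursion backwards from step $N$ (on the ball $B_T$) to step $0$, the ball radius at step $i$ inflates to $r_i \lesssim_L T + \omega_{W,T}(\eta(\epsilon)) \sum_{j \ge i} \mcl L_j \lesssim_L T + \omega_{W,T}(\eta(\epsilon)) \exp(c_0 N)$ via the geometric sum, and starting from $a_0 = 0$ I accumulate a total error $\lesssim_L N \pars{1 + r_0 + \omega_{W,T}(\eta(\epsilon))}\epsilon^\beta$. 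Absorbing the polynomial factors in $N$, $T$, and $\omega_{W,T}(\eta(\epsilon))$ into the exponential at the cost of a slightly larger $c_0$ then yields the stated bound $\exp(c_0 T/\eta(\epsilon))\,\epsilon^\beta$.

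The main obstacle is precisely this exponential blowup of $\nor{Du^\epsilon}{\oo}$ in the random case: through \eqref{A:Hmainquant} it inflates the propagation speed, and through finite speed it inflates the spatial domain over which \eqref{A:Hhomog} must be applied. What saves the argument is that $\oline{u}^{\eta(\epsilon)}$ remains Lipschitz with a uniform constant, so the multiplicative prefactor $C_L$ in \eqref{A:Hhomog} does not compound across iterations, and only the additive $(1+R)$ term from \eqref{A:Hhomog} feels the exponential expansion of the ball radius. This explains why the admissible scaling of $\eta(\epsilon)$ in Theorem \ref{T:mildapproxhomog} is restricted to the logarithmic regime $\abs{\log\epsilon}/\eta(\epsilon) \to 0$.
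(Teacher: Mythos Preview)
Your proposal is correct and follows essentially the same approach as the paper. The only cosmetic difference is that the paper bounds the propagation speed once and for all by setting $M := \max(\nor{Du_0}{\oo}, \nor{Du^{\epsilon,\eta}}{\oo})$ and $\hat{\mcl L} := \nor{D_p H}{B_M \times \RR^d} \lesssim_L \exp(c_0 T/\eta(\epsilon))$ via Lemma \ref{L:tildeuepsLip}, rather than tracking step-dependent speeds $\mcl L_i$; since your geometric sum $\sum_{j} \mcl L_j$ is absorbed into the same exponential anyway, the two bookkeeping schemes are equivalent.
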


\begin{proof} 
	The proof in the periodic case is exactly as in Lemma \ref{L:homogenize}, and therefore does not use Lemma \ref{L:tildeuepsLip}. Indeed, the growth assumption \eqref{A:Hmainquant} may be replaced with \eqref{A:Hmain}.
	
	We then turn to the random setting. Set $M := \max \pars{ \nor{Du_0}{\oo},  \nor{Du^{\epsilon,\eta}}{\oo} }$ and $\hat{\mcl L} := \nor{D_p H}{B_M \times \RR^d}$. In view of \eqref{A:Hmainquant} and Lemma \ref{L:tildeuepsLip}, for some $c_0 \ge 0$, $\hat{\mcl L} \le \exp(c_0 T \eta\nv) \pars{ \nor{Du_0}{\oo}^{q-1} + 1}$.
	
	With the notation of Lemma \ref{L:homogenize}, the same argument gives
	\[
		\nor{u^\epsilon(\cdot,t_{i+1}) - \oline{u}^{\eta(\epsilon)}(\cdot,t_{i+1})}{B_R}
		- \nor{u^\epsilon(\cdot,t_i) - \oline{u}^{\eta(\epsilon)}(\cdot,t_i)}{B_{R + \hat{\mcl L} \omega_{W,T}(\eta)}}
		\lesssim_L (1+R)\epsilon^\beta
	\]
	and so
	\[
		\nor{u^\epsilon(\cdot,t_N) - \oline{u}^{\eta(\epsilon)}(\cdot,t_N)}{B_T} \lesssim_L \sum_{k=0}^{N-1} \pars{ 1 + T + k \hat{ \mcl L} \omega_{W,T}(\eta)} \epsilon^\beta.
	\]
	We take $N = [T/\eta]$ and increase the value of $c_0$ as necessary to obtain the result.
\end{proof}

\newproof{proofoftt}{Proof of Theorem \ref{T:mildapproxhomog}}

\begin{proofoftt}
	Let $u_0^\delta$ be a standard mollification of $u_0$, so that $\nor{u_0 - u_0^\delta}{\oo} \le \omega_{u_0}(\delta)$. We combine the bounds from Lemmas \ref{L:homogeqwellposed} and \ref{L:smoothpathhomog} for the initial condition $u_0^\delta$, as well as the contraction property for \eqref{E:singlepathmildSection} and \eqref{E:homogeqSection}, to obtain
	\[
		\nor{ u^\epsilon - u}{R_T \times [0,T]} - 2 \omega_{u_0}(\delta) \lesssim_\delta
		\begin{cases}
			\frac{T}{\eta(\epsilon)} \epsilon^\beta + \omega_{W,T}(\eta(\epsilon)) & \text{in the periodic setting,}\\[1mm]
			\exp(c_0 T/\eta(\epsilon)) \epsilon^\beta + \omega_{W,T}(\eta(\epsilon)) & \text{in the random setting.}
		\end{cases}
	\]
	In either case, the assumption on $\eta(\epsilon)$ guarantees that
	\[
		\limsup_{\epsilon \to 0} \nor{ u^\epsilon- u}{R_T \times [0,T]} \le 2 \omega_{u_0}(\delta) \xrightarrow{\delta \to 0} 0.
	\]
\end{proofoftt}

We conclude with some examples in the random setting to show that it is possible to improve $\eta(\epsilon)$. 

First suppose that $c = C$ in \eqref{A:Hmainquant}. This is satisfied, for instance, by $H(p,y) = |p|^q + f(y)$ with $f \in C^{0,1}(\RR^d)$. Using this in the proof of Lemma \ref{L:tildeuepsLip}, we find, for all $0 \le t \le T$, 
\[
	\nor{ D S^\epsilon_\pm(t) \phi}{\oo} \le \nor{D\phi}{\oo} + 2 \quad \text{and} \quad \nor{Du^{\epsilon,\eta}}{\oo} \le \nor{Du_0}{\oo} + 2 \frac{T}{\eta},
\]
and, with the notation of Lemma \ref{L:smoothpathhomog}, replacing $\mcl E(M) = \mcl E(T/\eta)$ with $\mcl E(T,\eta)$, 
\[
	\hat{\mcl L} \lesssim  \nor{Du_0}{\oo}^{q-1} + \pars{\frac{T}{\eta}}^{q-1}
\quad \text{and} \quad
	\mcl E(T,\eta) = \max \pars{ \frac{T^2}{\eta^2}, \frac{T^q}{\eta^q} \omega_{W,T}(\eta) }.
\]
If $W$ is $\alpha$-H\"older continuous, for instance, it then suffices to have
\[
	\lim_{\epsilon \to 0} \frac{\epsilon^\sigma}{\eta(\epsilon)} = 0, \quad \text{where} \quad \sigma = \min \pars{ \frac{\beta}{2} , \frac{\beta}{q-\alpha}}.
\]

In the second example, we assume $q = 1$, so that the propagation speed $\hat{\mcl L}$ is uniformly bounded. Then, exactly as in Section \ref{S:onepath}, Lemma \ref{L:smoothpathhomog} yields
\[
	\mcl E(T,\eta) = \frac{T^2}{\eta^2} \omega_{W,T}(\eta).
\]
Therefore, for $\alpha$-H\"older continuous $W$, it suffices to have $\lim_{\epsilon \to 0} \epsilon^{\frac{\beta}{2-\alpha}}\eta(\epsilon)\nv = 0$.

\section{Multiple paths: the blow-up example} \label{S:multiplepathblowup}
We again take $\eta(\epsilon)$ satisfying $\lim_{\epsilon \to 0} \eta(\epsilon) = 0$, and, for $f: \RR^d \to \RR^m$, $W \in C([0,\oo), \RR^m)$, and $u_0 \in BUC(\RR^d)$, we study the behavior, as $\epsilon \to 0$, of
\begin{equation}
		u^\epsilon_t + \abs{Du^\epsilon} + \sum_{i=1}^m f^i(x/\epsilon) \dot W^{i,\eta(\epsilon)}_t = 0 \quad \text{in } \RR^d \times (0,\oo), \qquad u^\epsilon(\cdot,0) = u_0. \label{E:multdbadcase}
\end{equation}
This is a special case of \eqref{E:maineqbadcase} with $M = m+1$, $H(p,y) = |p|$, and $\dot W^M \equiv 1$.

The path $W$ is assumed to have unbounded variation on every interval and in every direction. More precisely, we assume that
\begin{equation}
	\begin{cases}
	\text{there exist $0 < c < C$ and $0 < \theta < 1$ such that, for all $T > 0$, $t \in [0,T]$, $\xi \in \RR^m$,} &\\[1mm]
	\text{and $\eta > 0$,} \quad c |\xi| \eta^{-\theta}t \le \int_0^t \abs{ \dot W^\eta_s \cdot \xi}\;ds \quad \text{and} \quad
	\int_0^t \abs{ \dot W^\eta_s}\;ds \le C\eta^{-\theta}t.&
	\end{cases} \label{A:Winfvar}
\end{equation}
When $m = 1$, a Cantor-like construction can be used to build such a path. Brownian motion also satisfies \eqref{A:Winfvar} almost surely for $0 < \theta < \frac{1}{2}$ and for some subsequence $\eta_n$ with $\lim_{n \to \oo} \eta_n = 0$.

We assume that $f$ satisfies either \eqref{A:aperiodic} or \eqref{A:Hstatergod}, as well as
\begin{equation}
	Df \in UC(\RR^d) \text{ and, for some $0 < c_0 \le C_0$, } c_0 \le \nor{Df}{\oo} \le C_0.
	\label{A:fnonconstant}
\end{equation}
In the random setting \eqref{A:Hstatergod}, we will also need to assume that the modulus of continuity for $Df$ is uniform over $f \in \mbf \Omega$.

\begin{theorem} \label{T:multdbadcase}
	In addition to \eqref{A:Winfvar}, \eqref{A:fnonconstant}, and $u_0 \in BUC(\RR^d)$, assume that $f$ satisfies one of \eqref{A:aperiodic} or \eqref{A:Hstatergod}, and, for $\theta$ as in \eqref{A:Winfvar} and $0 < \sigma < \frac{1}{1-\theta}$, set $\eta(\epsilon) = \epsilon^\sigma$. If $u^\epsilon$ solves \eqref{E:multdbadcase}, then, for all $t > 0$,
	\[
		\limsup_{\epsilon \to 0} \sup_{x \in \RR^d}  u^\epsilon(x,t) \epsilon^{\sigma \theta - (\sigma - 1)_+} \lesssim -t.
	\]
\end{theorem}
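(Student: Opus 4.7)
The plan is to use the explicit control representation of the solution of \eqref{E:multdbadcase} and exhibit a test trajectory along which $u^\epsilon$ drops at the predicted rate. Since $H(p) = |p|$ corresponds to controlled dynamics $|\dot\gamma|\le 1$ with zero Lagrangian, and $\dot W^{\eta(\epsilon)}$ is piecewise constant in time, iterating the classical Hopf-Lax formula for $u_t + |Du| + g(x,t) = 0$ across the subintervals $I_k := [k\eta(\epsilon), (k+1)\eta(\epsilon)]$ on which $\dot W^{\eta(\epsilon)}_s \equiv \xi_k$ gives
\[
	u^\epsilon(x,t) = \inf_{\substack{|\dot\gamma|\le 1\\ \gamma(t) = x}} \Bigl\{ u_0(\gamma(0)) - \int_0^t f(\gamma(s)/\epsilon)\cdot \dot W^{\eta(\epsilon)}_s\,ds \Bigr\}.
\]
To prove the theorem it therefore suffices, for each small $\epsilon$ and each $x$, to exhibit an admissible path $\gamma_\epsilon$ along which the integral is of order at least $t\,\epsilon^{(\sigma-1)_+ - \sigma\theta}$.

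The construction of $\gamma_\epsilon$ splits into two regimes depending on whether the maximum microscopic displacement $\eta(\epsilon)/\epsilon = \epsilon^{\sigma-1}$ during one $I_k$ exceeds or falls below unity. In the regime $\sigma\le 1$, $\gamma_\epsilon/\epsilon$ can traverse a full periodic cell (or a shrinking-but-still-macroscopic ball, in the random case) during each $I_k$, so I would have $\gamma_\epsilon$ spend a transit time of order $\epsilon$ at the start of $I_k$ moving to a point $y_k^\star$ at which $y\mapsto f(y)\cdot\xi_k$ attains its maximum, and then remain there for the balance of the subinterval. In the regime $\sigma > 1$, the motion of $\gamma_\epsilon/\epsilon$ is confined to a ball of shrinking radius $\epsilon^{\sigma-1}$ per $I_k$, so I would instead perform a linear gradient ascent: on $I_k$ move $\gamma_\epsilon$ at unit speed in a direction $v_k \in \RR^d$ aligned with $Df(\gamma_\epsilon(k\eta(\epsilon))/\epsilon)^\top \xi_k$, so that a Taylor expansion of $f$ around $\gamma_\epsilon(k\eta(\epsilon))/\epsilon$ yields a linearized ascent of magnitude $\tfrac{1}{2}\eta(\epsilon)^2 \epsilon^{-1} |Df^\top \xi_k|$.

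Summing the per-subinterval contributions over $k = 0, 1, \ldots, \lfloor t/\eta(\epsilon) \rfloor$ and applying the two bounds of \eqref{A:Winfvar} then completes the estimate. In the first regime the dominant term is $\sum_k \eta(\epsilon)\max_y [f(y)\cdot\xi_k] \gtrsim \sum_k \eta(\epsilon)|\xi_k| = \int_0^t |\dot W^{\eta(\epsilon)}_s|\,ds \gtrsim \eta(\epsilon)^{-\theta} t = \epsilon^{-\sigma\theta}t$, while the accumulated transit cost $\lesssim \epsilon \sum_k |\xi_k| \lesssim \epsilon\,\eta(\epsilon)^{-1-\theta} t$ is subdominant by a factor $\epsilon/\eta(\epsilon)\to 0$. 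In the second regime the ``zero-order'' piece telescopes to $f(y_0)\cdot W_t^{\eta(\epsilon)} = O(1)$, while the ascent bonus is $\gtrsim \eta(\epsilon)\epsilon^{-1}\int_0^t |\dot W^{\eta(\epsilon)}|\,ds \gtrsim \eta(\epsilon)^{1-\theta} t/\epsilon = \epsilon^{-(1-\sigma(1-\theta))}t$. The two rates agree at $\sigma = 1$ and jointly produce the exponent $(\sigma-1)_+ - \sigma\theta$ in the statement.

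The main obstacle is to justify the two non-degeneracy lower bounds used above: (i) the oscillation bound $\max_y f(y)\cdot\xi \gtrsim |\xi|$ uniformly in $\xi\in\RR^m$ for $\sigma\le 1$, and (ii) the directional-gradient bound $|Df(y)^\top \xi|\gtrsim |\xi|$ along the ascent trajectory for $\sigma > 1$. The first is a global feature of the medium: in the periodic case it would follow from a Poincar\'e inequality on the torus applied to $f\cdot\hat\xi$ after subtracting its mean, together with the lower bound in \eqref{A:fnonconstant}; in the random case one invokes the ergodic theorem applied uniformly across $\hat\xi\in S^{m-1}$, using the uniformity of the modulus of continuity of $Df$ over $f\in\mathbf{\Omega}$. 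The second requires that $Df$ remain non-degenerate along the entire ascent trajectory; given the uniform continuity of $Df$ and $\|Df\|_\infty\ge c_0$, this reduces by a compactness argument in $\hat\xi = \xi/|\xi|$ to showing that $\{y : |Df(y)^\top\hat\xi|\le c\}$ does not cover the shrinking ball of radius $\epsilon^{\sigma-1}$ around $y_k$ for $c$ sufficiently small. Finally, freedom in the choice of $\gamma_\epsilon(0)$ together with a short endpoint-matching segment ensures $\gamma_\epsilon(t) = x$ exactly, without disrupting the estimates, giving the required uniformity in $x$.
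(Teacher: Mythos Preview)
Your overall plan---use the control representation and build a test path interval by interval---is in the same spirit as the paper's, but both non-degeneracy claims (i) and (ii) are false under \eqref{A:fnonconstant}, and this is a genuine gap. The assumption $c_0 \le \nor{Df}{\oo}$ only guarantees $Df(y_0)p \ne 0$ for \emph{some} $y_0$ and unit $p$; it does not prevent $y \mapsto f(y)\cdot\hat\xi$ from being identically constant for some directions $\hat\xi$. For instance, with $m=2$, $f(y) = (\sin 2\pi y_1, 0)$ satisfies \eqref{A:aperiodic} and \eqref{A:fnonconstant}, yet $\max_y f(y)\cdot\hat\xi = 0 = \abs{Df(y)^\top\hat\xi}$ for $\hat\xi = e_2$; no Poincar\'e or compactness argument can manufacture the missing oscillation. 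There is a second, independent gap in your $\sigma > 1$ construction: a one-directional ascent on each $I_k$ lets the microscopic position $y_k := \gamma_\epsilon(k\eta)/\epsilon$ drift by order $N\eta/\epsilon = t/\epsilon \to \infty$, so the zero-order sum $\sum_k \eta\, f(y_k)\cdot\xi_k$ does \emph{not} telescope to $O(1)$---summation by parts only bounds it by $O(1) + \nor{Df}{\oo}\nor{W}{[0,t]}\, t/\epsilon$, which swamps your first-order gain $\eta^{1-\theta}t/\epsilon$.

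The paper repairs both issues with a single construction. It fixes once and for all $y_0$ and a unit $p$ with $\xi := Df(y_0)p$, $\abs{\xi} \ge c_0$, and uses a \emph{tent} path: on $I_k$ the microscopic position runs from $y_0$ to $y_0 + \delta\, p\, \sgn(\xi\cdot\xi_k)$ and back, with amplitude $\delta = \nu\,\epsilon^{(\sigma-1)_+}$ (this unifies your two regimes). Returning to $y_0$ at the end of every $I_k$ kills the drift, so the zero-order piece is exactly $f(y_0)\cdot(W^\eta_{N\eta} - W^\eta_0) = O(1)$. The first-order gain is $\gtrsim \delta\int_0^{N\eta}\abs{\xi\cdot\dot W^\eta_s}\,ds \gtrsim \delta\,\abs{\xi}\,\eta^{-\theta}t$ by the \emph{first} inequality of \eqref{A:Winfvar} applied to this single fixed $\xi$---no uniformity in $\hat\xi$ is ever needed. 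The Taylor remainder is at most $\delta\,\omega_{Df}(\delta)\int\abs{\dot W^\eta}$, controlled by the second inequality of \eqref{A:Winfvar} and absorbed by taking $\nu$ small. Finally, uniformity in $x\in\RR^d$ is obtained not by an endpoint segment to an arbitrary $x$, but by first proving the bound for $x \in B_{r\epsilon}$ (so the final segment from $\epsilon y_0$ to $x$ has admissible speed) and then invoking the $\epsilon\ZZ^d$-periodicity of $u^\epsilon$, or stationarity in the random case.
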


The fact that $u^\epsilon$ diverges to $-\oo$ is a consequence of the positive coercivity of $p \mapsto |p|$. If $|Du^\epsilon|$ is replaced with $-|Du^\epsilon|$ in \eqref{E:multdbadcase}, then $u^\epsilon$ diverges to $+\oo$.

\newproof{proofofttt}{Proof of Theorem \ref{T:multdbadcase}}

\begin{proofofttt}
	Without loss of generality, we take $u_0 \equiv 0$, since, if $\tilde u^\epsilon$ solves \eqref{E:multdbadcase} with $\tilde u^\epsilon(\cdot,0) \equiv 0$, then, in view of the comparison principle, $\nor{u^\epsilon - \tilde u^\epsilon}{\oo} \le \nor{u_0}{\oo}$. 
	
	We first prove that, for all $r,t> 0$,
	\begin{equation}
		\limsup_{\epsilon \to 0} \sup_{x \in B_{r \epsilon}} u^\epsilon(x,t) \epsilon^{\sigma \theta - (\sigma - 1)_+} \lesssim -t. \label{E:prelimblowup}
	\end{equation}
	Taking $r = \sqrt{d}$ gives the result in the periodic setting, since $u^\epsilon$ is $\epsilon \ZZ^d$-periodic and $B_{\sqrt{d} \epsilon} \supset \epsilon [0,1]^d$. In the random setting, the stationarity of $u^\epsilon$ and \eqref{E:prelimblowup} imply that, almost surely and for all $t > 0$,
	\[
		\limsup_{\epsilon \to 0} \sup_{x \in \QQ^d} u^\epsilon(x,t) \epsilon^{\sigma \theta - (\sigma - 1)_+} \lesssim -t. 
	\]
	The supremum may then be taken over $x \in \RR^d$ because $u^\epsilon$ is continuous.	
	
	The proof of \eqref{E:prelimblowup} relies on the variational formula
	\begin{equation}
	u^\epsilon(x,t) = - \sup \left\{  \int_0^t f \pars{ \frac{\gamma_s}{\epsilon} } \cdot \dot W^{\eta(\epsilon)}_s \;ds : \gamma \in W^{1,\oo}([0,t], \RR^d), \;\nor{\dot \gamma}{\oo} \le 1, \; \gamma_t = x \right\}. \label{E:controlformula}
\end{equation}

	Let $\omega := \omega_{Df}$ be the modulus of continuity for $Df$. It follows from \eqref{A:fnonconstant} that there exist $y_0, p \in \RR^d$ with $|p| = 1$ and $\phi: \RR^d \to \RR^m$ such that  $\xi := Df(y_0)p$ satisfies $c_0 \le |\xi| \le C_0$, $|\phi(z)| \le \omega(|z|)$ for all $z \in \RR^d$, and, for all $y \in \RR^d$,
	\[
		f(y) = f(y_0) + Df(y_0) \cdot (y-y_0) + |y-y_0| \phi(y-y_0).
	\]
	
	Set $R := \frac{r + |y_0|}{t}$ and choose $\bar\epsilon \in (0,\frac{1}{2R}]$. Note that, for all $0 < \epsilon < \bar\epsilon$, we have $N := \left[ \frac{(1 -R\epsilon)t}{\eta} \right] \in \NN$ and $N\eta \gtrsim t$.
	
	Choose $0 < \nu < \frac{1}{2}$, set $\delta := \nu \epsilon^{(\sigma-1)_+}$, and define $\alpha: [0,1] \to \RR$ by $\alpha(s) := 1 - 2|s-1/2|$. For $x \in B_{r\epsilon}$ and $k = 0, 1, 2, \ldots, N-1$, we define $\gamma: [0,t] \to \RR^d$ by
	\[
		\frac{\gamma_s}{\epsilon} := 
		\begin{cases}
			y_0 + \delta \alpha\pars{ \frac{s-k\eta}{\eta} } p & \text{if } s \in [k\eta,(k+1)\eta] \text{ and } \xi \cdot \dot W^\eta_s > 0, \\[1mm]
			y_0 - \delta \alpha\pars{ \frac{s-k\eta}{\eta} } p & \text{if } s \in [k\eta,(k+1)\eta] \text{ and } \xi \cdot \dot W^\eta_s < 0, \\[1mm]
			y_0 + \frac{s - N\eta}{t - N\eta} \pars{ \frac{x}{\epsilon} - y_0} & \text{if } x \in [N\eta,t].
		\end{cases}
	\]
	Then $\gamma$ is admissible for \eqref{E:controlformula}, since $\gamma_t = x$ and
	\[
		\nor{\dot \gamma}{\oo} \le \max \pars{ 2 \frac{\delta\epsilon}{\eta}, \frac{|x-\epsilon y_0|}{t - N\eta}} \le 1.
	\]
	
	We now calculate
	\begin{gather*}
		\int_0^{N\eta} f \pars{ \frac{\gamma_s}{\epsilon} } \cdot \dot W^\eta_s \;ds  - f(y_0) \cdot \pars{W^\eta_t - W^\eta_0}\\
		= \int_0^{N\eta} \left[ Df(y_0) \cdot \pars{ \frac{\gamma_s}{\epsilon} - y_0} + \abs{ \frac{\gamma_s}{\epsilon} - y_0} \phi\pars{ \frac{\gamma_s}{\epsilon} - y_0} \right] \cdot \dot W^\eta_s\;ds
		= \delta \sum_{k=0}^{N-1} \pars{ \I_k + \II_k},
	\end{gather*}
	where
	\begin{gather*}
		\I_k := \int_{k\eta}^{(k+1)\eta} \alpha \pars{ \frac{s - k\eta}{\eta} } \abs{ \xi \cdot \dot W^\eta_s}\;ds
		\quad \text{and} \\
		\II_k := \int_{k\eta}^{(k+1)\eta} \alpha \pars{ \frac{k-s \eta}{\eta} } \phi \pars{ \sgn\pars{ \xi \cdot \dot W^\eta_s} \delta \alpha \pars{ \frac{s-k\eta}{\eta} } p } \cdot \dot W^\eta_s \;ds.
	\end{gather*}
	
	For each $k = 0,1,2,\ldots,N-1$, the function $\xi_k := \xi \cdot \dot W^\eta$ is constant on $[k\eta, (k+1)\eta]$. Hence, \eqref{A:Winfvar} gives
	\begin{gather*}
		\sum_{k=0}^{N-1} \I_k
		=\sum_{k=0}^{N-1} \int_{k\eta}^{(k+1)\eta} \alpha \pars{ \frac{s - k\eta}{\eta} } \abs{ \xi \cdot \dot W^\eta_s}\;ds
		= \sum_{k=0}^{N-1} |\xi_k| \int_{k\eta}^{(k+1)\eta} \alpha \pars{ \frac{s-k\eta}{\eta}}\;ds \\
		= \frac{1}{2} \sum_{k=0}^{N-1} |\xi_k|\eta 
		= \frac{1}{2} \int_0^{N\eta} |\dot W^\eta_s \cdot \xi|\;ds 
		\gtrsim \eta(\epsilon)^{-\theta} t,
	\end{gather*}
	and
	\begin{gather*}
		\sum_{k=0}^{N-1} \II_k
		= \sum_{k=0}^{N-1} \int_{k\eta}^{(k+1)\eta} \alpha \pars{ \frac{s - k \eta}{\eta} } \phi \pars{ \sgn\pars{ \xi \cdot \dot W^\eta_s} \delta \alpha \pars{ \frac{s-k\eta}{\eta} } p } \cdot \dot W^\eta_s \;ds\\
		\ge - \omega(\delta) \sum_{k=0}^{N-1} \int_{k\eta}^{(k+1)\eta} \alpha \pars{ \frac{s - k\eta}{\eta} } \abs{ \dot W^\eta_s}\;ds
		\gtrsim - \omega(\delta) \eta(\epsilon)^{-\theta}t.
	\end{gather*}
	Choosing $\nu$ so that $\omega(\delta)$ is sufficiently small, we find
	\begin{gather*}
		\int_0^{N\eta} f \pars{ \frac{\gamma_s}{\epsilon} } \cdot \dot W^\eta_s \;ds  - f(y_0) \cdot \pars{W^\eta_t - W^\eta_0}
		\gtrsim \delta \eta(\epsilon)^{-\theta} t
		\gtrsim \epsilon^{-\sigma \theta + (\sigma-1)_+} t.
	\end{gather*}
	Integrating by parts on $[N\eta,t]$ gives
	\begin{gather*}
		\int_{N\eta}^{t} f \pars{ \frac{\gamma_s}{\epsilon} } \cdot \dot W^\eta_s \;ds  - f(y_0) \cdot \pars{W^\eta_t - W^\eta_{N\eta}}
		= f\pars{ \frac{x}{\epsilon} } W^\eta_t - \frac{1}{\epsilon} \int_{N\eta}^t Df \pars{ \frac{\gamma_s}{\epsilon}} \dot \gamma_s \cdot W^\eta_s\;ds,
	\end{gather*}
	and so, because $\nor{\dot \gamma}{\oo} \le 1$,
	\begin{gather*}
		\abs{\int_{N\eta}^{t} f \pars{ \frac{\gamma_s}{\epsilon} } \cdot \dot W^\eta_s \;ds  - f(y_0) \cdot \pars{W^\eta_t - W^\eta_{N\eta}}}
		\le \oline{M},
	\end{gather*}
	where $\oline M := \pars{\nor{f}{\oo} + (r+|y_0|) \nor{Df}{\oo}} \nor{W}{[0,t]}$.
	Choosing $\bar \epsilon$ sufficiently small, we conclude that, for all $0 < \epsilon < \bar \epsilon$,
	\begin{gather*}
		u^\epsilon(x,t) \le - \int_0^t f \pars{ \frac{\gamma_s}{\epsilon}} \cdot \dot W^\eta_s\;ds
		\lesssim  -\epsilon^{-\sigma \theta + (\sigma - 1)_+} t + M \lesssim -\epsilon^{-\sigma \theta + (\sigma-1)_+} t.
	\end{gather*}
\end{proofofttt}

Theorem \ref{T:multdbadcase} remains true if $H(p) = |p|$ is replaced with a Hamiltonian that grows at least linearly in $|p|$, since, in view of the comparison principle, we may reduce the problem to the study of \eqref{E:multdbadcase}.

The merit of directly studying the control formula \eqref{E:controlformula} is that we may avoid a discussion of homogenization error estimates. On the other hand, if, for each fixed $\xi \in \RR^m$, such an estimate is known for the homogenization of
\begin{equation}
	U^\epsilon_t + |DU^\epsilon| + f(x/\epsilon) \cdot \xi = 0, \label{E:xieq}
\end{equation}
then one may use the strategy of Section \ref{S:strategy} to deduce, as $\epsilon \to 0$, that $u^\epsilon \to -\oo$. This involves an analysis of the effective Hamiltonian $\oline{H}(p,\xi)$ associated to \eqref{E:xieq}.

\begin{lemma} \label{L:oscf}	
	Assume $f$ satisfies either \eqref{A:aperiodic} or \eqref{A:Hstatergod} and is not constant. Then there exist $\mu > 0$ and $b,\xi^* \in \RR^m$ with $|\xi^*| = 1$ such that, for all $(p,\xi) \in \RR^d \times \RR^m$, $\oline{H}(p,\xi) \ge b \cdot \xi + \mu |\xi \cdot \xi^*|$.
\end{lemma}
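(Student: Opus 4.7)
The plan is to exploit the variational formula \eqref{E:barHformula} for the effective Hamiltonian of the $\xi$-parametrized problem \eqref{E:xieq}. For each fixed $\xi \in \RR^m$, the Hamiltonian $H_\xi(p,y) := |p| + f(y) \cdot \xi$ is convex in $p$ and coercive, so \eqref{E:barHformula} together with the argument used in Lemma \ref{L:convexconsistency} gives
\begin{equation*}
	\oline{H}(p, \xi) = \inf_{v \in \mcl G} \esssup_{y \in \RR^d} \pars{ \abs{p + D_y v(y)} + f(y) \cdot \xi }.
\end{equation*}
Since $\abs{p + D_y v(y)} \ge 0$ almost everywhere, for every admissible $v$ the essential supremum exceeds $\esssup_y f(y) \cdot \xi$, so taking the infimum over $v$ yields
\begin{equation*}
	\oline{H}(p, \xi) \ge \esssup_{y \in \RR^d} f(y) \cdot \xi = \sup_{x \in K} x \cdot \xi,
\end{equation*}
where $K \subset \RR^m$ is the range of $f$ in the periodic setting, and the topological support of the law of $f(0)$ in the random setting. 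In the random case the identity holds almost surely with the right-hand side deterministic by stationarity and ergodicity.

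Next I would extract two distinct deterministic values $x_1 \ne x_2$ from $K$. In the periodic setting, \eqref{A:fnonconstant} gives $\nor{Df}{\oo} \ge c_0 > 0$, which directly prohibits $f$ from being constant. In the random setting, if $K$ were a single point $x_0$, then stationarity would force $f(y) = x_0$ almost surely for every fixed $y$, and a Fubini argument combined with the almost sure continuity of $f$ would yield $f \equiv x_0$ almost surely, contradicting the bound $\nor{Df}{\oo} \ge c_0$. In either case $K$ contains some pair $x_1 \ne x_2$ and $\sup_{x \in K} x \cdot \xi \ge \max\pars{x_1 \cdot \xi, x_2 \cdot \xi}$.

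To finish, set
\begin{equation*}
	b := \frac{x_1 + x_2}{2}, \qquad \xi^* := \frac{x_1 - x_2}{\abs{x_1 - x_2}}, \qquad \mu := \frac{\abs{x_1 - x_2}}{2} > 0,
\end{equation*}
all independent of $(p,\xi)$, and apply the elementary identity $\max(a,b) = \frac{a+b}{2} + \frac{\abs{a-b}}{2}$ to obtain
\begin{equation*}
	\max\pars{x_1 \cdot \xi, x_2 \cdot \xi} = \frac{(x_1 + x_2) \cdot \xi}{2} + \frac{\abs{(x_1 - x_2) \cdot \xi}}{2} = b \cdot \xi + \mu \abs{\xi \cdot \xi^*},
\end{equation*}
which combined with the earlier lower bound on $\oline{H}(p,\xi)$ yields the claim. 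The one mildly delicate step is transferring $\nor{Df}{\oo} \ge c_0$ into the non-degeneracy of $K$ in the stationary ergodic setting, and that is the point I would verify most carefully.
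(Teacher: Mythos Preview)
Your proof is correct and follows essentially the same strategy as the paper: both arguments reduce to the lower bound $\oline{H}(p,\xi) \ge \sup_{y} f(y)\cdot\xi$, then pick two distinct values $x_1 = f(y_1)$, $x_2 = f(y_2)$ and use the identity $\max(a,b) = \tfrac{a+b}{2} + \tfrac{|a-b|}{2}$ to produce $b$, $\xi^*$, and $\mu$. The only real difference is in how the lower bound is obtained. You invoke the variational formula \eqref{E:barHformula} and drop the nonnegative term $|p+D_y v|$, whereas the paper uses the approximate corrector $\gamma v^\gamma + |p+D_y v^\gamma| + f(y)\cdot\xi = 0$, notes that $-\gamma v^\gamma(y) \ge f(y)\cdot\xi$ pointwise, and passes to the limit $-\gamma v^\gamma \to \oline{H}(p,\xi)$. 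These are two standard routes to the same inequality; your use of the variational formula is slightly more direct, and your care in the random setting (working with the deterministic support $K$ of the law of $f(0)$ so that $b,\xi^*,\mu$ come out deterministic) is a nice touch that the paper leaves implicit.
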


Lemma \ref{L:oscf} yields that the solution $\oline{u}^\eta$ of $\oline{u}^\eta_t + \oline{H}\pars{ D \oline{u}^\eta, \dot W^{\eta(\epsilon)}_t } = 0$ satisfies
\[
	\oline{u}^\eta(x,t) - u_0(x) + b \cdot \pars{ W^{\eta(\epsilon)}_t - W^{\eta(\epsilon)}_0} + \mu \int_0^t \abs{ W^{\eta(\epsilon)}_s \cdot \xi^*}\;ds \le 0.
\]
Therefore, as $\epsilon \to 0$, $\oline{u}^{\eta(\epsilon)} \to -\oo$. Choosing $\eta(\epsilon)$ so as to control the homogenization error estimate for $u^\epsilon - \oline{u}^\eta$, we find $u^\epsilon \to -\oo$ as $\epsilon \to 0$.

\newproof{proofofll}{Proof of Lemma \ref{L:oscf}}

\begin{proofofll}
	Let $y_1, y_2 \in \RR^d$ be such that $f(y_1) \ne f(y_2)$, and set
	\[
		b := \frac{f(y_1)+f(y_2)}{2}, \quad
		\xi^* := \frac{f(y_1) - f(y_2)}{|f(y_1) - f(y_2)|}, \quad \text{and} \quad
		\mu := \frac{|f(y_1) - f(y_2)|}{2}.
	\]
	Let $v^\gamma = v^\gamma(y; p, \xi)$ be the unique solution of the approximate corrector equation
	\[
		\gamma v^\gamma + \abs{p + D_y v^\gamma} + {f}(y) \cdot \xi = 0.
	\]
	Arguments from the classical viscosity theory give, for all $y \in \RR^d$,
	\[
		-\gamma v^\gamma(y;p, \xi) \ge b \cdot \xi + ({f}(y) - b) \cdot \xi.
	\]
	As $\gamma \to 0$, $-\gamma v^\gamma \to \oline{H}(p,\xi)$ locally uniformly in $y$, and so
	\begin{gather*}
		\oline{H}(p,\xi) \ge b \cdot \xi +  \max_{y \in \RR^d} ({f}(y) - b) \cdot \xi \ge  b \cdot \xi +  \max_{y = y_1,y_2} ({f}(y) - b) \cdot \xi 
		= b \cdot \xi + \mu |\xi \cdot \xi^*|.
	\end{gather*}
\end{proofofll}

\section{Multiple paths: the convergence example} \label{S:multiplepathconvergence}
For $H: \RR^d \times \RR^d \to \RR$, $f: \RR^d \to \RR$, a probability space $(\Omega, \mcl F, \mbb P)$, and $W^{1,\epsilon},W^{2,\epsilon}: [0,\oo) \times \Omega \to \RR$, we study the behavior of the initial value problem
\begin{equation}
		u^\epsilon_t + H(Du^\epsilon,x/\epsilon) \dot W^{1,\epsilon}_t + f(x/\epsilon) \dot W^{2,\epsilon}_t = 0 \quad \text{in }\RR^d \times (0,\oo),
		\qquad u^\epsilon(\cdot,0) = 0 \quad \text{on } \RR^d. \label{E:littleeq}
\end{equation}
The randomness generated by $(\Omega, \mcl F, \mbb P)$ is not related to the random homogenization setting discussed in Section \ref{S:assumptions}. Indeed, here we consider only periodic $H(p, \cdot)$ and $f$.

We consider the metric space $X := C([0,\oo), \RR)$ with the metric
\begin{equation}
		d(f,g) := \sum_{n=1}^\oo 2^{-n} \max \pars{ 1, \nor{f - g}{[0,n]} }.\label{E:metric}
\end{equation}
A measurable random variable $u : \Omega \to X$ gives rise to the Borel measure $u^* \PP$ on $X$ defined by $(u^* \PP)(A) := \PP\pars{ u \in A}$ for all Borel sets $A \subset X$. The object of this section is to show that, for all $x \in \RR^d$, as $\epsilon \to 0$, $\mu_x^\epsilon := u^\epsilon(x,\cdot)^* \PP$ converges weakly to the Wiener measure $\oline\mu$, under certain assumptions on $W^{1,\epsilon}$ and $W^{2,\epsilon}$. More precisely, we show that, for all $x \in \RR^d$ and bounded continuous $\phi: X \to \RR$,
\[
	\lim_{\epsilon \to 0} \int_X \phi \; d \mu_x^\epsilon = \int_X \phi \; d\oline\mu.
\]

Let $\{ X^1_k, X^2_k \}_{k =0}^\oo : \Omega \to \RR$ be measurable, independent random variables such that, for all $k \in \NN_0$,
\begin{equation}
	\begin{cases}
		\PP( X^{1}_k > 0) = \PP(X^{1}_k < 0), &\\[1mm]
		\left\{ X^{2}_k \right\}_{k=0}^\oo \text{ are identically distributed and } \EE \abs{ X^{2}_k}^2 = 1, \text{ and} & \\[1mm]
		\esssup_{\Omega} \sup_{k \in \NN_0} \abs{ \frac{ X^{2}_k}{ X^{1}_k} } < \oo. & 
	\end{cases} \label{A:randomwalks}
\end{equation}
We once again choose $\eta = \eta(\epsilon) > 0$ satisfying $\lim_{\epsilon \to 0} \eta(\epsilon) = 0$. For all $i = 1,2$, $k \in \NN_0$, and $t \in [k\eta, (k+1)\eta]$, we then set
\begin{equation}
	W^{i,\epsilon}_0 := 0 \quad \text{and} \quad W^{i,\epsilon}_t := W^{i,\epsilon}_{k\eta} + \frac{t - k\eta}{\eta^{1/2}} X^i_k. \label{A:randompaths}
\end{equation}

We also assume \eqref{A:Hmain}, and
\begin{equation}
	H \ge 0, \quad H(0,\cdot) = 0, \quad f \in BUC(\RR^d), \quad \text{and} \quad \max_{\RR^d} f = - \min_{\RR^d} f = 1. \label{A:Handf}
\end{equation}
Observe that, in view of the comparison principle, $u^\epsilon: \Omega \to BUC(\RR^d \times [0,T])$ is measurable, and, hence, the measures $\mu^\epsilon_x$ are well-defined.

\begin{theorem}\label{T:littleresult}
	Assume $W^{1,\epsilon}$ and $W^{2,\epsilon}$ are given by \eqref{A:randompaths}, $H$ and $f$ satisfy \eqref{A:Hmain}, \eqref{A:aperiodic} and \eqref{A:Handf}, and $\lim_{\epsilon \to 0} \frac{\epsilon}{\eta(\epsilon)} = 0$. Then, for all $x \in \RR^d$, as $\epsilon \to 0$, $\mu^\epsilon_x$ converges weakly to $\oline{\mu}$.
\end{theorem}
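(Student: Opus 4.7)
The plan is to combine periodic homogenization on each time interval where $(\dot W^{1,\epsilon}, \dot W^{2,\epsilon})$ is constant with Donsker's invariance principle. Write $\xi^i_k := X^i_k / \sqrt{\eta}$. On $[k\eta, (k+1)\eta]$ the driving pair equals the $\omega$-dependent constant $(\xi^1_k, \xi^2_k)$, so $u^\epsilon$ solves the ordinary Hamilton-Jacobi equation with periodic coefficient $G_k(p,y) := \xi^1_k H(p,y) + \xi^2_k f(y)$. Let $\bar u^\eta$ be the iteratively homogenized function governed on each interval by $\bar u^\eta_t + \bar G_{(\xi^1_k, \xi^2_k)}(D\bar u^\eta) = 0$, where $\bar G_\xi$ is the effective Hamiltonian of $\xi^1 H + \xi^2 f$ (interpreted via the pathwise framework when $\xi^1 < 0$ and the Hamiltonian is anti-coercive). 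Applying \eqref{A:Hhomog} interval-by-interval, using the contraction property of the solution semigroups to additively propagate the errors, and exploiting the assumption $\epsilon/\eta(\epsilon) \to 0$ gives $\nor{u^\epsilon - \bar u^\eta}{\RR^d \times [0,T]} \to 0$ locally uniformly.

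Because $u_0 \equiv 0$ and $\bar G_\xi(0)$ does not depend on $x$, an easy induction shows $\bar u^\eta(\cdot, t) \equiv \bar u^\eta(t)$ is spatially constant, and the positive $1$-homogeneity $\bar G_{s\xi}(0) = s\bar G_\xi(0)$ (for $s > 0$) gives
\[
\bar u^\eta(t) = -\sqrt{\eta} \sum_{k=0}^{\lfloor t/\eta \rfloor - 1} Y_k + o(1), \qquad Y_k := \bar G_{(X^1_k, X^2_k)}(0).
\]
The identification $Y_k = \sgn(X^1_k)\,|X^2_k|$ then follows from the hypotheses $H \ge 0$, $H(0, \cdot) = 0$, $\max f = -\min f = 1$. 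For $\xi^1 > 0$, the test corrector $v \equiv 0$ in the inf-sup formula \eqref{E:barHformula} produces the upper bound $\bar G_{(\xi^1,\xi^2)}(0) \le \sup_y \xi^2 f(y) = |\xi^2|$, and evaluating at a maximizer $y^*$ of $\xi^2 f$ (where $H(Dv(y^*), y^*) \ge 0$ forces $\sup_y[H(Dv, y) + \xi^2 f] \ge |\xi^2|$) gives the matching lower bound. For $\xi^1 < 0$, the equation is recast via $v := -u^\epsilon$, which satisfies the coercive problem $v_t + |\xi^1| H(-Dv, y/\epsilon) - \xi^2 f(y/\epsilon) = 0$; the analogous inf-sup argument yields an increment of $+\eta|\xi^2|$ per interval, and hence $\bar G_{(\xi^1, \xi^2)}(0) = -|\xi^2|$.

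By \eqref{A:randomwalks}, $\{Y_k\}$ are i.i.d.\ with $\EE Y_k = \EE\sgn(X^1_k) \cdot \EE|X^2_k| = 0$ and $\EE Y_k^2 = \EE|X^2_k|^2 = 1$. Donsker's invariance principle, applied in the space $(X, d)$ of \eqref{E:metric}, then gives that the piecewise-linear process $-\sqrt{\eta} \sum_{k < \lfloor \cdot/\eta \rfloor} Y_k$ converges in distribution to $-B$, which by the symmetry of Brownian motion has the same law as $B$ under Wiener measure $\bar\mu$. Combined with the homogenization estimate from the first step, this yields the weak convergence $\mu^\epsilon_x \to \bar\mu$ for every $x \in \RR^d$.

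The principal obstacle is the identification of $\bar G_\xi(0)$ when $\xi^1 < 0$: the Hamiltonian $\xi^1 H + \xi^2 f$ is then anti-coercive in $p$ and classical periodic homogenization does not apply directly, so one must use the pathwise viscosity framework to justify the passage to the coercive problem for $-u^\epsilon$ and to confirm that \eqref{A:Hhomog} controls the error there, with constants uniform in the random $\xi^i_k$. A secondary difficulty is quantitative: the spatial Lipschitz constant of $u^\epsilon(\cdot, t)$ is not uniformly bounded in $\epsilon$ (cf.\ Lemma~\ref{L:tildeuepsLip}), so it must be tracked carefully through the iteration in order to make the compounded homogenization error genuinely of order $T\epsilon^\beta/\eta(\epsilon)$ and vanish under $\epsilon/\eta(\epsilon) \to 0$.
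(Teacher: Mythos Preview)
Your overall strategy matches the paper's: identify the effective constant on each interval as $\sgn(\xi_1)|\xi_2|$, iterate the interval-wise homogenization error using contraction, and then invoke Donsker's theorem. However, there is a genuine quantitative gap, and the difficulties you flag are not the real ones.

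The gap is in your appeal to \eqref{A:Hhomog}. That assumption gives an error of order $\epsilon^\beta$ (with $\beta = 1/3$ in the periodic setting), so the compounded error after $T/\eta$ intervals is $T\epsilon^\beta/\eta(\epsilon)$, as you note at the end. But the hypothesis of the theorem is only $\epsilon/\eta(\epsilon) \to 0$, so you need the sharp rate $\beta = 1$, which \eqref{A:Hhomog} does not provide. The paper closes this gap by first computing the effective constant \emph{explicitly} (Lemma~\ref{L:Hbarandcorrectors}), observing that the corrector is uniformly bounded (with bound depending only on $|\xi_2/\xi_1|$), and then deducing directly an $O(\epsilon)$ homogenization rate for constant initial data (Lemma~\ref{L:localtimeconv}). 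The uniform bound on $|\xi_2/\xi_1|$ from the third line of \eqref{A:randomwalks} makes this estimate uniform in $k$ and $\omega$.

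Conversely, the two obstacles you highlight are not actually present. For $\xi^1 < 0$, there is no need for the pathwise framework: multiplying the cell equation $\xi_1 H(Dv,y) + \xi_2 f(y) = \lambda$ by $-1$ gives a coercive cell problem $|\xi_1| H(Dv,y) - \xi_2 f(y) = -\lambda$, and your own computation applies verbatim to yield $-\lambda = |\xi_2|$. And the Lipschitz tracking of Lemma~\ref{L:tildeuepsLip} is irrelevant here: at each step the paper compares $S^\epsilon_\xi(t - k\eta)$ applied to the \emph{constant} $W^\epsilon_{k\eta}$ with another constant, so the initial datum for the homogenization estimate always has Lipschitz constant zero, and only the contraction property is needed to propagate the previous error (see Lemma~\ref{L:globaltimeconv}).
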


The result depends on an explicit formula, for fixed $\xi_1,\xi_2 \in \RR$, for the effective constant $\lambda = \lambda(\xi,\xi_2)$ of
\begin{equation}
	H(D_y v, y)\xi_1 + f(y) \xi_2 = \lambda. \label{E:littlecorrectoreq}
\end{equation}
That is, we determine the unique constant $\lambda$ for which \eqref{E:littlecorrectoreq} admits periodic solutions.

\begin{lemma}\label{L:Hbarandcorrectors}
	Assume $H$ and $f$ satisfy \eqref{A:Hmain}, \eqref{A:aperiodic}, and \eqref{A:Handf}. Then, for any $\xi_1, \xi_2 \in \RR$ with $\xi_1 \ne 0$, there exists a periodic solution $v = v(y; \xi_1,\xi_2)$ of \eqref{E:littlecorrectoreq} with $\lambda = \sgn(\xi_1) |\xi_2|$ satisfying $\nor{v}{\oo} + \nor{D_y v}{\oo} \lesssim_{\abs{\xi_2/\xi_1}} 1$.
\end{lemma}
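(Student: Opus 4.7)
The plan is to recast the equation as a standard periodic cell problem for a convex, coercive Hamiltonian, use the variational formula from \eqref{E:barHformula} to pin down the effective constant exactly, and then extract the quantitative bounds from the coercivity in \eqref{A:Hmain} and periodicity.

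If $\xi_2 = 0$, the constant $v \equiv 0$ solves the equation with $\lambda = 0 = \sgn(\xi_1)|\xi_2|$, so I may assume $\xi_2 \ne 0$. Set $\tilde H(p, y) := H(p, y) \xi_1 + f(y) \xi_2$. When $\xi_1 > 0$, $\tilde H$ is convex and coercive in $p$, so the Lions-Papanicolaou-Varadhan theorem produces a unique constant $\oline{\tilde H}(0)$ and a continuous periodic viscosity solution $v$ of $\tilde H(D_y v, y) = \oline{\tilde H}(0)$. To identify this constant, I apply \eqref{E:barHformula}: testing with $v \equiv 0$ and using $H(0, \cdot) = 0$ together with \eqref{A:Handf} gives
\[
\oline{\tilde H}(0) \le \sup_{y \in \RR^d} \tilde H(0, y) = \sup_{y \in \RR^d} f(y)\xi_2 = |\xi_2|.
\]
For the matching lower bound, for every $v \in \mcl G$ and every $y$ one has $\tilde H(D_y v(y), y) \ge \min_p \tilde H(p, y) = f(y)\xi_2$, using $H \ge 0$ and $\xi_1 > 0$; taking $\sup_y$ and then $\inf_v$ yields $\oline{\tilde H}(0) \ge \sup_y f(y)\xi_2 = |\xi_2|$. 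Hence $\lambda = |\xi_2| = \sgn(\xi_1)|\xi_2|$.

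When $\xi_1 < 0$, the same reasoning is applied to the convex coercive Hamiltonian $-\tilde H(p,y) = H(p,y)(-\xi_1) - f(y)\xi_2$. It produces a periodic viscosity solution $v$ of $-\tilde H(D_y v, y) = \oline{(-\tilde H)}(0)$, and the identical inf-sup calculation gives $\oline{(-\tilde H)}(0) = \sup_y (-f(y)\xi_2) = |\xi_2|$. Multiplying a viscosity equation by $-1$ simply interchanges the sub- and super-solution conditions, so the same $v$ also solves $\tilde H(D_y v, y) = -|\xi_2| = \sgn(\xi_1)|\xi_2|$ in the viscosity sense.

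Finally, in both cases the equation rewrites in the viscosity sense as $H(D_y v, y) = (\lambda - f(y)\xi_2)/\xi_1 \in [0, 2|\xi_2/\xi_1|]$ pointwise, so the coercivity in \eqref{A:Hmain} forces $\|D_y v\|_\oo \lesssim_{|\xi_2/\xi_1|} 1$; normalizing $v(0) = 0$ and using periodicity then controls $\|v\|_\oo$ by the Lipschitz constant of $v$ times the diameter of the unit cell. I expect the main conceptual point to be the necessity of applying \eqref{E:barHformula} to the convex form ($\tilde H$ when $\xi_1 > 0$, $-\tilde H$ when $\xi_1 < 0$), since that formula characterizes the effective constant only in the convex case---this is precisely the same sign sensitivity for viscosity inequalities that motivates the discussion in Section \ref{S:strategy}.
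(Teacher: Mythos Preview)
Your identification of $\lambda$ when $\xi_1>0$ is essentially correct, but two issues deserve comment.

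First, you assert that $\tilde H$ is convex, yet the hypotheses \eqref{A:Hmain}, \eqref{A:aperiodic}, \eqref{A:Handf} do not include \eqref{A:Hconvex}. This is harmless for existence, since the Lions--Papanicolaou--Varadhan theory in the periodic setting requires only coercivity, and the inf--sup formula \eqref{E:barHformula} likewise holds there without convexity. Your lower bound $\oline{\tilde H}(0)\ge|\xi_2|$ via the pointwise inequality $\tilde H(p,y)\ge f(y)\xi_2$ is morally right, but since the supremum in \eqref{E:barHformula} is interpreted in the viscosity sense, you should justify it by noting that if some periodic $v$ were a viscosity subsolution of $\tilde H(Dv,y)\le\mu$ with $\mu<|\xi_2|$, then near the maximizer $y^*$ of $f\xi_2$ the subsolution test would force $H(D\phi,y)<0$ at a dense set of points, contradicting $H\ge 0$.

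The genuine gap is in the case $\xi_1<0$. Your claim that ``multiplying a viscosity equation by $-1$ simply interchanges the sub- and super-solution conditions'' is false. If $v$ is a viscosity solution of $G(Dv,y)=c$ (so $G\le c$ at maxima of $v-\phi$ and $G\ge c$ at minima), then the viscosity solution notion for $-G(Dv,y)=-c$ asks for $G\ge c$ at maxima and $G\le c$ at minima: these are \emph{opposite} requirements, not the same with roles swapped. Thus your corrector for the coercive problem $-\tilde H(Dv,y)=|\xi_2|$ is in general \emph{not} a viscosity solution of \eqref{E:littlecorrectoreq} with $\lambda=-|\xi_2|$. One correct route is to set $w=-v$: then $v$ solves $\tilde H(Dv,y)=\lambda$ if and only if $w$ solves $\check H(Dw,y)|\xi_1|-f(y)\xi_2=-\lambda$ with $\check H(p,y):=H(-p,y)$, which is coercive and satisfies \eqref{A:Handf}; applying your $\xi_1>0$ argument to this problem yields $-\lambda=|\xi_2|$.

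By contrast, the paper's proof avoids the inf--sup formula entirely. After citing the standard existence, it obtains $\lambda\ge|\xi_2|$ (for $\xi_1>0$) from $H\ge 0$, and then reads off $\lambda\le|\xi_2|$ by evaluating the supersolution condition at a minimum $y_0$ of the periodic corrector, where $0\in D^-v(y_0)$ forces $f(y_0)\xi_2\ge\lambda$. This direct touching argument is both shorter and immune to the sign confusion that tripped up your $\xi_1<0$ case.
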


\begin{proof}
	The existence, for some unique $\lambda$, of a periodic $v$ solving \eqref{E:littlecorrectoreq} and satisfying the desired bounds is standard (see, for example, \cite{LPV} or \cite{E}). Here, we only prove that $\lambda = \sgn(\xi_1) |\xi_2|$.
	
	 Assume first that $\xi_1 > 0$ and note that, without loss of generality, we may take $\xi_1 = 1$. It follows from \eqref{A:Handf} that $\lambda \ge \max_{y \in \RR^d} ( f(y) \xi_2 ) = | \xi_2|$. Since $v$ is periodic, $v$ attains a minimum at some $y_0 \in \RR^d$. This gives $0 + f(y_0) \xi_2 \ge \lambda$, and, therefore, $\lambda = |\xi_2|$. A similar argument yields $\lambda = -|\xi_2|$ if $\xi_1 < 0$.
\end{proof}

The bounds for the corrector $v$ in Lemma \ref{L:Hbarandcorrectors} and an argument as in \cite{LPV} result in a rate for the homogenization, as $\epsilon \to 0$, of
\begin{equation}
	U^\epsilon_t + H(DU^\epsilon,x/\epsilon)\xi_1 + f(x/\epsilon)\xi_2 = 0 \quad \text{in } \RR^d \times (0,\oo), \qquad U^\epsilon(\cdot,0) = 0 \quad \text{on } \RR^d. \label{E:littlexieq}
\end{equation}

\begin{lemma} \label{L:localtimeconv}
	Assume $H$ and $f$ satisfy \eqref{A:aperiodic} and \eqref{A:Handf}. Then, for all $\xi_1, \xi_2 \in \RR$ with $\xi_1 \ne 0$, 
	\[
		\sup_{(x,t) \in \RR^d \times [0,\oo)} \abs{ U^\epsilon(x,t) + \sgn(\xi_1)|\xi_2| \cdot t} \lesssim_{\abs{\xi_2/\xi_1}} \epsilon.
	\]
\end{lemma}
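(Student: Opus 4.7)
\textbf{Proof proposal for Lemma \ref{L:localtimeconv}.}

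The plan is to apply the perturbed test function method of Evans \cite{E}, using the corrector from Lemma \ref{L:Hbarandcorrectors} to build an explicit $\epsilon$-perturbation of the conjectured limit that solves the oscillatory equation exactly.

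First, I would fix $\xi_1,\xi_2 \in \RR$ with $\xi_1 \ne 0$, let $v(\cdot;\xi_1,\xi_2)$ be the $1$-periodic corrector given by Lemma \ref{L:Hbarandcorrectors}, and set the $\epsilon$-scaled corrector
\[
	v^\epsilon(x) := \epsilon\, v\pars{ x/\epsilon;\, \xi_1,\xi_2}, \qquad \nor{v^\epsilon}{\oo} \le \epsilon \nor{v}{\oo} \lesssim_{\abs{\xi_2/\xi_1}} \epsilon.
\]
Denote by $U(x,t) := -\sgn(\xi_1)|\xi_2| \cdot t$ the conjectured limit, and define
\[
	\Phi^\epsilon(x,t) := U(x,t) + v^\epsilon(x).
\]
Since $D \Phi^\epsilon(x,t) = D_y v(x/\epsilon)$ and $\Phi^\epsilon_t = -\sgn(\xi_1)|\xi_2|$, the corrector equation $H(D_y v,y)\xi_1 + f(y)\xi_2 = \sgn(\xi_1)|\xi_2|$ from Lemma \ref{L:Hbarandcorrectors} gives that $\Phi^\epsilon$ is an exact viscosity solution of
\[
	\Phi^\epsilon_t + H(D\Phi^\epsilon,x/\epsilon)\xi_1 + f(x/\epsilon)\xi_2 = 0 \quad \text{in } \RR^d \times (0,\oo),
\]
which is precisely the equation solved by $U^\epsilon$ in \eqref{E:littlexieq}.

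Second, I would compare $U^\epsilon$ and $\Phi^\epsilon$ using the comparison principle for the above Hamilton-Jacobi equation (which applies because $H$ satisfies \eqref{A:Hmain}, the equation is independent of $u$, and the forcing $f(x/\epsilon)\xi_2$ is bounded and continuous in $x$). Since
\[
	\nor{U^\epsilon(\cdot,0) - \Phi^\epsilon(\cdot,0)}{\oo} = \nor{v^\epsilon}{\oo} \lesssim_{\abs{\xi_2/\xi_1}} \epsilon,
\]
the contraction property yields $\nor{U^\epsilon - \Phi^\epsilon}{\RR^d \times [0,\oo)} \lesssim_{\abs{\xi_2/\xi_1}} \epsilon$. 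The triangle inequality
\[
	\abs{ U^\epsilon(x,t) + \sgn(\xi_1)|\xi_2|\cdot t} = \abs{U^\epsilon - U} \le \abs{U^\epsilon - \Phi^\epsilon} + \nor{v^\epsilon}{\oo} \lesssim_{\abs{\xi_2/\xi_1}} \epsilon
\]
then gives the claim uniformly in $(x,t) \in \RR^d \times [0,\oo)$.

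There is no real obstacle: once Lemma \ref{L:Hbarandcorrectors} supplies a Lipschitz, $L^\infty$-bounded corrector with bounds quantified in $|\xi_2/\xi_1|$, the rest is the standard observation that the $\epsilon$-scaled corrector makes the ansatz solve the oscillating equation exactly. The mildest technical point to verify is the applicability of the comparison principle to \eqref{E:littlexieq} in its viscosity sense; this is routine under \eqref{A:Hmain} and \eqref{A:Handf} since $f(x/\epsilon)\xi_2$ is uniformly continuous in $x$ and the Hamiltonian is independent of the unknown $U^\epsilon$.
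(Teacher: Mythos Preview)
Your proposal is correct and is precisely the argument the paper has in mind: it states only that the lemma follows from the corrector bounds in Lemma~\ref{L:Hbarandcorrectors} ``and an argument as in \cite{LPV},'' which is exactly the construction you carry out---the $\epsilon$-scaled corrector $\Phi^\epsilon(x,t)=-\sgn(\xi_1)|\xi_2|\,t+\epsilon v(x/\epsilon)$ is an exact viscosity solution of \eqref{E:littlexieq}, and comparison with $U^\epsilon$ at $t=0$ yields the $O(\epsilon)$ bound uniformly in $(x,t)$. The only cosmetic remark is that the paper attributes the idea to \cite{LPV} rather than \cite{E}, but in this simple situation (smooth limit, exact solution after perturbation) the two coincide.
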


In view of Lemmas \ref{L:Hbarandcorrectors} and \ref{L:localtimeconv}, if $W^{1,\epsilon}$ and $W^{2,\epsilon}$ are viewed as fixed paths independent of $\epsilon$, then the solution $u^\epsilon$ of \eqref{E:littleeq} formally homogenizes to
\[
	W^\epsilon_t := - \int_0^t \sgn\pars{\dot W^{1,\epsilon}_s} \abs{\dot W^{2,\epsilon}_s}\;ds.
\]
Here, $W^\epsilon$ takes the role of the solution $\oline{u}^\eta$ of \eqref{E:generaletaeq} in Section \ref{S:strategy}.

Observe that $W^\epsilon$ is piecewise linear with step size $\eta(\epsilon)$. Moreover, \eqref{A:randomwalks} yields that, for $k \in \NN_0$, the random variables
\[
	\delta W^\epsilon_k := W^\epsilon_{(k+1)\eta} - W^\epsilon_{k\eta}
	= - \eta(\epsilon)^{1/2} \sgn(X^1_k) \abs{X^2_k}
\]
are independent, identically distributed, and satisfy
\[
	\EE\left[ \delta W^{\epsilon}_k \right] = 0
	\quad \text{and} \quad
	\EE \abs{ \delta W^\epsilon_k }^2 = \eta(\epsilon).
\]
Donsker's invariance principle (see Billingsley \cite{B}) implies that, as $\epsilon \to 0$, $W^\epsilon$ converges in law to a Brownian motion, that is, $(W^\epsilon)^*\PP$ converges weakly to the Wiener measure $\oline{\mu}$. In order to obtain the same conclusion for $\mu^\epsilon_x$ and prove Theorem \ref{T:littleresult}, it is necessary to connect the behavior of $u^\epsilon$ to $W^\epsilon$.

An iterative application of Lemma \ref{L:localtimeconv} yields the following estimates for $u^\epsilon-W^\epsilon$. The proof is similar to those for Lemmas \ref{L:homogenize} and \ref{L:smoothpathhomog}.

\begin{lemma} \label{L:globaltimeconv}
	Assume $W^{1,\epsilon}$ and $W^{2,\epsilon}$ are given by \eqref{A:randompaths}, and $H(p,\cdot)$ and $f$ satisfy \eqref{A:aperiodic} and \eqref{A:Handf}. Then, almost surely, for all $T \ge 1$ and $\epsilon > 0$,
	\[
		\nor{ u^\epsilon - W^\epsilon }{\RR^d \times [0,T]} \lesssim \frac{T}{\eta(\epsilon)}\epsilon.
	\]
\end{lemma}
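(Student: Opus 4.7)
The strategy mirrors that of Lemmas \ref{L:homogenize} and \ref{L:smoothpathhomog}, but substitutes the uniform-in-time homogenization rate from Lemma \ref{L:localtimeconv} for the rate \eqref{A:Hhomog}. The crucial simplification is that, by Lemma \ref{L:Hbarandcorrectors}, the effective ``Hamiltonian'' $(p,\xi) \mapsto \sgn(\xi_1)|\xi_2|$ is independent of $p$, so on each interval of constancy of $\dot W^{1,\epsilon}$ and $\dot W^{2,\epsilon}$ the homogenized flow merely translates the solution by a spatial constant, namely the corresponding increment of $W^\epsilon$.

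Concretely, I partition $[0,T]$ into $N = \lceil T/\eta(\epsilon)\rceil$ intervals $I_k = [k\eta, (k+1)\eta]$ on which $\dot W^{i,\epsilon}$ takes the constant value $\xi^{i,\epsilon}_k := X^i_k/\eta^{1/2}$, and set $E_k := \nor{u^\epsilon(\cdot, k\eta) - W^\epsilon_{k\eta}}{\RR^d}$. I will prove by induction that $E_{k+1} \le E_k + C\epsilon$ for a deterministic constant $C$, from which $E_N \lesssim T\epsilon/\eta(\epsilon)$ follows since $E_0 = 0$.

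On each $I_k$ introduce $v^k$, the solution of the same PDE as $u^\epsilon$ but with the spatially constant initial datum $W^\epsilon_{k\eta}$ at time $k\eta$. Since neither $H$ nor $f$ involves $u^\epsilon$ directly, the shifted function $(x,s) \mapsto v^k(x, k\eta + s) - W^\epsilon_{k\eta}$ solves exactly the equation of Lemma \ref{L:localtimeconv} with $\xi_1 = \xi^{1,\epsilon}_k$, $\xi_2 = \xi^{2,\epsilon}_k$, and zero initial datum. Evaluating the resulting estimate at $s = \eta$ and recognising that
\[
    \sgn(\xi^{1,\epsilon}_k)|\xi^{2,\epsilon}_k|\,\eta = \eta^{1/2}\sgn(X^1_k)|X^2_k| = W^\epsilon_{k\eta} - W^\epsilon_{(k+1)\eta},
\]
I obtain $\nor{v^k(\cdot, (k+1)\eta) - W^\epsilon_{(k+1)\eta}}{\RR^d} \lesssim \epsilon$. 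The standard viscosity $L^\infty$-contraction on $I_k$, applied to the two initial data $u^\epsilon(\cdot, k\eta)$ and $W^\epsilon_{k\eta}$, then gives $\nor{u^\epsilon(\cdot, (k+1)\eta) - v^k(\cdot, (k+1)\eta)}{\RR^d} \le E_k$, and the triangle inequality closes the induction. The same comparison applied at intermediate $t \in I_k$ with $\eta$ replaced by $t - k\eta \le \eta$ extends the bound to all of $[0,T]$.

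The only delicate point is that the implicit constant in Lemma \ref{L:localtimeconv} depends on $|\xi^{2,\epsilon}_k/\xi^{1,\epsilon}_k| = |X^2_k/X^1_k|$. The third line of \eqref{A:randomwalks} is precisely what guarantees that this ratio is almost surely uniformly bounded over $k$, so $C$ is deterministic and independent of both $k$ and $\epsilon$; this is exactly what is required for the almost sure estimate $T\epsilon/\eta(\epsilon)$ and is the main reason the hypothesis was imposed.
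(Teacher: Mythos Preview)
Your proof is correct and follows essentially the same approach as the paper: your auxiliary function $v^k$ is exactly what the paper writes as $S^\epsilon_\xi(t-k\eta)W^\epsilon_{k\eta}$, and your decomposition via the triangle inequality into a contraction term and a homogenization-error term coincides with the paper's splitting $\I + \II$. You also correctly identify the role of the third line of \eqref{A:randomwalks} in making the implicit constant from Lemma~\ref{L:localtimeconv} uniform in $k$.
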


\begin{proof} For $\xi = (\xi_1,\xi_2) \in \RR^2$ and $t > 0$, let $S^\epsilon_\xi(t): BUC(\RR^d) \to BUC(\RR^d)$ be the solution operator for \eqref{E:littlexieq}, and for $k \in \NN_0$, set $\xi_1 = \eta(\epsilon)^{-1/2} X^1_k$ and $\xi_2 = \eta(\epsilon)^{-1/2} X^2_k$. Then, for all $t \in [k\eta,(k+1)\eta)$,
	\[
		u^\epsilon(\cdot,t) = S^\epsilon_\xi(t - k\eta)u^\epsilon(\cdot,k\eta) \quad \text{and} \quad W^\epsilon_t = W^\epsilon_{k\eta} - \sgn(\xi_1) |\xi_2|(t - k\eta).
	\]
	We then have $u^\epsilon(\cdot,t) - W^\epsilon_t = \I + \II$, where
	\[
		\I := S^\epsilon_\xi(t - k\eta)u^\epsilon(\cdot,k\eta) - S^\epsilon_\xi(t - k\eta) W^\epsilon_{k\eta}
	\]
	and
	\[
		\II := S^\epsilon_\xi(t - k\eta)(W^\epsilon_{k\eta}) - W^\epsilon_{k\eta} + \sgn(\xi_1)|\xi_2|(t - k\eta).
	\]
	The contraction property for $S^\epsilon_\xi$ gives $|\I| \le \nor{u^\epsilon(\cdot,k\eta) - W^\epsilon_{k\eta}}{\RR^d}$. Meanwhile, since $S^\epsilon_\xi$ commutes with constants, Lemma \ref{L:localtimeconv} and the third line of \eqref{A:randomwalks} almost surely yield
	\[
		|\II| \le \abs{ S^\epsilon_\xi(t-k\eta)(0) - \sgn(\xi_1)|\xi_2|(t - k\eta) } \lesssim_{\abs{ \xi_2/\xi_1}} \epsilon \lesssim \epsilon.
	\]
	An iteration gives the result exactly as in the proof of Lemma \ref{L:homogenize}.
\end{proof}

Finally, we need the following.

\begin{lemma} \label{L:weakconv}
	Let $w^{1,\epsilon}, w^{2,\epsilon} : \Omega \to X$ be measurable, where $(\Omega, \mcl F, \PP)$ and $(X,d)$ are respectively a probability and metric space, and, for $i = 1,2$, set $\mu^{i,\epsilon} := (w^{i,\epsilon})^* \PP$. Assume that, for some deterministic $c(\epsilon)$ satisfying $\lim_{\epsilon \to 0} c(\epsilon) = 0$, $\PP\left[ d(w^{1,\epsilon}, w^{2,\epsilon}) \le c(\epsilon)\right] = 1$. Then, as $\epsilon \to 0$, either both $\mu^{1,\epsilon}$ and $\mu^{2,\epsilon}$ converge weakly to the same Borel measure on $X$ or neither converges.
\end{lemma}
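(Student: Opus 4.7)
The plan is to reduce to a portmanteau-style comparison between the two laws and show that the difference $\mu^{1,\epsilon} - \mu^{2,\epsilon}$ vanishes in a suitable sense. By the symmetric role of $w^{1,\epsilon}$ and $w^{2,\epsilon}$, it is enough to prove the following one-sided statement: if $\mu^{1,\epsilon}$ converges weakly to some Borel measure $\oline{\mu}$ on $X$, then so does $\mu^{2,\epsilon}$. The desired equivalence then follows by interchanging the indices.

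For a closed set $F \subset X$ and $\delta > 0$, write $F_\delta := \{ x \in X : d(x,F) \le \delta\}$, which is itself closed. The first step is the pointwise set inclusion: as soon as $c(\epsilon) \le \delta$, we have
\[
	\{ w^{2,\epsilon} \in F \} \subset \{ w^{1,\epsilon} \in F_\delta \} \cup \{ d(w^{1,\epsilon}, w^{2,\epsilon}) > c(\epsilon) \},
\]
because if $w^{2,\epsilon} \in F$ and $d(w^{1,\epsilon}, w^{2,\epsilon}) \le c(\epsilon) \le \delta$, then $w^{1,\epsilon} \in F_\delta$. By the hypothesis on $c(\epsilon)$, the second event has $\PP$-probability zero, and hence $\mu^{2,\epsilon}(F) \le \mu^{1,\epsilon}(F_\delta)$ for all $\epsilon$ small enough (depending only on $\delta$).

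Applying the portmanteau theorem to the convergent sequence $\mu^{1,\epsilon} \to \oline{\mu}$ at the closed set $F_\delta$,
\[
	\limsup_{\epsilon \to 0} \mu^{2,\epsilon}(F) \le \limsup_{\epsilon \to 0} \mu^{1,\epsilon}(F_\delta) \le \oline{\mu}(F_\delta).
\]
Since $F$ is closed, $\bigcap_{\delta > 0} F_\delta = F$, and so continuity of the finite measure $\oline{\mu}$ from above yields $\oline{\mu}(F_\delta) \to \oline{\mu}(F)$ as $\delta \to 0$. Combining the two inequalities, $\limsup_\epsilon \mu^{2,\epsilon}(F) \le \oline{\mu}(F)$ for every closed $F \subset X$, which by the converse portmanteau theorem implies $\mu^{2,\epsilon} \to \oline{\mu}$ weakly.

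There is no real obstacle in this argument; the only point that requires a little care is the use of portmanteau in a general (not necessarily locally compact or separable) metric space $X$, but the closed-set formulation holds for arbitrary metric spaces, so no additional hypothesis is needed. The scheme applies verbatim to the space $X = C([0,\oo),\RR)$ equipped with the metric \eqref{E:metric}, which is what is needed for Theorem \ref{T:littleresult}.
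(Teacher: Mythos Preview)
Your argument is correct and is essentially the same as the paper's proof: both rely on the portmanteau characterization of weak convergence together with the almost-sure closeness $d(w^{1,\epsilon},w^{2,\epsilon})\le c(\epsilon)$. The only cosmetic difference is that the paper works with open sets $G$ and the inner approximation $G_\delta=\{x:B_\delta(x)\subset G\}$ to compare $\liminf \mu^{i,\epsilon}(G)$, while you use the dual closed-set formulation with the outer enlargement $F_\delta$ and $\limsup$; these are equivalent by complementation.
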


\begin{proof}
	A necessary and sufficient condition for a measure $\mu_\epsilon$ to converge weakly, as $\epsilon \to 0$, to some measure $\oline{\mu}$ is that $\liminf_{\epsilon \to 0} \mu_\epsilon(G) \ge \oline{\mu}(G)$ for every open $G \subset X$ (see \cite{B}).
	
	For $\delta > 0$, define $G_\delta := \{ x \in X : B_\delta(x) \subset G\}$ and $\oline{c}(s) := \sup_{0 < r \le s} c(r)$. Then, for every $0 < \epsilon < \epsilon'$, $\mu^{2,\epsilon}(G) \ge \mu^{1,\epsilon}(G_{\oline{c}(\epsilon')})$. Sending first $\epsilon \to 0$ and then $\epsilon' \to 0$, and switching the roles of $\mu^{1,\epsilon}$ and $\mu^{2,\epsilon}$, we obtain $\liminf_{\epsilon \to 0} \mu^{1,\epsilon}(G) = \liminf_{\epsilon \to 0} \mu^{2,\epsilon}(G)$, and the result follows.
\end{proof}

\newproof{proofoftttt}{Proof of Theorem \ref{T:littleresult}}

\begin{proofoftttt}
	We apply Lemma \ref{L:weakconv} to $X = C([0,\oo), \RR)$, the metric $d$ in \eqref{E:metric}, the paths $\{t \mapsto u^\epsilon(x,t)\},\{ t \mapsto W^\epsilon_t\} \in X$, and, for some $C > 0$ determined by Lemma \ref{L:globaltimeconv},
	 \[
	 	c(\epsilon) := C \frac{\epsilon}{\eta(\epsilon)}.
	 \]
	 We obtain the result in view of Lemmas \ref{L:globaltimeconv} and \ref{L:weakconv}, and from the fact that, as $\epsilon \to 0$, $W^\epsilon$ converges in law to a Brownian motion.
\end{proofoftttt}

Let $W^1$ and $W^2$ be as in \eqref{A:randompaths} with $\eta = 1$. It is natural to ask whether Theorem \ref{T:littleresult} can be used to describe the long-time behavior of 
\begin{equation}
	u_t + H(Du,y)\dot W^1_t + f(y) \dot W^2_t = 0 \quad \text{in } \RR^d \times (0,\oo), \qquad u(\cdot,0) = 0 \quad \text{on } \RR^d. \label{E:longtime}
\end{equation}
Indeed, the scaling $u^\epsilon(x,t) := \epsilon u\pars{ \frac{x}{\epsilon}, \frac{t}{\epsilon^2} }$ recovers the solution of \eqref{E:littleeq} with $\eta(\epsilon) = \epsilon^2$. Unfortunately, this does not satisfy $\lim_{\epsilon \to 0} \epsilon \eta(\epsilon)^{-1} = 0$, and so we cannot directly apply Theorem \ref{T:littleresult}. We hope to study the long-time behavior of \eqref{E:longtime} and similar equations at a future time.

We conclude with the observation that Lemma \ref{L:localtimeconv} can be used to obtain uniform convergence results for equations like \eqref{E:littleeq} that are not covered by Theorem \ref{T:mildapproxhomog}. For instance, if, for some $\mu_\eta > 0$, $W^{1,\epsilon}_0 = W^{2,\epsilon}_0 = 0$ and, for $k \in \NN_0$,
\[
	\dot W^{1,\epsilon}_t :=
	\begin{cases}
		\mu_\eta & \text{if }t \in (4k\eta, (4k+2)\eta), \\
		-\mu_\eta & \text{if }t \in ((4k+2)\eta, (4k+4)\eta),
	\end{cases}
\quad \text{and} \quad
	\dot W^{2,\epsilon}_t :=
	\begin{cases}
		\mu_\eta & \text{if } t \in (2k\eta, (2k+1)\eta), \\
		-\mu_\eta & \text{if } t \in ((2k+1)\eta, (2k+2)\eta),
	\end{cases}
\]
then, for all $T >0$,
\[
	\nor{u^\epsilon + W^{1,\epsilon} }{\RR^d \times [0,T]} \lesssim \frac{T}{\eta} \epsilon.
\]
Hence, if $\lim_{\eta \to 0} \mu_\eta \eta = \lim_{\epsilon \to 0} \frac{\epsilon}{\eta(\epsilon)} = 0$, then, as $\epsilon \to 0$, $u^\epsilon$ converges uniformly to $0$. 

\appendix

\section{Well-posedness} \label{S:wellposedness}

\renewcommand{\thetheorem}{\Alph{section}.\arabic{theorem}}
\renewcommand{\thelemma}{\Alph{section}.\arabic{lemma}}
\renewcommand{\thedefinition}{\Alph{section}.\arabic{definition}}
\renewcommand{\theproposition}{\Alph{section}.\arabic{proposition}}
\renewcommand{\thesubsection}{\Alph{section}.\arabic{subsection}}

For $W \in C([0,\oo), \RR)$ and $u_0 \in BUC(\RR^d)$, we study the well-posedness of the initial value problem
\begin{equation}
	du = H(Du,x) \cdot dW \quad \text{in } \RR^d \times (0,\oo), \qquad u(\cdot,0) = u_0 \quad \text{on } \RR^d. \label{E:appeq}
\end{equation}
Throughout this section, we assume \eqref{A:wellposedeq}. It is straightforward to check that \eqref{A:wellposedeq} yields, for some $0 < c_0 \le C_0$ and $C > 0$ and for all $(p,y) \in \RR^d \times \RR^d$, 
\begin{equation}  
	\begin{cases}
		c_0|p|^q \le H^*(p,y) \le C_0|p|^q,  &|D_y H^*(p,y)| \le C|p|^q,\\[1mm]
		c_0|p|^{q-1} \le |D_p H^*(p,y)| \le C_0|p|^{q-1},  &|D^2_{py} H^*(p,y)| \le C|p|^{q-1},\\[1mm]
		c_0|p|^{q-2}\; I_d \le D^2_p H^*(p,y) \le C_0|p|^{q-2}\; I_d,   &|D^2_{y} H^*(p,y)| \le C|p|^q.
	\end{cases}
	\label{A:Hbounds}
\end{equation}
Standard convex analysis implies that \eqref{A:wellposedeq} holds with $H$ and $H^*$ interchanged, and with $q' = \frac{q}{q-1}$ replacing $q$. Hence, $H$ satisfies \eqref{A:Hbounds} with the exponent $q'$.

The definition of the Lions-Souganidis pathwise viscosity solutions, as it appears in \cite{LS1}, relies on the existence, for all $t_0 > 0$, $\phi \in C^2_b(\RR^d)$, and sufficiently small $h > 0$, of smooth-in-$x$ solutions of
\begin{equation}
	d\Phi = H(D\Phi,x) \cdot dW \quad \text{in } \RR^d \times (t_0 - h,t_0 + h), \qquad \Phi(\cdot,t_0) = \phi \quad \text{on } \RR^d.
\label{E:smoothtestfunc}
\end{equation}
These are used as test functions to account for the rough part of the equation. The solution $\Phi$ may be obtained via a change of variables from a smooth solution of the classical equation $U_t = H(DU,x)$, which can be constructed using the method of characteristics.

\begin{definition}\label{D:solution}
	The upper (resp. lower) semicontinuous function $u : \RR^d \times [0,\oo) \to \RR$ is a sub-solution (resp. super-solution) of \eqref{E:appeq} whenever the following holds: for any $\psi \in C^1((0,\oo),\RR)$, $h > 0$, and smooth-in-$x$ solution $\Phi$ of \eqref{E:smoothtestfunc} in $\RR^d \times (t_0 - h,t_0+h)$, if $u(x,t) - \Phi(x,t) - \psi(t)$ attains a local maximum (resp. minimum) at $(x_0,t_0)$, then $\psi'(t_0) \le 0$ (resp. $\psi'(t_0) \ge 0$). A solution of \eqref{E:appeq} is both a sub-solution and a super-solution. 
\end{definition}

Before we state the well-posedness theorem, we introduce some additional notation. For any modulus of continuity $\omega: [0,\oo) \to [0,\oo)$,
\[
	\theta(\omega,\lambda) := \sup_{r \ge 0} \pars{ \omega(r) - c_0 \lambda^{q-1} r^q }
	\quad
	\text{and}
	\quad
	\tilde \omega(s) := \inf_{\lambda \ge 0} \pars{ C_0 \lambda^{q-1} s^q + \theta(\omega_{u_0}, \lambda) }.
\]
Observe that $\lim_{\lambda \to \oo} \theta(\omega,\lambda) = \lim_{s \to 0} \tilde \omega(s) = 0$. 

A particular example used throughout the paper is the Lipschitz modulus $\omega(r) = Lr$. For some $C_1,C_2 > 0$,
\[
	\theta(\omega,\lambda) = C_1 L^{q'} \lambda\nv \quad \text{and} \quad \tilde \omega(s) = C_2 L s.
\] 

\begin{theorem} \label{T:extension}
	Assume \eqref{A:wellposedeq}. Then, for every $u_0 \in BUC(\RR^d)$, there exists a unique pathwise viscosity solution $u$ of \eqref{E:appeq}. Moreover, if $u^1$ and $u^2$ are the unique solutions of \eqref{E:appeq} with respectively initial conditions $u_0^1, u_0^2 \in BUC(\RR^d)$ and paths $W^1, W^2 \in C([0,\oo), \RR)$ with $W^1_0 = W^2_0$, then, for all $t > 0$,
	\begin{equation}
		\nor{ u^1(\cdot,t) - u^2(\cdot,t)}{\oo} \le \nor{u_0^1 - u_0^2}{\oo} + \theta\pars{ \max(\omega_{u_0}, \omega_{v_0}), \frac{1}{\nor{W^1- W^2}{[0,t]} } }\label{E:extension}
	\end{equation}
	and
	\begin{equation}
		\sup_{x,y \in \RR^d} \abs{ u^1(x,t) - u^1(y,t)} \le \widetilde{\omega_{u_0^1}}(|x-y|). \label{E:equicontinuity}
	\end{equation}
\end{theorem}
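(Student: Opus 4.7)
The overall strategy is to reduce to the case of smooth driving paths, prove the two a priori estimates \eqref{E:extension} and \eqref{E:equicontinuity} at that level with constants \emph{independent} of the regularity of $W$, and then extend by density. Fix a sequence of $C^1$ paths $W^\eta$ converging locally uniformly to $W$, for instance the piecewise linear interpolation \eqref{A:Wetalinear}, and let $u^\eta$ be the classical Crandall--Lions viscosity solution of
\[
u^\eta_t + H(Du^\eta,x)\dot W^\eta_t = 0 \quad \text{in } \RR^d \times (0,\oo), \qquad u^\eta(\cdot,0) = u_0,
\]
whose existence and uniqueness are standard from \eqref{A:Hbounds}.

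For the equicontinuity \eqref{E:equicontinuity}, my plan is a doubling-of-variables argument tailored to the $q$-homogeneity of $H^*$. Take the penalization $\Psi_\lambda(z) := c_0 \lambda^{q-1}|z|^q$; the very definition of $\theta$ gives $u_0(x) - u_0(y) - \Psi_\lambda(x-y) \le \theta(\omega_{u_0},\lambda)$ at $t=0$. I would then show that
\[
\Theta^\eta(\lambda,t) := \sup_{x,y \in \RR^d}\pars{ u^\eta(x,t) - u^\eta(y,t) - \Psi_\lambda(x-y) }
\]
is nonincreasing in $t$, uniformly in $\eta$, by verifying that $\Psi_\lambda(x-y)$ is a viscosity super-solution of the doubled equation; this rests on the two-sided bounds $c_0|p|^{q-2}I_d \le D^2_p H^*(p,y) \le C_0|p|^{q-2}I_d$ and $|D^2_{py}H^*| \le C|p|^{q-1}$ from \eqref{A:Hbounds}, together with the $q'$-homogeneity of $H$ dual to the $q$-homogeneity of $H^*$. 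Optimizing over $\lambda$ then yields the modulus $\widetilde{\omega_{u_0}}$ in \eqref{E:equicontinuity}.

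The main obstacle is the path-stability estimate \eqref{E:extension} at the smooth level, which I would prove through the variational representation. On any interval $[s_1,s_2]$ where $\dot W^\eta$ has constant sign, the strict convexity and $q$-homogeneity of $H^*$, combined with the change of variables $\tau = W^\eta_s$, give the Hopf--Lax formula
\[
u^\eta(x,s_2) = \inf_{\tilde\gamma(W^\eta_{s_2}) = x}\left\{ u^\eta(\tilde\gamma(W^\eta_{s_1}),s_1) + \int_{W^\eta_{s_1}}^{W^\eta_{s_2}} H^*(\tilde\gamma'(\tau),\tilde\gamma(\tau))\,d\tau \right\},
\]
so that $u^\eta$ on a monotone arc is the standard Hamilton--Jacobi semigroup applied over the increment $W^\eta_{s_2}-W^\eta_{s_1}$ (with its backward analogue on decreasing arcs). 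Two paths $W^1,W^2$ sharing the same initial value then differ on each matched pair of monotone arcs only through these increments, whose discrepancy is at most $2\|W^1-W^2\|_{[0,t]}$; the resulting inf-convolution estimate at a single step, combined again with the $q$-homogeneity of $H^*$, delivers a bound of the form $\theta(\omega_{u_0},1/\|W^1-W^2\|_{[0,t]})$, and a telescoping argument across the finitely many monotone pieces of $W^\eta$ preserves it uniformly in their number because the equicontinuity already established guarantees that the working modulus does not deteriorate under composition.

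With both uniform estimates in hand, $\{u^\eta\}$ is Cauchy in $C(\RR^d \times [0,T])$ for every $T>0$, any two approximating sequences share the same limit $u$ by \eqref{E:extension}, and this limit inherits \eqref{E:extension} and \eqref{E:equicontinuity}. To verify that $u$ satisfies Definition \ref{D:solution}, I would use that smooth-in-$x$ test solutions $\Phi$ of \eqref{E:smoothtestfunc} exist on short intervals by the method of characteristics, thanks to the $C^2_b$ bounds in \eqref{A:wellposedeq}; if $u - \Phi - \psi$ attains a local maximum at $(x_0,t_0)$, I would transfer the touching to the approximating level $u^\eta$, invoke the classical viscosity inequality against $\Phi$ for the $C^1$-path equation, and pass to the limit $\eta \to 0$ to obtain $\psi'(t_0) \le 0$. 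Uniqueness in the pathwise class is then immediate from \eqref{E:extension} applied with $W^1=W^2$ and $u_0^1=u_0^2$, which forces $\nor{u^1-u^2}{\oo} \le \lim_{\lambda \to \oo}\theta(\omega_{u_0},\lambda) = 0$.
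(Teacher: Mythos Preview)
Your proposal has a genuine gap at the very first step: the penalization $\Psi_\lambda(x-y)=c_0\lambda^{q-1}|x-y|^q$ is \emph{not} a super-solution of the doubled equation
\[
z_t = \bigl(H(D_x z,x) - H(-D_y z,y)\bigr)\dot W^\eta_t
\]
when $H$ depends on $x$. The function $\Psi_\lambda$ would give $H(D_x\Psi_\lambda,x)-H(-D_y\Psi_\lambda,y)=H(p,x)-H(p,y)$ with $p=q c_0\lambda^{q-1}|x-y|^{q-2}(x-y)$, which is not zero and enters multiplied by $\dot W^\eta$. In the classical theory one absorbs this spatial error into the time derivative, but the bound then carries a factor of $\int_0^t|\dot W^\eta_s|\,ds$, which blows up as $\eta\to 0$ for rough $W$. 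The second-derivative bounds in \eqref{A:Hbounds} that you cite do not remove this obstruction; they only control the size of the error, not its sign.

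The paper's cure is precisely the missing ingredient: it replaces $|x-y|^q$ by the anisotropic distance function
\[
L(x,y)=\inf_{\gamma\in\mcl A(x,y)}\int_0^1 H^*(-\dot\gamma_s,\gamma_s)\,ds,
\]
which satisfies the exact cancellation $H(D_xL,x)=H(-D_yL,y)=(q-1)L$ wherever it is differentiable (Lemma~\ref{L:distancefunction}\eqref{L:Lequations}). Combined with the $q$-homogeneity, this makes $\Phi_\lambda(x,y,t)=\bigl(\lambda/(1-\lambda(\xi_t-\zeta_t))\bigr)^{q-1}L(x,y)$ an \emph{exact} solution of the doubled equation, so no $\dot W$-dependent error appears at all. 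Both \eqref{E:extension} and \eqref{E:equicontinuity} then follow from a single monotonicity statement (Proposition~\ref{P:uvestimate}) together with the two-sided bound $c_0|x-y|^q\le L(x,y)\le C_0|x-y|^q$. Your Hopf--Lax/telescoping argument for \eqref{E:extension} has a separate difficulty---two different paths $W^1,W^2$ have unrelated monotone decompositions, so there is no natural ``matching of arcs''---but even setting that aside, it rests on the equicontinuity estimate whose proof already fails.
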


A standard density argument and \eqref{E:extension} yield the existence, since Definition \ref{D:solution} is consistent with the classical notion of solution for \eqref{E:appeq} if $W$ is smooth. Uniqueness, \eqref{E:extension}, and \eqref{E:equicontinuity} follow from comparison principle arguments.

If $u$ and $v$ are respectively a sub- and super-solution of \eqref{E:appeq}, then $z(x,y,t) := u(x,t) - v(y,t)$ is a sub-solution of the doubled equation
\begin{equation}
	dz = \pars{ H(D_x z, x) - H(-D_y z,y) } \cdot dW \quad \text{in } \RR^d \times \RR^d \times (0,\oo). \label{E:doubledeq}
\end{equation}
The proof of the comparison principle in the classical viscosity theory is based on finding an estimate, as $\lambda \to \oo$, for
\begin{equation}
	\sup_{(x,y) \in \RR^d \times \RR^d} \pars{ u(x,t) - v(y,t) - (\lambda/2)|x-y|^2}. \label{E:difference}
\end{equation}
If $H(p,x) = H(p)$ is independent of $x$, then $L_\lambda(x,y) := (\lambda/2)|x-y|^2$ is a smooth solution of
\begin{equation}
	H(D_x L_\lambda,x) = H(-D_y L_\lambda,y), \label{E:cancellation}
\end{equation}
so, in view of Definition \ref{D:solution}, \eqref{E:difference} is nondecreasing in $t$. However, when $H$ depends on $x$, we expect error terms like $|dW|$ to appear, which in general may be infinite. One strategy is to replace $(\lambda/2)|x-y|^2$ with a smooth solution of \eqref{E:doubledeq} that is equal to $(\lambda/2)|x-y|^2$ for $t$ near the maximum point of \eqref{E:difference}. Strong regularity assumptions for both $H$ and $W$ are required to carry out this analysis, and the details may be found in \cite{LS5}. The idea of \cite{FGLS} and \cite{fLS} is that a convex Hamiltonian $H$ gives rise to a distance function, that is, an exact solution $L_\lambda$ of \eqref{E:cancellation} that is comparable to $|x-y|$.

\subsection{The distance function}

For $x,y \in \RR^d$, define
\[
	\mcl A(x,y) := \left\{ \gamma \in W^{1,\oo}( [0,1], \RR^d) : \gamma_0 = x, \; \gamma_1 = y \right\} \quad \text{and} \quad \ell(x,y)_s = \ell_s := x + s(y-x).
\]
Note that $\ell(x,y) \in \mcl A(x,y)$ and $\mcl A(x,y) = W^{1,\oo}_0([0,1],\RR^d) + \ell(x,y)$.

The distance function associated to $H$ is 
\begin{equation}
	L(x,y) := \inf \left\{ \int_0^1 H^*\pars{ - \dot \gamma_s, {\gamma_s}}\;ds : \gamma \in \mcl A(x,y) \right\}. \label{E:distancefn}
\end{equation}
We summarize its main properties in the next lemma.

{
\setlength{\parskip}{0in}

\begin{lemma}\label{L:distancefunction} 
Assume \eqref{A:wellposedeq}.
\begin{enumerate}[(a)]
\item \label{L:Lequations} $L$ is a viscosity solution of
	\begin{equation*}
		-(q-1)L + H(D_x L,x) = 0 \quad \text{and} \quad -(q-1)L + H(-D_y L,y) = 0 \quad \text{in } \RR^d \times \RR^d.
	\end{equation*}
	In particular, $H(D_x L(x,y),x) = H(-D_y L(x,y),y)$ whenever $L$ is differentiable at $(x,y)$.
\item \label{L:Lcoercive} For all $x,y \in \RR^d$, $c_0 |x-y|^q \le L(x,y) \le C_0 |x-y|^q$. Furthermore, there exists $\gamma \in \mcl A(x,y)$ such that $L(x,y) = \int_0^1 H^*(- \dot \gamma_s, \gamma_s)\;ds$, and, for almost every $s \in [0,1]$, $|\dot \gamma_s| \approx |x-y|$.
\item \label{L:LLipschitz} For all $R > 0$, $|D_x L| + |D_y L| \lesssim_R 1$ on $\Delta_R$.
\item \label{L:Lsmoothstrip} There exists $r_0 > 0$ such that $L \in C^1(\Delta_{r_0})$.
\item \label{L:Lsemiconcave} For all $R > 1$, $D^2 L(x,y) \lesssim_R I_{2d}$ on $\Delta_R$ if $q \ge 2$ or on $\Delta_R \backslash \oline{\Delta_{1/R}}$ if $q < 2$.
\end{enumerate}
\end{lemma}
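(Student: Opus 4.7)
The plan is to handle the five claims in the order (b), (a), (c), (e), (d), using each as a stepping stone to the next.

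\textbf{Part (b).} The upper bound $L(x,y) \le C_0|x-y|^q$ comes from plugging the straight line $\ell(x,y)$ into the definition of $L$ and invoking $H^*(p,y) \le C_0|p|^q$ from \eqref{A:Hbounds}. The matching lower bound is Jensen's inequality applied to $H^*(p,y) \ge c_0|p|^q$ and the convex function $|\cdot|^q$. Existence of a minimizer is the direct method in the calculus of variations: a minimizing sequence is bounded in $W^{1,q}$, hence weakly convergent along a subsequence, and the functional is weakly lower semicontinuous by convexity of $H^*(\cdot,y)$ in its first variable and continuity in the second. Because the Lagrangian $F(v,y) := H^*(-v,y)$ is autonomous, the DuBois--Reymond/Erdmann identity yields that $s \mapsto H^*(-\dot\gamma_s,\gamma_s)$ is constant along any minimizer, and hence equals $L(x,y) \approx |x-y|^q$; the two-sided bound on $H^*$ then gives $|\dot\gamma_s| \approx |x-y|$ for a.e.\ $s$.

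\textbf{Part (a).} I derive the two equations from dynamic programming, exploiting the homogeneity. By rescaling time, the $q$-homogeneity of $H^*$ shows that the cost of the analogous problem over a horizon of length $t$ satisfies $L_t(x,y) = t^{1-q}L(x,y)$. The DPP then reads, for small $\tau > 0$,
\[
L(x,y) = \inf_{\alpha\in\RR^d}\pars{\tau H^*(-\alpha,x) + (1-\tau)^{1-q}L(x+\tau\alpha,y)} + o(\tau).
\]
Expanding $(1-\tau)^{1-q} = 1 + (q-1)\tau + o(\tau)$ and recognizing the Legendre transform in the $\alpha$-infimum identifies the leading-order balance as $-(q-1)L + H(D_xL,x) = 0$, which can be made rigorous in the viscosity sense by standard test function arguments. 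The second equation follows symmetrically by varying the minimizer near $s = 1$, and the equality at differentiability points is then immediate.

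\textbf{Parts (c) and (e).} The Lipschitz bound in (c) follows from (a): at differentiability points, $H(D_xL,x) = (q-1)L \lesssim_R 1$ on $\Delta_R$, which by coercivity of $H$ forces $|D_xL| \lesssim_R 1$, and symmetrically for $D_yL$. For the semiconcavity in (e), I average two minimizers: if $\gamma_i$ is optimal for $(x_i,y_i)\in\Delta_R$, then $\gamma_m := (\gamma_1+\gamma_2)/2$ is admissible for the midpoint $(x_m,y_m)$, so
\[
L(x_m,y_m) \le \int_0^1 H^*\pars{-\tfrac{\dot\gamma_1+\dot\gamma_2}{2}, \tfrac{\gamma_1+\gamma_2}{2}}\,ds.
\]
A second-order Taylor expansion of $H^*$ about the midpoint arguments, controlled by \eqref{A:Hbounds} and the speed bound $|\dot\gamma_i| \approx |x_i-y_i|$ from (b), yields the semiconcavity estimate $D^2L \lesssim_R I_{2d}$. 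The restriction to $\Delta_R\setminus\oline{\Delta_{1/R}}$ when $q<2$ is forced by \eqref{A:wellposedeq}, which provides $C^2$-control on $H^*$ only on annuli bounded away from $p=0$; via (b) this translates into the annular restriction on $(x,y)$.

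\textbf{Part (d) and the main obstacle.} The $C^1$-regularity near the diagonal is the hardest step. My plan is to rescale: the curve $\tilde\gamma_s := (\gamma_s - x)/|x-y|$ goes from $0$ to the unit vector $(y-x)/|x-y|$ with $|\dot{\tilde\gamma}_s| \approx 1$, and minimizes a modified functional with Lagrangian $H^*(p, x + |x-y|\tilde y)$, which as $|x-y| \to 0$ converges to the spatially frozen problem $H^*(\cdot, x)$, whose unique minimizer is the straight line. Because the rescaled velocities lie inside the smoothness annulus $B_R \setminus \oline{B_{1/R}}$ of \eqref{A:wellposedeq}, the Euler--Lagrange ODE is a smooth, uniformly convex problem, so a classical implicit function theorem argument shows that for $r_0$ sufficiently small the minimizer from $x$ to $y$ is unique and depends $C^1$-smoothly on the endpoints, and the standard endpoint representation for $D_xL$ and $D_yL$ in terms of $\dot\gamma_0$ and $\dot\gamma_1$ then gives $L \in C^1(\Delta_{r_0})$. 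The main obstacle is precisely the degeneracy of $H^*$ at $p = 0$ when $q \ne 2$: the rescaling, powered by the quantitative speed estimate $|\dot\gamma_s| \approx |x-y|$ from (b), is exactly what keeps the argument uniform as $|x-y| \to 0$.
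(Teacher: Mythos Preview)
Your treatment of parts (a), (b), (c), and (d) is sound, though it diverges from the paper in places: the paper proves (c) by a direct competitor-perturbation estimate rather than via the equations in (a), and proves (d) by applying the implicit function theorem to the functional $I(\gamma,x,y)$ on $W^{1,2}_0$ (after truncating $H^*$ to quadratic growth), showing that $D^2_\gamma I$ is coercive for $|x-y|$ small. Your rescaling route to (d) is a legitimate alternative and your Erdmann-identity argument for the speed bound in (b) is a clean way to make explicit what the paper only gestures at.

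Part (e), however, has the inequality running the wrong way. Semiconcavity means $D^2L \le C\,I_{2d}$, i.e.\ $L(x+h,y+k) + L(x-h,y-k) - 2L(x,y) \le C(|h|^2+|k|^2)$; you need an \emph{upper} bound on the second difference. Competitors for $L$ only produce upper bounds on $L$ itself. Your scheme takes minimizers $\gamma_1,\gamma_2$ at the outer points and uses their average as a competitor at the midpoint, which yields $L(x_m,y_m) \le \int H^*(-\dot\gamma_m,\gamma_m)$ and hence only a \emph{lower} bound
\[
L(x_1,y_1) + L(x_2,y_2) - 2L(x_m,y_m) \ge \int_0^1 \bigl[H^*(-\dot\gamma_1,\gamma_1) + H^*(-\dot\gamma_2,\gamma_2) - 2H^*(-\dot\gamma_m,\gamma_m)\bigr]\,ds.
\]
This would give semi\emph{convexity}-type information, not semiconcavity, and moreover the right-hand side involves $|\dot\gamma_1-\dot\gamma_2|$ and $|\gamma_1-\gamma_2|$, which you have no way to control by $|x_1-x_2|+|y_1-y_2|$ without already knowing stability of minimizers. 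The correct move, which the paper makes, is the reverse: fix a minimizer $\gamma$ for the \emph{center} point $(x,y)$, set $\eta_s := h + s(k-h)$, and use $\gamma\pm\eta$ as competitors for $(x\pm h,y\pm k)$. Then $L(x+h,y+k)+L(x-h,y-k)-2L(x,y)$ is bounded above by the symmetric second difference of $H^*$ along $\gamma$, which the bounds in \eqref{A:Hbounds} together with $|\dot\gamma_s|\approx|x-y|$ control by $C_R(|h|^2+|k|^2)$, with the annular restriction when $q<2$ arising exactly as you say.
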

}

If $g$ is a smooth Riemannian metric on $\RR^d$ and $H^*(p,y) = \frac{1}{2} \ip{ g(y)p, p}$, then the distance function $L$ corresponding to $H(p,y) = \frac{1}{2} \ip{ g\nv(y)p, p}$ is the so-called Riemannian energy associated to $g$. In \cite{FGLS}, it is shown that $L$ is $C^1$ in a neighborhood of the diagonal $\Delta_0$, which is a specific case of Lemma \ref{L:distancefunction}\eqref{L:Lsmoothstrip}. The argument for part \eqref{L:Lsmoothstrip} resembles that of the more general setting in \cite{fLS}.

For $\epsilon > 0$, the distance function $L^\epsilon$ associated to $H(\cdot,\cdot/\epsilon)$ is
\[
	L^\epsilon(x,y) := \inf \left\{ \int_0^1 H^*\pars{ - \dot \gamma_s, \frac{\gamma_s}{\epsilon}}\;ds : \gamma \in \mcl A(x,y) \right\}. 
\]
A rescaling and part \eqref{L:Lsmoothstrip} yield $L^\epsilon \in C^1(\Delta_{\epsilon r_0})$. Therefore, the strip in which $L^\epsilon$ is differentiable shrinks as $\epsilon \to 0$. This presents a obstacle for obtaining the scale invariant estimate \eqref{E:extension}. 

We bypass this difficulty with the semiconcavity estimate for $L$. If $\phi(x,y)$ is smooth and $L - \phi$ attains a minimum at $(x_0,y_0)$, then, in view of Lemma \ref{L:distancefunction}\eqref{L:Lsmoothstrip} and \eqref{L:Lsemiconcave}, $L$ is differentiable at $(x_0,y_0)$.

\newproof{proofofal}{Proof of Lemma \ref{L:distancefunction}}
\begin{proofofal}

\eqref{L:Lequations} This follows from well known variational formulae for solutions of Hamilton-Jacobi equations. See, for instance, Lions \cite{Lbook}.

\eqref{L:Lcoercive} The bounds for $L$ are immediate from \eqref{A:Hbounds} and \eqref{E:distancefn}. In view of the convexity and regularity of $H^*$, a classical variational argument yields the existence of a minimizer $\gamma$. The bounds for $\dot \gamma$ can then be inferred from the Euler-Lagrange equation.

\eqref{L:LLipschitz} Pick $(x,y) \in \Delta_R$ and $h \in \RR^d$, and let $\gamma \in \mcl A(x,y)$ be a minimizer for $L(x,y)$. Then $\left\{ s \mapsto \gamma_s + s h : s \in [0,1] \right\} \in \mcl A(x,y+h)$. It follows from part \eqref{L:Lcoercive} that
\begin{gather*}
	L(x, y+h) - L(x,y) 
	\le \int_0^1 \pars{ H^*\pars{-\dot \gamma_s - h, \gamma_s + sh} - H^*(-\dot \gamma_s, \gamma_s)}\;ds \\
	\lesssim \int_0^1 \left[ \pars{ |\dot \gamma_s|^{q-1} +|h|^{q-1} } \abs{h} + \pars{ |\dot \gamma_s|^q + \abs{h}^q} \abs{s h} \right]\;ds
	\lesssim |h| (R^q + R^{q-1}) + o(|h|).
\end{gather*}
The opposite inequality is obtained by switching the roles of $y$ and $y+h$, and choosing $h$ small enough that $(x,y+h) \in \Delta_R$. This yields the bound for $D_y L$, and the argument for $D_x L$ is similar.

\eqref{L:Lsmoothstrip} Define $I: W_0^{1,q}([0,1],\RR^d) \times \RR^d \times \RR^d \to \RR$ by 
\[
	I(\gamma,x,y) := \int_0^1 H^*(-\dot \gamma_s + x-y, \gamma_s + x + s(y-x))\;ds.
\]
Henceforth, the arguments of $H^*$ and all of its derivatives are $(-\dot { \gamma}_s + x-y,  \gamma_s + x + s(y-x))$.
	
Observe that $L(x,y) = \min_{\gamma \in W_0^{1,q}} I(\gamma,x,y)$, and, in view of part \eqref{L:Lcoercive}, the minimum is attained for some $\gamma \in W^{1,\oo}_0$ satisfying $|\dot \gamma_s + y - x| \approx |x-y|$ for almost every $s$. As a result, we can assume that $H^*$ grows at most quadratically by redefining $H^*$ outside of $B_{R} \times \RR^d$ for some large $R > 0$. It follows that the map $I$ can be defined as before on $W_0^{1,2} \times \RR^d \times \RR^d$, and that $I$ has the same minimizers. 
	
For $x,y \in \RR^d$, fix a minimizer $\gamma$. Then $D_\gamma I(\gamma, x,y) = 0$, and, for all $\xi, \eta \in W_0^{1,2}$, 
\begin{align*}
	D^2_\gamma I(\gamma,x,y)[\xi,\eta]
	= \int_0^1 \pars{ \ip{ D^2_p H^* \dot \xi_s, \dot \eta_s}
	- \ip{ D^2_{px} H^*\dot \xi_s, \eta_s}
	- \ip{D^2_{xp} H^*\xi_s, \dot \eta_s}
	+ \ip{D^2_x H^*\xi_s, \eta_s} }\;ds.
\end{align*}
In view of \eqref{A:Hbounds}, there exists $C > 0$ such that
\begin{align*}
	D^2_\gamma I(\gamma,x,y)[\eta,\eta]
	&\gtrsim |x-y|^{q-2} \int_0^1 \pars{ |\dot \eta_s|^2 - C|x-y| |\dot \eta_s| |\eta_s| - C|x-y|^2 |\eta_s|^2}\;ds.
\end{align*}
Young's and Poincar\'e's inequalities give, for a larger value of $C$,
\[
	D^2_\gamma I(\gamma,x,y)[\eta,\eta] \gtrsim |x-y|^{q-2} \pars{ 1 - C|x-y|^2} \int_0^1 |\dot \eta_s|^2\;ds.
\]
Set $r_0 := \frac{1}{\sqrt{2 C}}$. Then, if $(x,y) \in \Delta_{r_0}$,
\[
	D^2_\gamma I(\gamma,x,y)[\eta,\eta] \gtrsim |x-y|^{q-2} \nor{\eta}{W_0^{1,2}}^2.
\]
As a consequence, for $(x,y) \in \Delta_{r_0}$ with $x \ne y$, $I(\cdot,x,y)$ has a unique minimizer $\gamma = \gamma(x,y)$. Also, for any $\eta \in W_0^{1,2}$,
\begin{align*}
	\nor{ D^2_\gamma I(\gamma(x,y),x,y)[\cdot,\eta] }{\pars{ W_0^{1,2}}^*}
	\gtrsim |x-y|^{q-2} \nor{\eta}{W_0^{1,2}}.
\end{align*}
It follows from the implicit function theorem that $(x,y) \mapsto \gamma(x,y)$ is $C^1$, and, therefore, $L(x,y) = I(\gamma(x,y),x,y)$ is $C^1$ in $\Delta_{r_0}\backslash \Delta_0$.
	
Part \eqref{L:Lcoercive} implies that $L$ is differentiable on $\Delta_0$ with $D_x L = D_y L = 0$. The proportionality constant $C = C(R)$ from part \eqref{L:LLipschitz} satisfies $\lim_{R \to 0} C(R) = 0$, and we conclude that $L$ is $C^1$ in all of $\Delta_{r_0}$.

\eqref{L:Lsemiconcave} Fix a minimizer $\gamma \in \mcl A(x,y)$, let $h,k \in \RR^d$, and set $\eta_s := h + s(k-h)$. Then $\gamma \pm \eta \in \mcl A(x\pm h,y\pm k)$, and, for all $s \in [0,1]$,
\begin{equation}
	|\eta_s| + |\dot \eta_s| \lesssim |h| + |k|. \label{E:etabounds}
\end{equation}
	
It follows from \eqref{A:Hbounds} that
\begin{align*}
	\int_0^1 H^*(-\dot \gamma_s - \dot \eta_s, \gamma_s + \eta_s)\;ds + \int_0^t H^*(-\dot \gamma_s + \dot \eta_s, \gamma_s - \eta_s)\;ds - 2 \int_0^1 H^*(-\dot \gamma_s, \gamma_s)\;ds \lesssim \I + \II + \III,
\end{align*}
where
\begin{gather*}
	\I := \int_0^1 (|\dot \gamma_s +\dot \eta_s|^{q-2})|\dot \eta_s|^2\;ds, \quad \II := \int_0^1(|\dot \gamma_s|^{q-1} + |\dot \eta_s|^{q-1})|\dot \eta_s| |\eta_s|\;ds,\\
	\text{and} \quad \III := \int_0^1 (|\dot \gamma_s|^q + |\dot \eta_s|^q) |\eta_s|^2\;ds.
\end{gather*}
Part \eqref{L:Lcoercive} yields
\begin{align*}
	\II \lesssim R^{q-1}(|h|^2 + |k|^2) + o(|h|^2 + |k|^2) \quad \text{and} \quad
	\III \lesssim R^q(|h|^2 + |k|^2) + o(|h|^2 + |k|^2).
\end{align*}
When $q \ge 2$,
\begin{align*}
	\I &\lesssim R^{q-2} (|h|^2 + |k|^2) + o(|h|^2 + |k|^2),
\end{align*}
while for $q < 2$, we use the lower bound on $|\dot \gamma|$. For sufficiently small $h$ and $k$, $|\dot \gamma_s + \dot \eta_s| \ge |\dot \gamma_s| - |\dot \eta_s| \gtrsim \frac{1}{R}$. Therefore
\begin{align*}
	\I &\lesssim R^{2-q} (|h|^2 + |k|^2) + o(|h|^2 + |k|^2).
\end{align*}
Combining the estimates for $\I$, $\II$ and $\III$ in either case finishes the proof.
\end{proofofal}

\subsection{The comparison principle}

Lemma \ref{L:distancefunction}\eqref{L:Lsmoothstrip} is used to prove the following.

\begin{proposition}\label{P:comparison}
	Let $u$ and $v$ be respectively a bounded sub- and super-solution of \eqref{E:appeq}. Then, for all $t \ge 0$,
	\[
		\sup_{x \in \RR^d} \pars{ u(x,t) - v(x,t)} \le \sup_{x \in \RR^d} \pars{ u(x,0) - v(x,0)}.
	\]
\end{proposition}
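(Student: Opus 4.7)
The plan is to run a doubling-of-variables comparison argument in the spirit of the classical viscosity theory, using the distance function $L$ of \eqref{E:distancefn} in place of the usual quadratic penalty $\tfrac{\lambda}{2}|x-y|^2$. The substitution is essential in the pathwise setting: when $H$ depends on $x$, the quadratic penalty fails to cancel the $dW$-terms in the doubled equation $dz = (H(D_x z, x) - H(-D_y z, y)) \cdot dW$, and the resulting error terms involve $|dW|$, which is infinite for a generic continuous $W$. The distance function is designed precisely to remove this obstruction.

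The key identity, which drives the whole argument, is the cancellation built into Lemma \ref{L:distancefunction}(a): on the strip $\Delta_{r_0}$ where $L$ is $C^1$,
\[
	H(D_x L(x,y), x) = H(-D_y L(x,y), y) = (q-1) L(x,y).
\]
Since $H^*$ is positively $q$-homogeneous in $p$, the dual Hamiltonian $H$ is positively $q'$-homogeneous in $p$, and so for every $\lambda > 0$ and every $(x,y) \in \Delta_{r_0}$,
\[
	H(D_x(\lambda L), x) - H(-D_y(\lambda L), y) = \lambda^{q'}(q-1) L - \lambda^{q'}(q-1) L = 0.
\]
Hence $\Phi_\lambda(x,y,t) := \lambda L(x,y)$ is a time-independent, smooth-in-$(x,y)$ pathwise solution of the doubled equation on this strip, and therefore a legitimate test function for the doubled analogue of Definition \ref{D:solution}.

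With this test function in hand, the argument would proceed as follows. Argue by contradiction: suppose that, for some $\delta, T > 0$, $\sup_{\RR^d \times [0,T]}(u - v) > \sup_{\RR^d}(u(\cdot,0) - v(\cdot,0)) + 2\delta$. For $\lambda$ large and $\alpha, \beta > 0$ small, consider
\[
	\Psi(x,y,t) := u(x,t) - v(y,t) - \lambda L(x,y) - \alpha(|x|^2 + |y|^2) - \beta t.
\]
Boundedness of $u, v$ together with the $\alpha$-penalty produces a maximum point $(x_*, y_*, t_*)$; the initial bound and the contradiction hypothesis then force $t_* > 0$ once $\alpha, \beta$ are small. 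The coercivity $L(x,y) \ge c_0|x-y|^q$ from Lemma \ref{L:distancefunction}(b) ensures $|x_* - y_*| \to 0$ as $\lambda \to \infty$, so $(x_*, y_*) \in \Delta_{r_0}$ for $\lambda$ large. To absorb the $\alpha(|x|^2 + |y|^2)$ bump, I would construct, via the method of characteristics applied backward from $t_*$, a short-time smooth-in-$(x,y)$ pathwise solution $\Phi_\alpha$ of the doubled equation with $\Phi_\alpha(\cdot,\cdot,t_*) = \alpha(|x|^2+|y|^2)$. Applying the doubled sub-solution property to $u(x,t) - v(y,t) - \Phi_\lambda(x,y) - \Phi_\alpha(x,y,t) - \beta t$ then yields $\beta \le 0$, the desired contradiction.

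The main obstacle will be that $L$ is only $C^1$ on the strip $\Delta_{r_0}$, not globally, and this is precisely why the $q$-coercive penalty is so useful: it concentrates $(x_*, y_*)$ at the diagonal, pulling the analysis into the strip where $L$ is smooth. The semiconcavity bound in Lemma \ref{L:distancefunction}(e) plays an auxiliary role in closing the test-function inequalities uniformly as $\alpha \to 0$. A secondary technicality is justifying the doubled version of Definition \ref{D:solution}: this is standard for separated test functions $\Phi_1(x,t) - \Phi_2(y,t)$ built from single-variable pathwise solutions, and extends to the non-separated test functions $\Phi_\lambda + \Phi_\alpha$ used here via the framework developed in \cite{LS1} and \cite{LS5}.
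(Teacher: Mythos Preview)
Your proposal has a genuine gap at the step ``applying the doubled sub-solution property to $u(x,t)-v(y,t)-\Phi_\lambda(x,y)-\Phi_\alpha(x,y,t)-\beta t$ then yields $\beta\le 0$.'' In the pathwise framework of Definition~\ref{D:solution} (and its doubled analogue), the entire space-dependent test object must be a smooth solution of the equation; you cannot peel off pieces and handle residuals as Lipschitz errors, because those errors would be multiplied by $|dW|$. Since the doubled equation $dz=(H(D_xz,x)-H(-D_yz,y))\cdot dW$ is nonlinear in $Dz$, the sum $\Phi_\lambda+\Phi_\alpha$ of two solutions is \emph{not} a solution, and the inequality $\beta\le 0$ does not follow. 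Replacing this sum by a single characteristics-built solution with data $\lambda L(x,y)+\alpha(|x|^2+|y|^2)$ at $t_*$ is also obstructed: $L$ is only $C^1$ on $\Delta_{r_0}$ (and, by Lemma~\ref{L:distancefunction}\eqref{L:Lsemiconcave}, not $C^2$ across the diagonal when $q<2$), and the doubled Hamiltonian fails to be $C^2$ on $\{p_1=0\}\cup\{p_2=0\}$, so the method of characteristics does not deliver a smooth-in-$(x,y)$ solution. The references \cite{LS1}, \cite{LS5} do not bypass this: non-separated test functions are allowed there, but they must still solve the equation.

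The paper's proof avoids both obstacles by two devices. First, it uses the separable test function
\[
	\Phi_\lambda(x,y,s,t)=\pars{\frac{\lambda}{1+\lambda(W_t-W_s)}}^{q-1}L(x,y),
\]
which, for each fixed $(y,t)$, is a $C^1$-in-$x$ solution of $d\Phi=H(D\Phi,x)\cdot dW$ on $\Delta_{r_0}$ (and dually in $(y,t)$); this lets one invoke Definition~\ref{D:solution} for $u$ and for $v$ \emph{separately}, never needing a doubled test function. Second---and this is the point the paper flags as new even in the classical theory---it \emph{discards the spatial localization $\alpha(|x|^2+|y|^2)$ entirely}: since $u$ is bounded and $\Phi_\lambda(\cdot,y,s,t)\gtrsim\lambda^{q-1}|x-y|^q$, for $\lambda$ large the supremum $\sup_{x}\bigl(u(x,s)-\Phi_\lambda(x,y,s,t_\theta)\bigr)$ is attained at some $\hat x$ with $(\hat x,y)\in\Delta_{r_0}$, so Definition~\ref{D:solution} applies directly and gives monotonicity of this supremum in $s$. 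A time-doubling $|s-t|^2/2\theta$ then yields the contradiction $\mu\le 0$. Thus the paper replaces your $\alpha$-penalty-plus-characteristics step with a monotonicity argument requiring only the $C^1$ regularity of $L$ on the strip.
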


If $q \le 2$, then $H$ is $C^2$, and Perron's method can be used in conjunction with the comparison principle to construct the unique solution of \eqref{E:appeq}, as shown by the author in \cite{Se2}.

The argument below is new even in the classical viscosity theory. We are able to avoid the usual strategy of subtracting penalizations of the form $|x|^2 + |y|^2$ to handle the unboundedness of $\RR^d$.

\newproof{proofofap}{Proof of Proposition \ref{P:comparison}}

\begin{proofofap}
	We argue by contradiction and assume, as in the proof of the classical comparison principle, that there exist $T > 0$, $t_0 \in (0,T]$, sufficiently small $\mu > 0$, and sufficiently large $\lambda > 0$ such that the function
	\[
		t \mapsto \sup_{(x,y) \in \RR^d\times \RR^d} \pars{ u(x,t) - v(y,t) - \lambda^{q-1} L(x,y)} - \mu t
	\]
	achieves it maximum on $[0,T]$ at $t_0$.
	
	For $\lambda > 0$, define
	\[
		\Phi_\lambda(x,y,s,t) := \pars{ \frac{\lambda}{1 + \lambda(W_t - W_s) } }^{q-1} L(x,y).
	\]
	Then Lemma \ref{L:distancefunction}\eqref{L:Lequations} and \eqref{L:Lsmoothstrip} yield that, whenever $(x,y) \in \Delta_{r_0}$ and $\omega_W(|s-t|) < \frac{1}{2\lambda}$,
	\[
		(x,s) \mapsto \Phi_\lambda(x,y,s,t) \quad \text{and} \quad	(y,t) \mapsto - \Phi_\lambda(x,y,s,t)
	\]
	are $C^1$ in respectively $x$ and $y$, and solve \eqref{E:smoothtestfunc}.
		
	Let $M_0 := \max \pars{ \nor{u}{\RR^d \times [0,T]}, \nor{v}{\RR^d \times [0,T]} }$, choose
	\begin{equation}
		\lambda > \frac{3}{2} \pars{ \frac{2M_0}{c_0 r_0^q} }^{\frac{1}{q-1}}, \label{E:biglambda}
	\end{equation}
	and, for $\theta > 0$, consider the auxiliary function
	\begin{equation}
		(s,t) \mapsto \sup_{(x,y) \in \RR^d \times \RR^d} \pars{ u(x,s) - v(y,t) - \Phi_\lambda(x,y,s,t) } - \frac{|s-t|^2}{2\theta} - \mu  \frac{s+t}{2}, \label{E:timeaux}
	\end{equation}
	which attains a maximum at some $(s_\theta,t_\theta) \in S_\lambda := \left\{ (s,t) \in [0,T]^2 : \omega_W(|s-t|) \le (2\lambda)\nv \right\}$. Then $\lim_{\theta \to 0} \frac{|s_\theta - t_\theta|^2}{2\theta} = 0$ and
	\begin{gather*}
		\lim_{\theta \to 0} \sup_{(x,y) \in \RR^d \times \RR^d} \pars{ u(x,s_\theta) - v(y,t_\theta) - \Phi_\lambda(x,y,s_\theta,t_\theta) } - \frac{|s_\theta-t_\theta|^2}{2\theta} - \mu  \frac{s_\theta+t_\theta}{2}\\
		= \max_{t \in [0,T] } \sup_{(x,y) \in \RR^d\times \RR^d} \pars{ u(x,t) - v(y,t) - \lambda^{q-1} L(x,y)} - \mu t\\
		= \sup_{(x,y) \in \RR^d\times \RR^d} \pars{ u(x,t_0) - v(y,t_0) - \lambda^{q-1} L(x,y)} - \mu t_0.
	\end{gather*}
	Therefore, as $\theta \to 0$, $(s_\theta,t_\theta) \to (t_0,t_0)$, and so, for sufficiently small $\theta$, we have $s_\theta > 0$, $t_\theta > 0$, and $\omega_W(|s_\theta - t_\theta|) < \frac{1}{2\lambda}$.
	
	We show that, for each fixed $y \in \RR^d$,
	\[
		s \mapsto \sup_{x \in \RR^d} \pars{ u(x,s) - v(y,t_\theta) - \Phi_\lambda(x, y,s,t_\theta)}
	\]
	is nonincreasing in the interval $[a,b] := \left\{ s \in [0,T] : \omega_W(|s - t_\theta|) < (2\lambda)\nv \right\}$, which yields the same for
	\begin{equation}
		s \mapsto \sup_{(x,y) \in \RR^d \times \RR^d} \pars{ u(x,s) - v(y,t_\theta) - \Phi_\lambda(x,y,s,t_\theta) }. \label{E:snonincreasing}
	\end{equation}
	Arguing by contradiction, we assume that, for some $\hat y \in \RR^d$ and sufficiently small $\alpha > 0$, the map $s \mapsto \sup_{x \in \RR^d} \pars{ u(x,s) - \Phi_\lambda(x,\hat y,s,t_\theta) } - \alpha s$ attains a maximum at some $\hat s \in (a,b]$. Since $\omega_W(|\hat s-t_\theta|) < \frac{1}{2\lambda}$, we have, for all $x \in \RR^d$,
	\[
		c_0 \pars{ \frac{2}{3} \lambda}^{q-1} |x- \hat y|^q \le \Phi_\lambda(x,\hat y,\hat s ,t_\theta).
	\]
	Therefore, for some $\hat x \in \RR^d$, the map $(x,s) \mapsto u(x,s) - \Phi_\lambda(x, \hat y, s, t_\theta) - \alpha s$ attains a maximum at $(\hat x, \hat s)$. 
	
	It follows from \eqref{E:biglambda} and
	\[
		c_0 \pars{ \frac{2}{3} \lambda}^{q-1} |\hat x- \hat y|^q \le u(\hat x, \hat s) - u(\hat y, \hat s) \le 2M_0
	\]
	that $(\hat x, \oline y) \in \Delta_{r_0}$. In view of Lemma \ref{L:distancefunction}\eqref{L:Lsmoothstrip}, we can apply Definition \ref{D:solution} to obtain the contradiction $\alpha \le 0$.
	
	A similar argument for the super-solution $v$ yields that
	\begin{equation}
		t \mapsto \inf_{(x,y) \in \RR^d} \pars{ v(y,t) - u(x,s_\theta) + \Phi_\lambda( x,y,s_\theta,t) } \label{E:tnondecreasing}
	\end{equation}
	is nondecreasing on $[c,d] := \left\{ t \in [0,T] : \omega_W(|s_\theta - t|) < (2\lambda)\nv \right\}$.
	
	We now return to the maximum point $(s_\theta,t_\theta)$ of \eqref{E:timeaux}. The map
	\[
		s \mapsto \sup_{(x,y) \in \RR^d} \pars{ u(x,s) - v(y,t_\theta) - \Phi_\lambda(x,y,s,t_\theta)} - \frac{|s-t_\theta|^2}{2\theta} - \mu \frac{s}{2}
	\]
	attains a maximum at $s_\theta$, and, since $\omega_W(|s_\theta - t_\theta| ) < \frac{1}{2\lambda}$, we have $s_\theta > a$. Because \eqref{E:snonincreasing} is nonincreasing, we have $\frac{\mu}{2} + \frac{s_\theta - t_\theta}{\theta} \le 0$. Similarly, the map
	\[
		t \mapsto \inf_{(x,y) \in \RR^d} \pars{ v(y,t) - u(x,s_\theta) + \Phi_\lambda(x,y,s_\theta,t) } + \frac{|s_\theta - t|^2}{2\theta} + \mu \frac{t}{2}
	\]
	attains a minimum at $t_\theta$ with $t_\theta > c$, so, because \eqref{E:tnondecreasing} is nondecreasing, we have $- \frac{\mu}{2} + \frac{s_\theta - t_\theta}{\theta} \ge 0$. We conclude that $\mu \le 0$, a contradiction, and the result follows.
\end{proofofap}

\subsection{An estimate for the solutions, and the proofs of \eqref{E:extension} and \eqref{E:equicontinuity}}
	We now present an estimate for the solutions of \eqref{E:appeq} with smooth signals. Below, $c_0$ and $C_0$ are as in \eqref{A:Hbounds}.
	
\begin{proposition}\label{P:uvestimate}
	For $u_0, v_0 \in BUC(\RR^d)$ and $\xi, \zeta \in C^1([0,\oo))$ with $\xi_0 = \zeta_0$, let $u$ be a sub-solution of
	\begin{equation*}
			u_t = H(Du, x)\dot \xi_t \quad \text{in } \RR^d \times (0,\oo), \qquad u(\cdot,0) = u_0 \quad \text{on } \RR^d,
	\end{equation*}
	and $v$ a super-solution of
	\begin{equation*}
			v_t = H(Dv, x)\dot \zeta_t \quad \text{in } \RR^d \times (0,\oo), \qquad v(\cdot,0) = v_0 \quad \text{on } \RR^d.
	\end{equation*}
	Then, for all $T > 0$ and $0 < \lambda < \pars{ \max_{0 \le t \le T} \pars{\xi_t - \zeta_t}_+ }\nv$,
	\begin{gather*}
		\sup_{(x,y,t) \in \RR^d \times \RR^d \times [0,T]} \pars{ u(x,t) - v(y,t) - C_0 \pars{ \frac{ \lambda}{1 - \lambda(\xi_t - \zeta_t)} }^{q-1} |x-y|^q} \\
		\le \sup_{(x,y) \in \RR^d \times \RR^d} \pars{ u_0(x) - v_0(y) - c_0 \lambda^{q-1} |x-y|^q}.
	\end{gather*}
\end{proposition}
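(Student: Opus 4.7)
The plan is to follow the structure of Proposition \ref{P:comparison}, with the distance function $L$ from Lemma \ref{L:distancefunction} serving as the penalization test function, scaled by a time-dependent prefactor crafted to absorb the mismatch between the driving signals $\dot\xi$ and $\dot\zeta$. Specifically, define
\[
\mu(s,t) := \frac{\lambda}{1 - \lambda(\xi_s - \zeta_t)} \quad \text{and} \quad \Psi(x,y,s,t) := \mu(s,t)^{q-1} L(x,y),
\]
valid where $\lambda(\xi_s - \zeta_t) < 1$. A direct computation using $\partial_s \mu = \mu^2 \dot\xi_s$, $\partial_t \mu = -\mu^2 \dot\zeta_t$, the eikonal identity $H(D_x L, x) = H(-D_y L, y) = (q-1) L$ from Lemma \ref{L:distancefunction}\eqref{L:Lequations}, and the $q'$-homogeneity of $H$ (which gives $H(\mu^{q-1} p, x) = \mu^q H(p, x)$) shows that on $\Delta_{r_0}$,
\[
\partial_s \Psi = H(D_x \Psi, x) \dot\xi_s \quad \text{and} \quad -\partial_t \Psi = H(-D_y \Psi, y) \dot\zeta_t.
\]
Thus $(x,s) \mapsto \Psi$ is a classical smooth-in-$x$ solution of the $u$-equation and $(y,t) \mapsto -\Psi$ a classical smooth-in-$y$ solution of the $v$-equation, so that using $\Psi$ as a test function will cancel the $H$-contributions to the sub- and super-solution inequalities exactly.

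Since $c_0 |x-y|^q \le L(x,y) \le C_0 |x-y|^q$ and $\mu(0,0) = \lambda$, the claim will follow from the $L$-version
\[
\sup_{\substack{x,y \in \RR^d \\ t \in [0,T]}} \bigl(u(x,t) - v(y,t) - \mu(t,t)^{q-1} L(x,y)\bigr) \le \sup_{x,y \in \RR^d} \bigl(u_0(x) - v_0(y) - \lambda^{q-1} L(x,y)\bigr),
\]
which I would prove by contradiction as in Proposition \ref{P:comparison}. Assume the inequality fails by $\nu > 0$; after adding a small linear-in-$t$ penalty $-\nu t$, the time-sup on the left attains its maximum at some $t_0 > 0$. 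Doubling in time yields the auxiliary function
\[
(x,y,s,t) \mapsto u(x,s) - v(y,t) - \Psi(x,y,s,t) - \tfrac{(s-t)^2}{2\theta} - \tfrac{\nu(s+t)}{2},
\]
whose maximizer $(x_\theta, y_\theta, s_\theta, t_\theta)$ exists by coercivity of $L$ in $|x-y|$ and boundedness of $u$ and $v$, and satisfies $s_\theta, t_\theta \to t_0$ as $\theta \to 0$. Applying Definition \ref{D:solution} to $u$ at $(x_\theta, s_\theta)$ and to $v$ at $(y_\theta, t_\theta)$ with $\Psi$ (plus the smooth time-only corrections) as test function, the built-in cancellations reduce the sub- and super-solution inequalities to $(s_\theta - t_\theta)/\theta + \nu/2 \le 0$ and $(s_\theta - t_\theta)/\theta - \nu/2 \ge 0$; summing gives $\nu \le 0$, a contradiction.

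The main obstacle is the same technical point encountered in Proposition \ref{P:comparison}: to invoke Definition \ref{D:solution} with $\Psi$ as a test function, the maximizer $(x_\theta, y_\theta)$ must lie in the strip $\Delta_{r_0}$ on which $L$ is $C^1$. Since $\lambda$ is prescribed by the statement and may be small, the a priori bound $\mu(s_\theta,t_\theta)^{q-1} L(x_\theta,y_\theta) \lesssim \nor{u}{\oo} + \nor{v}{\oo}$ does not directly force $|x_\theta - y_\theta| \le r_0$. I would resolve this exactly as in Proposition \ref{P:comparison}, by partitioning $[0,T]$ into short intervals on which an auxiliary large parameter $\tilde\lambda$ can be used inside the comparison argument to keep the auxiliary maximizer in $\Delta_{r_0}$; iterating the resulting local comparison telescopes to the global inequality for the prescribed $\lambda$.
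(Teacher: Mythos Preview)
Your setup is sound: the computation that $(x,s)\mapsto\Psi$ and $(y,t)\mapsto-\Psi$ are classical solutions of the two driven equations on $\Delta_{r_0}$ is correct, and once the maximizers are known to lie in $\Delta_{r_0}$, the sub- and super-solution inequalities do collapse to $\nu\le 0$ as you describe.

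The gap is your last paragraph. First, that is \emph{not} how Proposition~\ref{P:comparison} handles the strip constraint: there, $\lambda$ is a free parameter, so one simply takes $\lambda$ large enough (see \eqref{E:biglambda}) to force the spatial maximizer into $\Delta_{r_0}$; no partition of $[0,T]$ is used. Second, and more seriously, the partition-and-telescope idea cannot work here. If on each subinterval $[t_k,t_{k+1}]$ you run the argument with an initial prefactor $\tilde\lambda_k$, the output prefactor at $t_{k+1}$ is $\tilde\lambda_k/(1-\tilde\lambda_k((\xi_{t_{k+1}}-\zeta_{t_{k+1}})-(\xi_{t_k}-\zeta_{t_k})))$, so telescoping forces $\tilde\lambda_k=\mu(t_k,t_k)$, i.e.\ $\tilde\lambda_k$ is completely determined by the prescribed $\lambda$ and cannot be chosen large. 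There is no free auxiliary parameter available.

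The paper resolves this by a genuinely different device: it works with the single-time doubled equation $z_t=H(D_xz,x)\dot\xi_t-H(-D_yz,y)\dot\zeta_t$ (available since $\xi,\zeta\in C^1$), adds a spatial localization $\tfrac{\beta}{2}(|x|^2+|y|^2)$, and then doubles the \emph{spatial} variables once more, introducing $(z,w)$ with a penalty $\tfrac{1}{2\delta}(|x-z|^2+|y-w|^2)$ and moving $\Phi_\lambda$ to the $(z,w)$ slot. The point is that at the \emph{minimum} in $(z,w)$ of $\Phi_\lambda(z,w,t_\delta)+\tfrac{1}{2\delta}(\cdots)+\tfrac{\beta}{2}(\cdots)$, the semiconcavity of $L$ from Lemma~\ref{L:distancefunction}\eqref{L:Lsemiconcave} guarantees differentiability of $L$ at $(z_\delta,w_\delta)$ even when this point is outside $\Delta_{r_0}$. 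This is the key idea you are missing: semiconcavity substitutes for $C^1$-regularity when the test function is touched from below, and it is precisely what allows the argument to go through for the prescribed (possibly small) $\lambda$.
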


We first use Proposition \ref{P:uvestimate} to prove \eqref{E:extension} and \eqref{E:equicontinuity}.

\newproof{proofofae}{Proof of \eqref{E:extension}}

\begin{proofofae}
	Assume first that $W^1, W^2 \in C^1([0,\oo),\RR)$. Proposition \ref{P:uvestimate} gives, for all $0 < \lambda < \nor{W^1 - W^2}{[0,t]}\nv$ and $x \in \RR^d$,
	\[
		|u^1(x,t) - u^2(x,t)| \le \nor{u_0^1 - u_0^2}{\oo} + \theta(\omega_{u_0^1,u_0^2},\lambda).
	\]
	Letting $\lambda \to \nor{W^1 - W^2}{[0,t]}\nv$ gives the result for smooth $W^1$ and $W^2$. A standard density argument yields the existence for solutions of \eqref{E:appeq}, and shows that \eqref{E:extension} holds for all continuous $W^1$ and $W^2$.
	\end{proofofae}
	
\newproof{proofofaee}{Proof of \eqref{E:equicontinuity}}

\begin{proofofaee}
	In view of \eqref{E:extension}, we may assume $W \in C^1([0,\oo), \RR)$. We apply Proposition \ref{P:uvestimate} with $u_0^1 = u_0^2 = u_0$ and $\xi = \zeta = W$, and find, for any $\lambda > 0$, $x,y \in \RR^d$, and $t \ge 0$,
	\begin{align*}
		u(x,t) &- u(y,t) \le C_0\lambda^{q-1} |x-y|^q + \sup_{(x,y) \in \RR^d \times \RR^d} \pars{ u_0(x) - u_0(y) - c_0 \lambda^{q-1} |x-y|^q}\\
		&\le C_0 \lambda^{q-1} |x-y|^q + \sup_{(x,y) \in \RR^d} \pars{ \omega_{u_0}(|x-y|) - c_0 \lambda^{q-1} |x-y|^q} \le C_0 \lambda^{q-1} |x-y|^q + \theta(\omega_{u_0}, \lambda).
	\end{align*}
	Taking the infimum over $\lambda$ finishes the proof.
\end{proofofaee}

\newproof{proofofapp}{Proof of Proposition \ref{P:uvestimate}}

\begin{proofofapp}
	Classical viscosity solution arguments show that $z(x,y,t) := u(x,t) - v(y,t)$ is a sub-solution of
	\begin{equation}
		z_t = H(D_x z,x)\dot \xi_t - H(-D_y z, y) \dot \zeta_t \quad \text{in } \RR^d \times \RR^d \times (0,\oo). \label{E:mixeddoubled}
	\end{equation}
	
	For $0 < \lambda < \pars{ \max_{0 \le t \le T} (\xi_t - \zeta_t)_+}\nv$, define
	\[
		\Phi_\lambda(x,y,t) := \pars{ \frac{\lambda}{1 - \lambda(\xi_t - \zeta_t)} }^{q-1} L(x,y).
	\]
	It is immediate that $\Phi_\lambda(x,y,0) = \lambda^{q-1} L(x,y)$, and, in view of Lemma \ref{L:distancefunction}\eqref{L:Lequations}, $\Phi$ solves \eqref{E:mixeddoubled} whenever $L$ is differentiable at $(x,y)$.
	
	Next, for $0 < \beta < 1$ and $\mu > 0$, define
	\[
		\Psi(x,y,t) := u(x,t) - v(y,t) - \Phi_\lambda(x,y,t) - \frac{\beta}{2}(|x|^2 + |y|^2) - \mu t.
	\]
	The comparison principle yields that $u$ and $v$ are bounded, and, therefore, $\Psi$ attains a maximum on $\RR^d \times \RR^d \times [0,T]$ at some $(\hat x, \hat y, \hat t)$ that depends on $\beta$ and $\mu$. Assume for the sake of contradiction that $\hat t > 0$. 
	
	Rearranging terms in the inequality $\Psi(0,0,\hat t) \le \Psi(\hat x, \hat y, \hat t)$ gives
	\[
		\frac{\beta}{2}(|\hat x|^2 + |\hat y|^2) \le u(\hat x, \hat t) - v(\hat y, \hat t) - (u(\hat 0, \hat t) - v(\hat 0,\hat t)) \le 2 (\nor{u}{\RR^d \times [0,T]} + \nor{v}{\RR^d \times [0,T]}).
	\]
	The inequality $\Psi(\hat y, \hat y, \hat t) \le \Psi(\hat x, \hat y, \hat t)$ and Lemma \ref{L:distancefunction}\eqref{L:Lcoercive} yield
	\[
		c_0 \pars{ \frac{\lambda}{1 - \lambda(\xi_{\hat t} - \zeta_{\hat t}) } }^{q-1} |\hat x - \hat y|^q \le u(\hat x, \hat t) - u(\hat y, \hat t) + \frac{\beta}{2} (|\hat y|^2 - |\hat x|^2).
	\]
	It follows that, for some $R >0$ depending on $\lambda$, $u$, $v$, $\xi$, and $\zeta$, $\pars{ |\hat x|^2 + |\hat y|^2 }^{1/2} \le R \beta^{-1/2}$ and $(\hat x, \hat y) \in \Delta_R$. 
	
	The definition of $\Phi_\lambda$ requires $\lambda(\xi_t - \zeta_t) < 1$. Therefore, since there is no restriction on the size of $\xi - \zeta$, we cannot expect $\lambda$ to be large enough to force $(\hat x, \hat y) \in \Delta_{r_0}$. Instead, we double variables once more and take advantage of the semiconcavity of $L$. 
	
	For $0 < \delta < 1$, set
	\begin{gather*}
		\Psi_\delta(x,y,z,w,t) := u(x,t) - v(y,t) -\frac{1}{2\delta}(|x-z|^2 + |y-w|^2) \\
		- \Phi_\lambda(z,w,t) - \frac{\beta}{2}(|z|^2 + |w|^2) - \mu t - \frac{1}{2}\pars{ |x - \hat x|^2 + |y - \hat y|^2 + |t - \hat t|^2}
	\end{gather*}
	and $\Omega_{R,\beta} := \Delta_R \cap B_{R \beta^{-1/2}} \subset \RR^d \times \RR^d$, and assume that the maximum of $\Psi_\delta$ on $\Omega_{R,\beta} \times \Omega_{R,\beta} \times [0,T]$ is attained at $(x_\delta,y_\delta,z_\delta,w_\delta,t_\delta)$.
		
	Lemma \ref{L:distancefunction}\eqref{L:LLipschitz} gives $|D_z \Phi_\lambda| + |D_w \Phi_\lambda| + \beta (|z| + |w|) \lesssim_R 1$ on $ \Omega_{R,\beta} \times \Omega_{R,\beta} \times [0,T]$. Rearranging terms in the inequality $\Psi_\delta(x_\delta, y_\delta, x_\delta, y_\delta, t_\delta) \le \Psi_\delta(x_\delta,y_\delta,z_\delta,w_\delta,t_\delta)$ yields
	\begin{align*}
		\frac{1}{2\delta} \pars{ |x_\delta - z_\delta|^2 + |y_\delta - w_\delta|^2}
		&\le \Phi_\lambda(x_\delta, y_\delta, t_\delta) - \Phi_\lambda(z_\delta,w_\delta,t_\delta) + \frac{\beta}{2}(|x_\delta|^2 + |y_\delta|^2 - |z_\delta|^2 - |w_\delta|^2) \\
		&\lesssim_R \pars{ |x_\delta - z_\delta| + |y_\delta - w_\delta|},
	\end{align*}
	and, hence, $|x_\delta - z_\delta| + |y_\delta - w_\delta| \lesssim_R \delta$.
	
	Since $(\hat x, \hat y, \hat x, \hat y, \hat t) \in \Omega_{R,\beta} \times \Omega_{R,\beta} \times [0,T]$ and $\Psi_\delta(\hat x, \hat y, \hat x, \hat y, \hat t) = \Psi(\hat x, \hat y, \hat t)$,
	\begin{align*}
		\Psi(\hat x, \hat y, \hat t)
		&= u(\hat x, \hat t) - v(\hat y, \hat t) - \Phi_\lambda(\hat x, \hat y, \hat t) - \frac{\beta}{2} (|\hat x|^2 + |\hat y|^2) - \mu \hat t\\
		&\le u(x_\delta, t_\delta) - v(y_\delta, t_\delta) - \frac12 |x_\delta - \hat x|^2 - \frac12 |y_\delta - \hat y|^2 
		-\frac{1}{2\delta}(|x_\delta - z_\delta|^2 + |y_\delta - w_\delta|^2)\\
		& - \Phi_\lambda(z_\delta, w_\delta, t_\delta) 
		- \frac{\beta}{2} (|z_\delta|^2 + |w_\delta|^2) - \mu t_\delta - \frac12 |t_\delta - \hat t|^2 \\ 
		&\le \Psi(x_\delta, y_\delta, t_\delta) + \Phi_\lambda(x_\delta,y_\delta,t_\delta) - \Phi_\lambda(z_\delta,w_\delta,t_\delta) +\frac{\beta}{2} (|x_\delta|^2 + |y_\delta|^2 - |z_\delta|^2 - |w_\delta|^2) \\
		&- \frac 12 \pars{ |x_\delta - \hat x|^2 - |y_\delta - \hat y|^2 - |t_\delta - \hat t|^2 }.
	\end{align*}
	Rearranging terms and using $\Psi(x_\delta,y_\delta,t_\delta) \le \Psi(\hat x, \hat y, \hat t)$, we see that
	\[
		|x_\delta - \hat x|^2 + |y_\delta - \hat y|^2 + |t_\delta - \hat t|^2 \lesssim_R \delta.
	\]
	Therefore, for sufficiently small $\delta$, $(x_\delta,y_\delta,z_\delta,w_\delta, t_\delta)$ is a local interior maximum point of $\Psi_\delta$ in $\Omega_{R,\beta} \times \Omega_{R,\beta} \times (0,T]$.
				
	Since
	\begin{align*}
		(x,y,t) &\mapsto u(x,t) - v(y,t) - \frac{1}{2\delta} \pars{ |x- z_\delta|^2 + |y-w_\delta|^2}\\
		& - \Phi_\lambda(z_\delta,w_\delta,t) - \mu t - \frac{1}{2} \pars{ |x-\hat x|^2 - |y - \hat y|^2 - |t - \hat t|^2}
	\end{align*}
	attains an interior maximum at $(x_\delta,y_\delta,t_\delta)$, the definition of viscosity solution for the doubled equation \eqref{E:mixeddoubled} yields
	\begin{align*}
		\mu + t_\delta - \hat t + \Phi_{\lambda,t}(z_\delta,w_\delta,t_\delta)
		\le H\pars{ \frac{x_\delta - z_\delta}{\delta} + x_\delta - \hat x, x_\delta} \dot \xi_{t_\delta}
		- H\pars{ - \frac{y_\delta - w_\delta}{\delta} - (y_\delta - \hat y), y_\delta} \dot \zeta_{t_\delta}.
	\end{align*}
	
	Next, $(z_\delta,w_\delta)$ is a minimum point of
	\begin{align*}
		(z,w) \mapsto \Phi_\lambda(z,w,t_\delta) + \frac{1}{2\delta} (|x_\delta - z|^2 + |y_\delta - w|^2) + \frac{\beta}{2} (|z|^2 + |w|^2).
	\end{align*}
	In view of Lemma \ref{L:distancefunction}\eqref{L:Lsemiconcave}, $\Phi_\lambda$ is differentiable at $(z_\delta,w_\delta)$, and so
	\[
		D_x \Phi_\lambda(z_\delta,w_\delta,t_\delta) = \frac{x_\delta - z_\delta}{\delta} - \beta z_\delta, \quad
		D_y \Phi_\lambda(z_\delta,w_\delta,t_\delta) = \frac{y_\delta - w_\delta}{\delta} - \beta w_\delta,
	\]
	and
	\[
		\Phi_{\lambda,t}(z_\delta,w_\delta,t_\delta)
		= H(D_x \Phi_\lambda(z_\delta,w_\delta,t_\delta), z_\delta) \dot \xi_{t_\delta} - H(-D_y \Phi_\lambda(z_\delta,w_\delta,t_\delta), w_\delta) \dot \zeta_{t_\delta}.
	\]
	
	It follows that
	\begin{align*}
		\mu + t_\delta - \hat t + \Phi_{\lambda,t}(z_\delta,w_\delta,t_\delta)
		&\le H\pars{ D_x \Phi_\lambda(z_\delta,w_\delta,t_\delta) + \beta z_\delta + x_\delta - \hat x, x_\delta} \dot \xi_{t_\delta}\\
		&- H\pars{ - D_y \Phi_\lambda(z_\delta, w_\delta,t_\delta) - \beta w_\delta - (y_\delta - \hat y), y_\delta} \dot \zeta_{t_\delta}.
	\end{align*}
	The bounds for $(\hat x, \hat y, \hat t)$ and $(x_\delta,y_\delta,z_\delta,w_\delta,t_\delta)$ and \eqref{A:Hbounds} yield
	\[
		\mu \lesssim_R (\beta^{1/2} + \delta^{1/2} + \delta)\pars{\nor{\dot \xi}{[0,T]} + \nor{\dot \zeta}{[0,T]}}.
	\]
	We obtain a contradiction for sufficiently small enough $\delta$ and $\beta$.
	
	Therefore, for all $\mu >0$ and $t \in [0,T]$,
	\begin{gather*}
		\lim_{\beta \to 0} \sup_{(x,y) \in \RR^d \times \RR^d} \pars{ u(x,t) - v(y,t) - \Phi_\lambda(x,y,t) - \frac{\beta}{2} (|x|^2 + |y|^2)}\\
		= \sup_{(x,y) \in \RR^d \times \RR^d} \pars{ u(x,t) - v(y,t) - \Phi_\lambda(x,y,t)}
		\le \sup_{(x,y) \in \RR^d \times \RR^d} \pars{ u_0(x) - v_0(y) - \lambda^{q-1}L(x,y)} + \mu t.
	\end{gather*}
	Letting $\mu \to 0$ and applying Lemma \ref{L:distancefunction}\eqref{L:Lcoercive} finishes the proof.
	\end{proofofapp}

\section*{Acknowledgements}
	I would like to thank my advisor, Professor Panagiotis E. Souganidis, for suggesting the topic of this paper, and Professor Pierre-Louis Lions for some helpful remarks.

	I am partially supported by the NSF grants NSF-DMS-1266383, NSF-DMS-1600129, and RTG-DMS-1246999.

\end{document}